\begin{document}

\title[Lower bounds on the number of rational points of Jacobians]{Lower bounds on the number of 
rational points of Jacobians over finite fields and application to algebraic function fields in towers}
\author{S. Ballet}
\address{Aix-Marseille Universit{\'e}, Institut de Math\'{e}matiques
de Luminy\\ case 930, F13288 Marseille cedex 9\\ France}
\email{stephane.ballet@univ-amu.fr}

\author{R. Rolland}
\address{Aix-Marseille Universit{\'e}, Institut de Math\'{e}matiques
de Luminy\\ case 930, F13288 Marseille cedex 9\\ France}
\email{robert.rolland@acrypta.fr}

\author{S. Tutdere}
\address{Orta Do\u gu Teknik  \"Universitesi,  Uygulamal\i\; Matematik Enstit\"us\"u \\ 
 Kriptoloji Laboratuvar\i\;, \"Universiteler Mah. Dumlup\i nar Bul. No:1, 06800 \c Cankaya/Ankara\\ Turkey }    
\email{stutdere@gmail.com}

\date{\today}
\keywords{finite field, Jacobian, algebraic function field, class number, tower}
\subjclass[2010]{Primary 14H05; Secondary 12E20}

\newtheorem{theoreme}{Theorem}[section]
\newtheorem{lemme}[theoreme]{Lemma}
\newtheorem{proposition}[theoreme]{Proposition}
\newtheorem{corollaire}[theoreme]{Corollary}
\theoremstyle{definition}
\newtheorem{conclusion}[theoreme]{Conclusion}
\newtheorem{definition}[theoreme]{Definition}
\newtheorem{remarque}[theoreme]{Remark}
\newtheorem{exemple}[theoreme]{Example}
\renewcommand{\theequation}{\arabic{equation}}
\setcounter{equation}{0}

\begin{abstract}
We give effective bounds for the class number of any algebraic function 
field of genus $g$ defined over a finite field. These bounds depend on the possibly partial information 
on the number of places on each degree $\leq g$. Such bounds are especially useful for estimating
the class number of function fields in towers of function fields over finite
fields. We give examples in the case of asymptotically good towers.
In particular we estimate the class number of function fields which are steps of 
towers having one or several positive Tsfasman-Vladut invariants. Note that the study is not done 
asymptotically, but for each individual step of the towers for which we determine precise 
parameters .
\end{abstract}

\maketitle

\section{Introduction}
\label{sec:intro}

\subsection{General context}\label{subsec:gc}

We recall that the class number $h(F/{\mathbb{F}}_q)$ of an algebraic function field $F/{\mathbb{F}}_q$  
defined over a finite field ${\mathbb{F}}_q$ is the cardinality of the Picard group of $F/{\mathbb{F}}_q$.
 This numerical invariant corresponds to the number of ${\mathbb{F}}_q$-rational 
points of the Jacobian of any curve $X({\mathbb{F}}_q)$  
having $F/{\mathbb{F}}_q$ as algebraic function field. 
Estimating the class number of an algebraic function field is a classical problem. 
By the standard estimates deduced from the results of Weil \cite{weil1} \cite{weil2}, we know that 
$$(\sqrt{q}-1)^{2g}\leq h(F/{\mathbb{F}}_q) \leq (\sqrt{q}+1)^{2g},$$
where $g$ is the genus of $F/{\mathbb{F}}_q$. Moreover, these estimates  hold 
for any Abelian variety. The value of $h(F/{\mathbb{F}}_q)$ is completely determined by the zeta function
of the function field. But generally, we have not enough information to compute the zeta function effectively.
Finding good estimates for the class number  $h(F/{\mathbb{F}}_q)$ from a partial
information on the number of places of degrees $\leq g-1$ is a difficult problem.
For $g=1$, namely for elliptic curves, the class number  is the number of 
${\mathbb{F}}_q$-rational points 
of the curve and this case has been extensively studied. 
So, from now on we assume that $g\geq 2$. 
In \cite{lamd}, Lachaud and Martin-Deschamps proved that $h(F/{\mathbb{F}}_q \geq h_{LMD}$
where
\begin{equation}\label{HLMD}
h_{LMD}= q^{g-1} \frac{(q-1)^2}{(q+1)(g+1)}.
\end{equation}
This estimate do no use any information on the number of places.
Recently in \cite{baro6}, the two first authors gave the following results: 
\begin{theoreme}\label{boundJTNB} 
Let $F/{\mathbb{F}}_q$ be an algebraic function field defined over ${\mathbb{F}}_q$ of genus $g\geq 2$ and 
$h$ the class number of $F/{\mathbb{F}}_q$. Suppose that the numbers $B_1$ and $B_r$ of places of 
degree respectively $1$ and $r$
are such that $B_1\geq 1$ and $B_r\geq 1$. Let us denote by 
$K_1(i,B)$ and $K_2(q,j,B)$ 
the following numbers:
$$K_1(i,B)= 
\left (
\begin{array}{c}
 B+i\\
 B
\end{array}
\right )
\quad \hbox{and}\quad
K_2(q,j,B)= \sum_{i=0}^{j}\frac{1}{q^{i}}
\left (
\begin{array}{c}
B+i-1\\
 B-1
\end{array}
\right )
.$$
Then, the following inequality holds:
$$h \geq h_{BR}$$
where
\begin{equation}\label{minh}
\begin{split}
h_{BR}= 
\frac{(q-1)^2 }{(g+1)(q+1)-B_1} \left [ K_2(q,r-1,B_1) \, q^{g-1} 
\sum_{m=0}^{\left \lfloor \frac{g-2}{r} \right \rfloor-1} \frac{1}{(q^{r})^m}\left (
\begin{array}{c}
B_r+m-1\\
 B_r-1
\end{array}
\right ) \right .\\
+q \, K_2\left(\strut q,(g-2)\mod r,B_1\right) \left (
\begin{array}{c}
 B_r+\left \lfloor \frac{g-2}{r}  \right\rfloor-1\\
 B_r-1
\end{array}
\right )\\
+K_1(r-1,B_1)
\left (
\begin{array}{c}
 B_r+\left \lfloor \frac{g-1}{r} \right \rfloor-1\\
 B_r
\end{array}
\right )\\ 
\left .
+K_1\left(\strut (g-2)\mod r ,B_1\right )
\left (
\begin{array}{c}
 B_r+\left \lfloor \frac{g-1}{r} \right \rfloor-1\\
 B_r-1
\end{array}
\right ) \right ]
\end{split}
\end{equation}
\end{theoreme}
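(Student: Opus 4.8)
The plan is to reduce the bound to an upper estimate for the derivative $P'(1)$ of the $L$-polynomial $P$ of $F/{\mathbb{F}}_q$, and then to feed in the elementary lower bounds $A_n\ge E_n$ for the numbers $A_n$ of effective divisors of degree $n$, where $E_n$ counts only those effective divisors supported on places of degree $1$ and $r$.

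Write $Z(t)=\prod_P(1-t^{\deg P})^{-1}=P(t)/((1-t)(1-qt))$ with $P(t)=\prod_{j=1}^{2g}(1-\omega_j t)\in{\mathbb{Z}}[t]$, $|\omega_j|=\sqrt q$, $P(1)=h$, and $A_n=[t^n]Z(t)$. Two standard facts are used throughout: the Riemann--Roch relation $A_n=q^{\,n+1-g}A_{2g-2-n}+h\,(q^{\,n+1-g}-1)/(q-1)$, and the inequality $A_n\ge E_n$ with $E_n=\sum_{a+rb=n}\binom{B_1+a-1}{a}\binom{B_r+b-1}{b}$, which just says that every multiset of places of degrees $1$ and $r$ of total degree $n$ is an effective divisor. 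The first step is the closed formula
\[
P'(1)=(q-1)\Bigl(\sum_{n=0}^{g-1}q^{\,g-1-n}A_n+\sum_{n=0}^{g-2}A_n\Bigr)+g\,h .
\]
To obtain it, observe $\sum_{n\le N}A_n=[t^N]\,P(t)/((1-t)^2(1-qt))$; the partial fraction expansion of this rational function has a double pole at $t=1$ whose contribution involves $P'(1)$, so extracting the coefficient of $t^{2g-2}$ (the polynomial part has strictly smaller degree) expresses $\sum_{n=0}^{2g-2}A_n$ through $P'(1)$, $h$, $q$ and $g$; the Riemann--Roch relation then rewrites the ``upper half'' $\sum_{g\le n\le 2g-2}A_n$ in terms of $A_0,\dots,A_{g-1}$ and $h$, and the formula follows after simplification.

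The heart of the argument is the inequality
\[
(q-1)\,P'(1)\ \le\ h\,\bigl(q(2g+1)+1-B_1\bigr).
\]
Since no $\omega_j$ equals $1$, one has $P'(1)=-h\sum_j\omega_j/(1-\omega_j)$; together with $\sum_j\omega_j=q+1-B_1$ this makes the claim equivalent to $\sum_{j}\omega_j(\omega_j-q)/(1-\omega_j)\le 2gq$. Because $|\omega_j|^2=q$, the inverse roots come in conjugate pairs $\{\omega,\bar\omega\}$ (a real $\omega$ is forced to equal $\pm\sqrt q$), and the contribution of such a pair equals $-2\operatorname{Re} u$ where $u=\omega(q-\omega)/(1-\omega)$. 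The key observation is that $u\bar u=q\,|q-\omega|^2/|1-\omega|^2=q^2$, because $|q-\omega|^2=q\,|1-\omega|^2$ when $|\omega|^2=q$; hence $|u|=q$ and $\operatorname{Re} u\ge-q$, so each of the $g$ pairs contributes at most $2q$ and the bound follows (the values $\omega=\pm\sqrt q$, where $u=-q$, meet it with equality). Substituting the closed formula into this inequality and simplifying gives
\[
h\,\bigl((q+1)(g+1)-B_1\bigr)\ \ge\ (q-1)^2\Bigl(\sum_{n=0}^{g-1}q^{\,g-1-n}A_n+\sum_{n=0}^{g-2}A_n\Bigr).
\]

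It remains to replace each $A_n$, $0\le n\le g-1$, by $E_n$ and to evaluate the right-hand side. Decomposing $E_n$ according to the number $b$ of degree-$r$ places, interchanging the two summations, and using the hockey-stick identity $\sum_{a=0}^{i}\binom{B_1+a-1}{a}=\binom{B_1+i}{i}$ (which produces the quantities $K_1$) together with the definition of $K_2$ (which collects the $q$-weighted sums over the degree-$1$ part) turns $\sum_{n=0}^{g-1}q^{\,g-1-n}E_n+\sum_{n=0}^{g-2}E_n$ into a single sum over $b$; splitting its range at $\lfloor(g-2)/r\rfloor$ and $\lfloor(g-1)/r\rfloor$ and discarding a few positive powers of $q$ in order to reach a clean closed form produces precisely the bracket defining $h_{BR}$. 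As only non-negative quantities were discarded, this is a genuine lower bound, so $h\ge h_{BR}$. I expect the pairing step for $P'(1)$ --- spotting the substitution $u=\omega(q-\omega)/(1-\omega)$ that collapses the estimate to $|u|=q$ --- to be the conceptually delicate point, and the combinatorial reorganisation in the final step to be the longest but essentially routine part.
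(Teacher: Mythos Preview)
Your argument is correct, but note first that the paper does not itself prove this theorem: it is quoted from \cite{baro6,baro5}, and the paper's Section~\ref{preli} only records the underlying machinery. So the relevant comparison is with that machinery, namely the Lachaud--Martin-Deschamps identity $S(F/\mathbb{F}_q)=h\,R(F/\mathbb{F}_q)$ (formula~\eqref{mainformula}) together with the elementary bound $R\le\bigl((g+1)(q+1)-B_1\bigr)/(q-1)^2$ (inequality~\eqref{R2}), followed by the combinatorial lower bound on $S$ obtained by counting effective divisors supported only on places of degree $1$ and $r$.

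Your route via $P'(1)$ reaches exactly the same intermediate inequality $h\bigl((g+1)(q+1)-B_1\bigr)\ge(q-1)^2 S$, just packaged differently. Your identity $P'(1)=(q-1)S+gh$ is equivalent to $S=hR$ once one pairs conjugate roots and uses $|1-\pi_i|^2=q+1-(\pi_i+\bar\pi_i)$; and your bound on $P'(1)$ via the substitution $u=\omega(q-\omega)/(1-\omega)$ with $|u|=q$ is, after unwinding, precisely the statement $(\pi_i+\bar\pi_i)^2\le 4q$ that drives inequality~\eqref{R2}. So you have found a pleasant alternative presentation---the $|u|=q$ trick is neat---but not a genuinely different mechanism. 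The paper's formulation has the advantage of making the lower bound~\eqref{R3} on $R$ (hence an \emph{upper} bound on $h$) fall out symmetrically; your $P'(1)$ formulation obscures this.

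For the combinatorial half, your plan (swap the order of summation over $a$ and $b$, apply hockey-stick for the degree-one part, then split the range of $b$) is exactly what is done in \cite{baro5}, and your remark that one must discard some positive powers of $q$ to land on the stated closed form is accurate: for instance the ``last'' term in the $q$-weighted sum carries $q^{1+((g-2)\bmod r)}$, which the stated $h_{BR}$ replaces by $q$.
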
 

\begin{theoreme}
Let $F/{\mathbb{F}}_q$ be a function field  of genus $g\geq 2$ defined over a finite field  ${\mathbb{F}}_q$  and 
$h$ be its the class number. Suppose that the numbers $B_1$ and $B_r$ of places of 
degree respectively $1$ and $r$
are such that $B_1>0$ and $B_r>0$ for any $r\in \mathbb{N}$. Let $m_{r}(n)$ be  the quotient of the Euclidean 
division of the integer $n$ 
by the integer $r$.
Then
$$h \geq  \frac{(q-1)^2}{(g+1)(q+1)}q^{g-1}h_r$$
where $h_r$ is defined as follows:

\begin{enumerate}
\item For any $B_r$, let us set 
$$f_r=
\left\{
\begin{array}{lcl}
0 & \hbox{ if } & \frac{g-2}{2}<r\leq g-2\\
1 & \hbox{ if } & r\leq\frac{g-2}{2} \hbox{ and } B_r<q^r\\
\min\left(\lfloor \frac{B_r-q^r}{q^r-1} \rfloor+1,m_r(g-2)-1\right) & \hbox{ if } 
& r\leq \frac{g-2}{2} \hbox{ and } B_r\geq q^r.
\end{array}
\right.
$$
then  $$h_r= \left(\frac{q^{rm_r(g-2)}-1}{q^{r(m_r(g-2)-1)}(q^r-1)}+
\frac{(B_r-1)}{q^r}f_r\right).$$
 \item If $B_r\leq m_r(g-2)$ then
$$h_r=\left(\frac{q^r}{q^r-1} \right)^{B_r-1}.$$
 \item If $B_r > m_r(g-2)$ and $r\leq \frac{g-2}{2}$ then
$$h_r=\left(1+\frac{B_r}{q^r(m_r(g-2)-1)} \right)^{m_r(g-2)-1}.$$
 \item If $B_r+1 \leq (m_r(g-2)-1)(q^r-1)$ then
$$h_r =\left[\left(\frac{q^r}{q^r-1} \right)^{B_r} - B_r \left (\begin{array}{c}
B_r+m_r(g-2)-1\\
 B_r
\end{array}
\right )\left(\frac{1}{q^r}\right)^{m_r(g-2)}\right].$$
\end{enumerate}
\end{theoreme}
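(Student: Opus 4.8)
The plan is to obtain all four inequalities from Theorem \ref{boundJTNB} by retaining only its leading term. Since $B_1\geq 1$ one has $(g+1)(q+1)-B_1\leq(g+1)(q+1)$, and since all four summands in the bracket of \eqref{minh} are nonnegative while $K_2(q,r-1,B_1)\geq K_2(q,0,B_1)=1$, Theorem \ref{boundJTNB} at once gives
$$h\;\geq\;\frac{(q-1)^2}{(g+1)(q+1)}\,q^{g-1}\,S,\qquad S:=\sum_{m=0}^{m_r(g-2)-1}\frac{1}{q^{rm}}\binom{B_r+m-1}{B_r-1}.$$
Writing $M:=m_r(g-2)$ and $x:=q^{-r}\in(0,1)$, it thus suffices to prove, in each of the four parameter ranges, that $h_r\leq S=\sum_{m=0}^{M-1}\binom{B_r+m-1}{m}x^m$, a truncation of the negative binomial series $\sum_{m\geq 0}\binom{B_r+m-1}{m}x^m=(1-x)^{-B_r}=\bigl(q^r/(q^r-1)\bigr)^{B_r}$.

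For item (1) I would split $S=\sum_{m=0}^{M-1}x^m+\sum_{m=1}^{M-1}\bigl(\binom{B_r+m-1}{m}-1\bigr)x^m$. The first sum is the geometric series $\frac{q^{rM}-1}{q^{r(M-1)}(q^r-1)}$, and the second has nonnegative terms whose $m=1$ term is exactly $(B_r-1)x=\frac{B_r-1}{q^r}$ (this is the value $f_r=1$, used when $B_r<q^r$; note $f_r=0$ is forced when $M=1$, i.e. $r>\frac{g-2}{2}$). When moreover $B_r\geq q^r$ I would use that the ratio of a term $\binom{B_r+m-1}{m}x^m$ to the preceding one equals $x(B_r+m-1)/m$, which is $\geq 1$ precisely for $m\leq(B_r-1)/(q^r-1)$, so that $\binom{B_r+m-1}{m}x^m\geq B_rx$ and hence the $m$-th summand is $\geq(B_r-1)x$ for every $m\leq\min\bigl(M-1,\lfloor(B_r-1)/(q^r-1)\rfloor\bigr)$; since $\lfloor(B_r-1)/(q^r-1)\rfloor=\lfloor(B_r-q^r)/(q^r-1)\rfloor+1$, summing these terms gives exactly $\frac{B_r-1}{q^r}f_r$. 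For item (3), where $B_r>M$ and $M\geq 2$, I would argue termwise: each factor of $\prod_{j=0}^{m-1}\frac{(B_r+j)(M-1)}{B_r(M-1-j)}$ is $\geq 1$, whence $\binom{B_r+m-1}{m}x^m\geq\binom{M-1}{m}\bigl(B_rx/(M-1)\bigr)^m$ for $0\leq m\leq M-1$, and summing by the binomial theorem gives $S\geq\bigl(1+\tfrac{B_r}{q^r(M-1)}\bigr)^{M-1}$.

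For item (4) I would write $S=(1-x)^{-B_r}-T$ with tail $T=\sum_{m\geq M}\binom{B_r+m-1}{m}x^m$; factoring out $x^M\binom{B_r+M-1}{M-1}$ and bounding, for $m=M+\ell$, the quotient $\binom{B_r+M+\ell-1}{M+\ell}/\binom{B_r+M-1}{M-1}=\frac{B_r}{M+\ell}\prod_{j=M}^{M+\ell-1}(1+\tfrac{B_r}{j})$ by $\frac{B_r}{M}(1+\tfrac{B_r}{M})^\ell$, the geometric series $\sum_\ell\bigl((1+B_r/M)/q^r\bigr)^\ell$ converges exactly because the hypothesis $B_r+1\leq(M-1)(q^r-1)$ forces $(1+B_r/M)/q^r<1$, and the very same inequality makes its sum at most $1$, so $T\leq B_r\binom{B_r+M-1}{B_r}x^M$, which is the claimed bound. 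Item (2), with $B_r\leq M$, is the one I expect to be the main obstacle: the natural route is a direct comparison of $S$ with the full series $(1-x)^{-(B_r-1)}=\sum_{m\geq 0}\binom{B_r+m-2}{m}x^m$, carried out by induction on $B_r$ via the partial-summation identity $(1-x)\sum_{m=0}^{N}\binom{B_r+m-1}{m}x^m=\sum_{m=0}^{N}\binom{B_r+m-2}{m}x^m-\binom{B_r+N-1}{N}x^{N+1}$ (a consequence of Pascal's rule), using $B_r\leq M$ to keep the truncation error dominated; the base case $B_r=1$ is again the geometric series, and the bound is sharp — an equality when $q^r=2$, $B_r=M$, reflecting $\sum_{k=0}^{M-1}\binom{M-1+k}{k}2^{-k}=2^{M-1}$ — so the real work is the careful bookkeeping needed to match the truncation error against the precise threshold ($B_r$ versus $q^r$, $M$, or $(M-1)(q^r-1)$) in each window.
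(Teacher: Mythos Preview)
The present paper does not prove this theorem: it is quoted in Section~\ref{subsec:gc} as a result of \cite{baro6}, so there is no proof here to compare against. That said, your reduction is the natural one --- both theorems come from bounding $S(F/\mathbb{F}_q)=\Sigma_1+\Sigma_2$ below and $R(F/\mathbb{F}_q)$ above --- and deriving the second from the first by retaining only the leading summand of \eqref{minh} and using $K_2(q,r-1,B_1)\geq 1$, $(g+1)(q+1)-B_1\leq(g+1)(q+1)$ is entirely correct. Your treatment of items (1) and (3) is clean and complete. In item (4) there is a slip of exposition: the geometric series $\sum_{\ell\geq 0}\bigl((1+B_r/M)q^{-r}\bigr)^\ell$ is never ${}\leq 1$; what the hypothesis $B_r+1\leq(M-1)(q^r-1)$ gives is that this sum is at most $M$ (equivalently $\tfrac{1}{M}\cdot\tfrac{1}{1-(1+B_r/M)q^{-r}}\leq 1$), and combined with the prefactor $B_r/M$ this yields exactly the bound $T\leq B_r\binom{B_r+M-1}{B_r}q^{-rM}$ you state. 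With that correction the argument for (4) is fine.

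Item (2) is a genuine gap, as you yourself flag. The partial-summation identity you wrote is correct, but the induction it suggests does not close: from $(1-x)S_B^{(M-1)}=S_{B-1}^{(M-1)}-\binom{B+M-2}{M-1}x^{M}$ and the inductive hypothesis $S_{B-1}^{(M-1)}\geq(1-x)^{2-B}$ you only get $(1-x)S_B^{(M-1)}\geq(1-x)^{2-B}-\binom{B+M-2}{M-1}x^{M}$, with the error term on the wrong side. Equivalently, what must be shown is the tail bound
\[
\sum_{m\geq M}\binom{B_r+m-1}{m}q^{-rm}\;\leq\;\frac{q^{-r}}{(1-q^{-r})^{B_r}}\qquad(B_r\leq M,\;q^r\geq 2),
\]
and this is \emph{tight} at $q^r=2$, $B_r=M$ (both sides equal $2^{M-1}$), so no slack from the induction can absorb the error. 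A direct argument for this tail inequality --- e.g.\ via the probabilistic interpretation $\Pr[\mathrm{Bin}(M+B_r-1,\,1-q^{-r})\leq B_r-1]\leq q^{-r}$, or by a careful term-ratio estimate exploiting $B_r\leq M$ and $q^{-r}\leq\tfrac12$ --- is what is missing, and without it item (2) is not established.
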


Note that the different cases for the value $h_r$ are not mutually exclusive, no case is better 
than the others in absolute terms.
In fact, the optimal choice of $h_r$  depends on the parameters (curve, genus, places of 
degree $r$ etc.). In particular, 
the assertion (a) is always valid but the estimation can be improved by the other 
assertions depending on the values 
of parameters.

However, in all the cases, the above result significantly  improves the lower bounds of 
Lachaud - Martin-Deschamps in \cite{lamd}. 
Moreover, they obtain the following asymptotic result \cite{baro4} 
which reaches some asymptotics for the Jacobian  obtained by 
Tsfasman \cite[Corollary 2]{tsfa} and Tsfasman-Vladut \cite{tsvl} 
(cf.also \cite{tsvlno}):

\begin{theoreme} \label{NewboundhAsymp}
Let ${\mathcal F}/{\mathbb{F}}_q=(F_k/{\mathbb{F}}_q)_{k\geq 0}$ be a sequence of function 
fields over a finite field
${\mathbb{F}}_q$ and ${\mathcal G}/{\mathbb{F}}_{q^r}=(G_k/{\mathbb{F}}_{q^r})_{k\geq 0}$ 
with $G_k=F_k{\mathbb F}_{q^r}$ the constant field extension of $F_k$ for some $r\geq1$. 
Let $g_k$  be the genus, $h(F_k/{\mathbb{F}}_q)$  be the class number, and 
$B_i(F_k/{\mathbb{F}}_q)$ be the number of places of degree $i$  of $F_k/{\mathbb{F}}_q$ for any $i,k\geq0$.   

Let further $\alpha$ be a positive real number. Suppose  that $B_1(F_k/{\mathbb{F}}_q)\geq 1$ 
for some $k\geq 0$ and there exists an $r \geq 1$ such that  
\begin{enumerate}
\item 
$\liminf\limits_{k\rightarrow\infty} \frac{B_r(F_k/{\mathbb{F}}_q)}{g_k} >\alpha$ or
\item
$\frac{1}{r}\cdot \liminf\limits_{k\rightarrow\infty} \sum_{i\mid r}\frac{iB_i(F_k/{\mathbb{F}}_{q})}{g_k} >\alpha.$
\end{enumerate}
Then
$$h(F_k/{\mathbb{F}}_q) > C \left({\left(\frac{q^r}{q^{r}-1}\right)^{\alpha}q}\right)^{g_k}$$
where $C>0$ is a constant with respect to $k$. 
\end{theoreme}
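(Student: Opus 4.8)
The plan is to obtain the statement by passing to the limit $k\to\infty$ in the effective lower bounds already available, namely Theorem \ref{boundJTNB} for hypothesis (1) and a mild variant of it (using places of every degree dividing $r$) for hypothesis (2). Since the conclusion has the shape $h(F_k/\mathbb{F}_q)>C\gamma^{g_k}$ with $\gamma=\left(\tfrac{q^r}{q^r-1}\right)^{\alpha}q>1$, it is enough to prove the constant-free, sharper statement
\[
\liminf_{k\to\infty}\ \frac{\log h(F_k/\mathbb{F}_q)}{g_k}\ >\ \log q+\alpha\log\frac{q^r}{q^r-1};
\]
the finitely many early steps, and the degenerate case in which $g_k$ stays bounded along a subsequence — where $h(F_k/\mathbb{F}_q)\ge 1$ while $\gamma^{g_k}$ is bounded — are then absorbed into a small $C>0$. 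I record two preliminary remarks. First, by the Drinfeld--Vladut--Serre inequalities, hypothesis (1) or (2) can only be met if $\limsup_k B_r(F_k/\mathbb{F}_q)/g_k\le\tfrac{q^{r/2}-1}{r}$ (more generally $\sum_{d\mid r}\tfrac{d}{q^{d/2}-1}\limsup_k B_d(F_k/\mathbb{F}_q)/g_k\le 1$), so $\alpha<\tfrac{q^r-1}{r}$ and, in the setting of hypothesis (2), the weighted sum of the $B_d/g_k$ stays well below the truncation level used below; this is what keeps the statement non-vacuous precisely where the effective bounds are effective. Second, hypothesis (2) folds into hypothesis-(1)-type data via the elementary inequality
\[
\log\frac{q^d}{q^d-1}\ \ge\ \frac dr\log\frac{q^r}{q^r-1}\qquad (d\mid r),
\]
equivalently $(q^r-1)^d\ge(q^d-1)^r$: writing $r=ds$ and $a=q^d\ge 2$ this reads $a^s-1\ge(a-1)^s$, which is clear from $a^s-1=(a-1)\sum_{i=0}^{s-1}a^{i}\ge(a-1)\sum_{i=0}^{s-1}(a-1)^{i}\ge(a-1)(a-1)^{s-1}$.

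Under hypothesis (1), apply Theorem \ref{boundJTNB} to $F_k/\mathbb{F}_q$ for $k$ large enough that $B_1:=B_1(F_k/\mathbb{F}_q)\ge 1$ and $B_r:=B_r(F_k/\mathbb{F}_q)\ge 1$ (the latter holds since $B_r>\alpha g_k\to\infty$). All four summands making up $h_{BR}$ are non-negative, so dropping the last three,
\[
h(F_k/\mathbb{F}_q)\ \ge\ \frac{(q-1)^2\,K_2(q,r-1,B_1)}{(g_k+1)(q+1)-B_1}\ q^{g_k-1}\sum_{m=0}^{\lfloor(g_k-2)/r\rfloor-1}\frac{1}{q^{rm}}\binom{B_r+m-1}{B_r-1}.
\]
In this truncated series the terms increase, then decrease, the ratio of consecutive ones being $q^{-r}\tfrac{B_r+m}{m+1}$, so the maximum is near $m^{\ast}=\lfloor\tfrac{B_r-q^r}{q^r-1}\rfloor$; since $B_r/g_k$ stays below $\tfrac{q^r-1}{r}$, one has $m^{\ast}<\lfloor(g_k-2)/r\rfloor-1$ once $k$ is large, so the series is at least its $m^{\ast}$-th term. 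Keeping that term, using $K_2\ge 1$ with $\log K_2=O(\log g_k)$ and $(g_k+1)(q+1)-B_1=O(g_k)$, and applying Stirling to $\binom{B_r+m^{\ast}-1}{B_r-1}$, the exponents collapse — the $-rm^{\ast}\log q$ and the binomial entropy combine to $B_r\log\tfrac{q^r}{q^r-1}$ — giving
\[
\frac{\log h(F_k/\mathbb{F}_q)}{g_k}\ \ge\ \log q+\frac{B_r(F_k/\mathbb{F}_q)}{g_k}\,\log\frac{q^r}{q^r-1}-o(1).
\]
Taking $\liminf$ and using hypothesis (1) gives the displayed inequality.

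Under hypothesis (2), rerun the proof of Theorem \ref{boundJTNB} from \cite{baro6} essentially verbatim, with the sole change that effective divisors of degree $\le g_k-1$ are now counted using places of every degree $d\mid r$ at once, so that $\prod_{d\mid r}(1-t^{d})^{-B_d(F_k/\mathbb{F}_q)}$ replaces $(1-t)^{-B_1}(1-t^{r})^{-B_r}$ throughout; discarding non-negative terms and then peeling off, for each $d\mid r$, the peak coefficient of the degree-$d$ factor (which by the Drinfeld--Vladut--Serre inequalities sits inside the truncation range) and applying Stirling yields
\[
\frac{\log h(F_k/\mathbb{F}_q)}{g_k}\ \ge\ \log q+\sum_{d\mid r}\frac{B_d(F_k/\mathbb{F}_q)}{g_k}\,\log\frac{q^d}{q^d-1}-o(1).
\]
By the folding inequality the right-hand side is $\ge\log q+\frac1r\big(\sum_{d\mid r}\tfrac{dB_d(F_k/\mathbb{F}_q)}{g_k}\big)\log\tfrac{q^r}{q^r-1}-o(1)$, and passing to $\liminf$ and using hypothesis (2) finishes the argument. (When $\alpha\ge 2\log(1+q^{-1/2})/\log\tfrac{q^r}{q^r-1}$ one may instead pass to the constant field extension $G_k=F_k\mathbb{F}_{q^r}$, of genus $g_k$ with $B_1(G_k/\mathbb{F}_{q^r})=\sum_{d\mid r}dB_d(F_k/\mathbb{F}_q)$, apply the hypothesis-(1) argument over $\mathbb{F}_{q^r}$, and descend through $h(F_k/\mathbb{F}_q)=h(G_k/\mathbb{F}_{q^r})\big/\prod_{j=1}^{r-1}L_{F_k}(\zeta^{j})$ with $|L_{F_k}(\zeta^{j})|\le(1+\sqrt q)^{2g_k}$, $\zeta$ a primitive $r$-th root of unity and $L_{F_k}$ the $L$-polynomial of $F_k/\mathbb{F}_q$; but this descent is too lossy for smaller $\alpha$, so the direct count is needed in general.)

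The step I expect to be the real obstacle is the asymptotic analysis of the effective bound: isolating the dominant term and estimating it by Stirling so that it grows like $q^{g_k}\left(\tfrac{q^r}{q^r-1}\right)^{B_r(F_k/\mathbb{F}_q)}$ up to a factor polynomial in $g_k$. This is precisely where one needs the Drinfeld--Vladut--Serre control on the $B_d/g_k$, so that the peak index of the truncated series lies inside the range that is actually summed; without it the truncation genuinely costs. For hypothesis (2) there is, in addition, the routine but not wholly cost-free point of checking that the proof of Theorem \ref{boundJTNB} is insensitive to enlarging the admissible degree set from $\{1,r\}$ to $\{d:d\mid r\}$. By comparison the folding inequality $(q^r-1)^d\ge(q^d-1)^r$ and the closing passages to the $\liminf$ are immediate.
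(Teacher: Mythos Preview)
The paper does not prove Theorem \ref{NewboundhAsymp}: it is quoted in Section \ref{subsec:gc} as a prior result of the first two authors \cite{baro4}, with no argument given here, so there is no proof in this paper to compare against.

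Your strategy is nonetheless the natural one and is essentially how such asymptotic corollaries are extracted from effective bounds: retain only the dominant $\Sigma_2$-contribution $q^{g_k-1}\sum_m q^{-rm}\binom{B_r+m-1}{B_r-1}$, recognise the untruncated series as $(q^r/(q^r-1))^{B_r}$, and check via the Drinfel'd--Vl\u{a}du\c{t} bound that the truncation at $\lfloor(g_k-2)/r\rfloor$ is asymptotically harmless because the peak index $m^\ast\approx B_r/(q^r-1)$ satisfies $B_r/g_k\le (q^{r/2}-1)/r+o(1)<(q^r-1)/r$. The Stirling step and the folding inequality $(q^r-1)^d\ge(q^d-1)^r$ you use to reduce hypothesis (2) to the same shape are both correct. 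Note, incidentally, that the multi-degree variant of Theorem \ref{boundJTNB} you need under hypothesis (2) is precisely what the present paper establishes in Theorems \ref{MainTheo}--\ref{mainmain}, so from this paper's standpoint that part of your argument could simply cite those results rather than rerun \cite{baro6}.

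One small mismatch worth flagging: the hypothesis reads $B_1(F_k/\mathbb{F}_q)\ge 1$ for \emph{some} $k$, not for all large $k$, so you cannot literally invoke Theorem \ref{boundJTNB} (which assumes $B_1\ge 1$) at every step. This is almost certainly an imprecision in the quoted statement rather than a flaw in your argument; in any case the $B_1$-dependent factors contribute only $O(\log g_k)$ to $\log h$, and the $B_1$-free version of the bound (option (c) in Theorem \ref{MainTheo}, with $D_2=\{r\}$) already yields the asymptotic you need.
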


In particular, it happens when they specialize their study
to some families of curves having asymptotically a large number of places of degree $r$ for some value of $r$, 
namely when $\liminf\limits_{g\rightarrow \infty} {B_r(g)}/{g}>0$. 
Recently, in \cite{hesttu} the third authors {\it et al.} explicitly constructed towers 
${\mathcal F}/{\mathbb F}_q=(F_k/{\mathbb F}_q)_{k\geq 0}$ of curves having asymptotically 
a large number of places of degree $r_i$ for several distinct values of $r_i$, namely with 
several positive Tsfasman-Vladut
invariants:  
$$\beta_{r_i}({\mathcal F})=\lim\limits_{k\rightarrow\infty} {B_{r_i}(F_k/{\mathbb{F}}_q)}/{g_k}>0.$$ 

Consequently, we are interested by estimating the class number of these new towers of algebraic function fields 
defined over ${\mathbb F}_q$, which is the main motivation of this paper. Note that the existence of such 
towers  had already been mentioned 
by Lebacque {\it et al.} \cite{leba} but it follows from the class field theory which does not give 
explicit constructions. Moreover, Lebacque \cite[Theorem 7]{leba} obtains an explicit version 
of the Generalized Brauer-Siegel Theorem  which is valid in the case 
of smooth absolutely irreducible Abelian varieties defined over a finite field and for 
the number fields under the Generalized Riemann Hypothesis(GRH). 
Specialized to the case of smooth absolutely irreducible curves  over finite fields, 
this theorem leads to the following 
result:

\begin{theoreme}\label{theohleba}\label{hleba}
For any smooth absolutely irreducible curve $X$ of genus $g$ defined over the finite 
field $\mathbb{F}_r$, one has, as $N \rightarrow \infty$:

$$\sum_{m=1}^{N} {\mathbb P}hi_{r^m}\log\left (\frac{r^{m}}{r^{m}-1}\right ) =\log N+ \gamma+ \log(\aleph_X\log r)+ 
\mathcal{O}(\frac{1}{N})+ g\mathcal{O}(\frac{r^{-N/2}}{N})$$
where ${\mathbb P}hi_{r^m}=\# \{ \mathfrak{p}\in \mid X \mid \mid \deg( \mathfrak{p})=m\} $, $\mid X \mid$ 
denotes the set of closed 
points of $X$ and $\aleph_X$ denotes the residue at $s=1$ of the zeta function $\zeta_X$ of $X$. 
Moreover, the $\mathcal{O}$ constants are effective and do not depend on $X$.

\end{theoreme}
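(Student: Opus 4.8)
The plan is to derive this as a consequence of the explicit version of the Generalized Brauer–Siegel Theorem, exactly as stated by Lebacque in \cite[Theorem 7]{leba}, by specializing the general statement — valid for smooth absolutely irreducible Abelian varieties over a finite field — to the case where the variety is the curve $X$ itself. First I would recall the precise form of Lebacque's theorem: it gives an asymptotic expansion, as $N\to\infty$, for a truncated sum over places weighted by logarithmic factors, with main term $\log N$, a constant term expressed through the Euler–Mascheroni constant $\gamma$ and the residue of the relevant zeta function, and effective error terms of size $\mathcal{O}(1/N)$ plus a genus-dependent term of size $g\,\mathcal{O}(r^{-N/2}/N)$. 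The key point is that for a curve $X/\mathbb{F}_r$ the zeta function $\zeta_X$ is the Hasse–Weil zeta function, whose residue at $s=1$ is $\aleph_X$, and the "places" appearing in the general Brauer–Siegel sum specialize to the closed points $\mathfrak{p}\in|X|$ with $\Phi_{r^m}$ counting those of degree $m$.

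Next I would carry out the translation of notation: the general theorem is typically phrased in terms of $\mathbb{F}_{r^m}$-rational points (or the logarithmic derivative of $\zeta_X$ at $r^{-s}$, expanded as a Dirichlet series), and one must check that the coefficient of the $m$-th term, after the change of variable $s\mapsto r^{-s}$ and the manipulation $-\log(1-r^{-ms}) = \sum_k r^{-mks}/k$, produces exactly the weight $\log\!\left(\frac{r^m}{r^m-1}\right)$ at $s=1$. This is the computation $\sum_{k\geq 1} \frac{r^{-mk}}{k} = -\log(1-r^{-m}) = \log\!\left(\frac{r^m}{r^m-1}\right)$, which identifies the left-hand side of the displayed formula with the truncation at $N$ of $-\log\zeta_X$ evaluated appropriately; the tail beyond $N$ is what contributes the error terms. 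I would then invoke the explicit Brauer–Siegel estimate to control this tail: the analytic continuation and functional equation of $\zeta_X$, together with the location of its zeros on the critical line $|u|=r^{-1/2}$ (Weil's Riemann Hypothesis for curves), give the $g\,\mathcal{O}(r^{-N/2}/N)$ term, while the pole at $s=1$ with residue $\aleph_X$ contributes the $\log N + \gamma + \log(\aleph_X\log r)$ part and the $\mathcal{O}(1/N)$ correction.

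The main obstacle, and the step requiring the most care, is verifying that Lebacque's general Abelian-variety statement applies verbatim to a curve rather than only to its Jacobian: one must confirm that the hypotheses of \cite[Theorem 7]{leba} are met by $\zeta_X$ (correct order of the pole at $s=1$, correct functional equation, RH), and that the "effective and $X$-independent" $\mathcal{O}$-constants claimed in the general theorem survive the specialization — i.e., that the implied constants depend only on $r$ and not on the genus $g$ except through the explicitly displayed factor $g$. Since the excerpt grants us Lebacque's result, the argument reduces to this bookkeeping: invoke \cite[Theorem 7]{leba} for $\mathrm{Jac}(X)$ or directly for $X$, rewrite the weighted sum via the Dirichlet-series identity above, and read off the stated expansion. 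No genuinely new estimate is needed; the content is the careful identification of terms and the observation that the error constants are uniform in $X$.
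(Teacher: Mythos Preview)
Your plan is exactly what the paper does: it does not prove this theorem at all but simply quotes it as the specialization of Lebacque's explicit Brauer--Siegel theorem \cite[Theorem 7]{leba} to the case of a smooth absolutely irreducible curve over $\mathbb{F}_r$. Your outline of the translation (identifying the weight $\log\!\left(\frac{r^m}{r^m-1}\right)$ via the Dirichlet series and tracking the residue and error terms) is more detailed than anything the paper supplies, but the underlying approach---invoke Lebacque and read off the curve case---is identical.
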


Passing to the limit in the previous result gives the asymptotics of Tsfasman-Vladut \cite{tsvl} \cite{tsvl3}. 
Note also that as 
the constants are effective, this result could lead to effective non-asymptotic lower bounds of the class number $h$. 
However, it is clearly not obvious to obtain these effective bounds. 
On the other hand, very recently Aubry, Haloui, and Lachaud \cite[Proposition 2.1 and 
Theorem 2.4]{auhala1} (cf. also \cite{auhala}), 
in the general context
of Abelian Varieties, deduced new lower bounds on the 
number of points of Jacobians.
in particular, they proved the following:

\begin{theoreme}\label{AHL}
Let $C$ be a curve of genus $g\geq 2$ over ${\mathbb F}_q$ with $N$ rational points. Let $J_C({\mathbb F}_q)$
be the Jacobian of $C$ over ${\mathbb F}_q$. Then the following inequalities hold:
\begin{enumerate}
\item[(I)] \label{I} $$\mid J_C({\mathbb F}_q) \mid \geq M(q)^g \left(  q+1+ \frac{N-(q+1)}{g} \right)^g$$
with $M(q)=\frac{1}{S(h(q))}$ where $S(h)=\frac{h^{1/h-1}}{e\log h^{1/h-1}}$ (and $S(1)=1$) 
denotes the Specht's ratio and $h(q)= \left(  \begin{array}{c}
 q^{1/2}+1\\
q^{1/2}-1
\end{array}
\right )^2$.
\item [(II)]\label{II}
$$\mid J_C({\mathbb F}_q) \mid \geq \frac{q-1}{q^g-1}
\left [ \left(  \begin{array}{c}
 N+2g-2\\
2g-1
\end{array}
\right )  + \sum_{i=2}^{2g-1}B_i\left(  \begin{array}{c}
 N+2g-2-i\\
2g-1-i
\end{array}
\right )\right ]$$ 
\item[(III)]\label{III}
If $N\geq g(q^{\frac{1}{2}}-1)+1$ then
$$\mid J_C({\mathbb F}_q) \mid \geq \left(  \begin{array}{c}
 N+g-1\\
g
\end{array}
\right ) - q\left(  \begin{array}{c}
 N+g-3\\
g-2
\end{array}
\right )$$
\item[(IV)] \label{IV}
$$
\mid J_C({\mathbb F}_q) \mid \geq  \frac{(q-1)^2}{(g+1)(q+1)-N}   
\left [ \left(  \begin{array}{c}
 N+g-2\\
g-2
\end{array}
\right ) + \sum_{i=0}^{g-1}q^{g-1-i}\left(  \begin{array}{c}
 N+i-1\\
i
\end{array}
\right ) \right ]$$
\end{enumerate}
\end{theoreme}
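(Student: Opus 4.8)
The plan is to deduce all four inequalities from the zeta function of $C$, and more precisely from the sequence $(A_n)_{n\ge 0}$, where $A_n$ is the number of effective divisors of degree $n$ on $C$, together with $h:=|J_C(\mathbb F_q)|$. I would use three classical inputs. First, grouping the Frobenius eigenvalues into conjugate pairs $\omega_i,\bar\omega_i$ with $\omega_i\bar\omega_i=q$ and setting $x_i=\omega_i+\bar\omega_i\in[-2\sqrt q,2\sqrt q]$, one has $h=\prod_{i=1}^{g}(q+1-x_i)$ with $\sum_{i=1}^{g}x_i=q+1-N$. Second, the Euler product $Z_C(t)=\sum_{n\ge 0}A_n t^n=\prod_{d\ge 1}(1-t^d)^{-B_d}$, which exhibits $A_n$ as a sum of nonnegative contributions indexed by the effective divisors of degree $n$; keeping only a sub-family of divisors therefore yields a valid lower bound for $A_n$, for instance $A_n\ge\binom{N+n-1}{n}$ by keeping the divisors supported on rational points. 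Third, Riemann--Roch and Serre duality, which give $A_{2g-1}=h\,(q^g-1)/(q-1)$ (every class of degree $2g-1$ is nonspecial) and, via the bijection $D\mapsto K-D$ between degree-$g$ and degree-$(g-2)$ classes together with $\ell(D)=1+\ell(K-D)$ in degree $g$, the exact identity $h=A_g-qA_{g-2}$ (valid for $g\ge 2$).

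For (I) I would use only the first input: the $g$ numbers $q+1-x_i$ all lie in $[(\sqrt q-1)^2,(\sqrt q+1)^2]$, whose ratio of endpoints is $h(q)$, and their arithmetic mean is exactly $q+1+\frac{N-(q+1)}{g}$. Specht's inequality, which bounds the quotient of the arithmetic and geometric means of positive numbers of bounded ratio, then gives $h^{1/g}\ge \frac{1}{S(h(q))}\bigl(q+1+\frac{N-(q+1)}{g}\bigr)=M(q)\bigl(q+1+\frac{N-(q+1)}{g}\bigr)$, and raising to the power $g$ is (I). For (II) I would start from $h=\frac{q-1}{q^g-1}A_{2g-1}$ and lower-bound $A_{2g-1}$ via the Euler product, keeping exactly those effective divisors of degree $2g-1$ that are a sum of rational points, optionally plus a single place of some degree $i$ with $2\le i\le 2g-1$ occurring with multiplicity one. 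These families are pairwise disjoint (they are distinguished by their non-rational part) and contribute respectively $\binom{N+2g-2}{2g-1}$ and $B_i\binom{N+2g-2-i}{2g-1-i}$, which is precisely the bracket in (II).

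For (III) the heart is the identity $h=A_g-qA_{g-2}$. I would then factor $Z_C(t)=(1-t)^{-N}W(t)$ with $W(t)=\prod_{d\ge 2}(1-t^d)^{-B_d}$, a power series with nonnegative coefficients $W_k$ and $W_0=1$; writing $c_k=\binom{N+k-1}{k}$ for the coefficients of $(1-t)^{-N}$ and $[t^n]$ for extraction of a coefficient, one gets $h=A_g-qA_{g-2}=[t^g]\bigl((1-qt^2)(1-t)^{-N}W(t)\bigr)=\sum_{m=0}^{g}(c_m-qc_{m-2})W_{g-m}$, whose $m=g$ summand equals $\binom{N+g-1}{g}-q\binom{N+g-3}{g-2}$. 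Thus (III) follows once I show that every other summand is nonnegative, i.e. $c_m-qc_{m-2}\ge 0$ for $0\le m\le g$ (the cases $m=0,1$ are trivial, being $1$ and $N$). For $2\le m\le g$ the ratio $c_m/c_{m-2}=\frac{(N+m-1)(N+m-2)}{m(m-1)}$ is non-increasing in $m$ for $N\ge 1$, so it suffices to treat $m=g$, where the required inequality $(N+g-1)(N+g-2)\ge q\,g(g-1)$ reduces, via $N\ge g(\sqrt q-1)+1$, to $q\ge 1$ after a short computation. Finally, (IV) is exactly the bound $h_{BR}$ of Theorem~\ref{boundJTNB} specialized to $r=1$ (hence $B_r=B_1=N$): with $r=1$ one has $\lfloor\frac{g-2}{1}\rfloor=g-2$, $(g-2)\bmod 1=0$, the quantities $K_1$ and $K_2$ collapse to $1$, and the isolated term $q\binom{N+g-3}{g-2}$ becomes the last term of the geometric sum; regrouping produces the right-hand side of (IV) verbatim. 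One only has to record that the common denominator $(g+1)(q+1)-N$ is positive, which follows from the Weil--Serre bound $N\le q+1+g\lfloor 2\sqrt q\rfloor$ together with $\lfloor 2\sqrt q\rfloor\le q$.

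Assuming Theorem~\ref{boundJTNB} is available for (IV), the step I expect to be the main obstacle is the precise handling of (III): first pinning down the identity $h=A_g-qA_{g-2}$ in exactly the shape needed, and then recognizing that the hypothesis $N\ge g(\sqrt q-1)+1$ is \emph{exactly} the threshold making every cross-term $c_m-qc_{m-2}$ nonnegative. The monotonicity of $c_m/c_{m-2}$ in $m$ and the reduction of the critical case $m=g$ to the trivial inequality $q\ge 1$ are the delicate points; everything else is either classical or a routine expansion. A secondary point of care is verifying that the Euler-product lower bounds used in (II), and in the factor $W(t)$ in (III), really record disjoint families of divisors, so that nothing is double-counted.
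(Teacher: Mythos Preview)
The paper does not prove Theorem~\ref{AHL}; it is quoted as a result of Aubry, Haloui and Lachaud \cite{auhala1,auhala} (see the sentence preceding the statement in Section~\ref{subsec:gc}). So there is no proof in the paper to compare your proposal against. The only comment the paper makes is the one you reproduce for (IV): that Bound (IV) is exactly Theorem~\ref{boundJTNB} specialized to $r=1$.

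That said, your outline is essentially the argument of \cite{auhala1}. A few remarks on correctness. Your derivation of the identity $h=A_g-qA_{g-2}$ from Riemann--Roch is right, and the factorization $Z_C(t)=(1-t)^{-N}W(t)$ with $W_k\ge 0$ is the clean way to isolate the $m=g$ term; the monotonicity of $c_m/c_{m-2}=\frac{(N+m-1)(N+m-2)}{m(m-1)}$ in $m$ indeed reduces to $N\ge 1$, which is implied by the hypothesis $N\ge g(\sqrt q-1)+1$ since $g\ge 2$ and $q\ge 2$, and at $m=g$ the inequality $(N+g-1)(N+g-2)\ge q\,g(g-1)$ follows from $N+g-1\ge g\sqrt q$ and $N+g-2\ge g\sqrt q-1$, giving $g^2q-g\sqrt q\ge g^2q-gq$, i.e.\ $q\ge\sqrt q$. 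For (II), the identity $A_{2g-1}=h\,(q^g-1)/(q-1)$ and the disjointness of your families (distinguished by their non-rational support, either empty or a single higher-degree place with multiplicity one) are both fine. For (I), Specht's ratio bound applied to the factors $q+1-x_i\in[(\sqrt q-1)^2,(\sqrt q+1)^2]$ with fixed arithmetic mean $q+1+\frac{N-(q+1)}{g}$ is exactly what is used in \cite{auhala1}. Your reduction of (IV) to Theorem~\ref{boundJTNB} with $r=1$ matches the paper's own remark verbatim.
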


Note that Bound (IV) is exactly the particular case of 
\cite[Theorem 3.1]{baro5} (cf. also Theorem \ref{boundJTNB}) 
obtained with $r=1$.

Let us denote by $h_{AHL}$ the bound (III) which is the most accurate of these bounds with the exception
of bound (IV) already known as previously noted (bound $h_{BR}$): 
\begin{equation}\label{HAHL}
 h_{AHL}=\left(  \begin{array}{c}
 N+g-1\\
g
\end{array}
\right ) - q\left(  \begin{array}{c}
 N+g-3\\
g-2
\end{array}
\right ).
\end{equation}
Note that, Bound (II) gives no significant result
mainly because of the factor $q^g$ in denominator. 

\subsection{New results}\label{subsec:nr}

We remark that Theorem \ref{boundJTNB} in the non-asymptotic case and Theorem \ref{NewboundhAsymp} 
in the asymptotic
case are particularly suitable for  the curves having many points of a given degree $r$. Here, 
we are interested in studying curves 
having  for several fixed distinct degrees a lot of points.
In this paper we give bounds on the class number of an algebraic function field
of one variable defined over the finite field ${\mathbb F}_q$ in the non-asymptotic case, 
namely when the function field is fixed. More precisely, these bounds which are effective bounds 
depend on possibly partial information on the genus $g$ and the number of places 
of each degree $\leq g$.  
In this context, we generalize Theorem \ref{boundJTNB} to obtain more precise bounds 
taking into account  the number of places of several distinct degrees $r_1, r_2,...,r_u$. 
In particular, we obtain:

\begin{theoreme}\label{mainparticular}
Let $F/{\mathbb{F}}_q$ be an algebraic function field defined over 
${\mathbb{F}}_q$ of genus $g\geq 2$ and $h$ its class number. 
For any $r$ such that $1 \leq r \leq g-1$,
let $B_r$ be the number of places of degree $r$ of $F/{\mathbb{F}}_q$.

For any set $D_1$, any set $D_2$,
any finite set of integers $l=\{l_r\}_{r\in D_1 }$ and any finite set of integers
$m=\{m_r\}_{r\in D_2}$
such that
\begin{enumerate}
 \item $D_1 \subseteq \{1,\cdots,g-1\}$;
 \item for any $r \in D_1$ we have $B_r \geq 1$;
 \item $D_2 \subseteq \{1,\cdots,g-2\}$;
 \item for any $r \in D_2$ we have $B_r \geq 1$;
 \item $l_r \geq 0, \hbox{  } \sum_{r\in D_1} rl_r \leq g-1$;
 \item $m_r \geq 0, \hbox{  } \sum_{r\in D_2} rm_r \leq g-2$.
\end{enumerate}
Then $h \geq h_{BRT}$ where
\begin{equation*}
\begin{split}
h_{BRT}= 
\frac{(q-1)^2}{(g+1)(q+1)-B_1} 
\left(
\prod_{r\in D_1} \left(\begin{array}{c}
 B_r+l_r\\
l_r
\end{array}\right) \right .
 \\
\left . +q^{g-1}
\prod_{r\in D_2}\left [ \left( \frac{q^r}{q^r-1}\right)^{B_r}
-B_r\left(
\begin{array}{c}
 B_r+m_r\\
 B_r
\end{array}
\right) \int_{0}^{\frac{1}{q^r}} \frac{(\frac{1}{q^r}-t)^{m_r}}{(1-t)^{B_r+m_r+1}}\rm dt. \right ]
\right).
\end{split}
\end{equation*}
\end{theoreme}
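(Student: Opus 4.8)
The plan is to follow the same strategy that underlies Theorem~\ref{boundJTNB}, but to exploit the places of all the degrees $r\in D_1\cup D_2$ simultaneously rather than a single degree $r$. Recall that the class number admits the representation
\[
h=L_F(1)=\sum_{n=0}^{2g-2}\bigl(A_n-q^{g-1-n}A_{2g-2-n}\bigr)+\cdots,
\]
but the cleaner starting point here is the classical lower bound
\[
h\geq \frac{(q-1)^2}{(g+1)(q+1)-B_1}\Bigl(A_{g-1}+\sum_{n=0}^{g-2}q^{g-1-n}A_n\Bigr),
\]
where $A_n=A_n(F/\mathbb F_q)$ is the number of effective divisors of degree $n$. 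This inequality is exactly the one isolated in \cite{lamd} and reused in \cite{baro5, baro6}; I would quote it (or re-derive it quickly from the functional equation and the nonnegativity of the relevant coefficients, together with $B_1\le g+1$-type constraints) as the backbone. Everything then reduces to bounding $A_n$ from below by counting only \emph{special} effective divisors built from the places whose degrees lie in $D_1$ (for the $A_{g-1}$ term) and in $D_2$ (for the sum $\sum q^{g-1-n}A_n$).

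The key combinatorial step is: if $S$ is any set of places and we only use places from $S$, the generating function $\sum_n A_n^{(S)}T^n$ factors as $\prod_{P\in S}(1-T^{\deg P})^{-1}$, so grouping by degree,
\[
\sum_{n\ge 0} A_n^{(S)}T^n=\prod_{r}\Bigl(\frac{1}{(1-T^r)^{B_r^{(S)}}}\Bigr),
\]
and each factor $(1-T^r)^{-B_r}=\sum_{k\ge0}\binom{B_r+k-1}{B_r-1}T^{rk}$. For the $A_{g-1}$ contribution I would choose, for each $r\in D_1$, to place exactly $l_r$ "blocks" of degree $r$ (contributing $\binom{B_r+l_r-1}{B_r-1}$ — or $\binom{B_r+l_r}{l_r}$ after reindexing to allow the empty choice, matching the statement), which is legitimate precisely because $\sum_{r\in D_1}rl_r\le g-1$: the leftover degree can always be filled, and dropping the count of those fillings only weakens the bound, giving $A_{g-1}\ge \prod_{r\in D_1}\binom{B_r+l_r}{l_r}$. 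For the $q^{g-1}\prod_{r\in D_2}[\cdots]$ term I would bound $\sum_{n=0}^{g-2}q^{g-1-n}A_n=q^{g-1}\sum_{n=0}^{g-2}q^{-n}A_n$ from below by restricting to divisors supported on $D_2$-places, so that
\[
\sum_{n=0}^{g-2}q^{-n}A_n^{(D_2)}\ \ge\ \prod_{r\in D_2}\Bigl(\text{truncation at degree }g-2\text{ of }\sum_{k\ge0}\binom{B_r+k-1}{B_r-1}q^{-rk}\Bigr),
\]
and then estimate each truncated series. Here the full (untruncated) series is $(1-q^{-r})^{-B_r}=\bigl(\tfrac{q^r}{q^r-1}\bigr)^{B_r}$, and the tail beyond degree $g-2$ (equivalently beyond $m_r$ blocks of size $r$, where $\sum rm_r\le g-2$) is exactly the integral remainder term: the identity
\[
\sum_{k> m}\binom{B+k-1}{B-1}x^{k}= B\binom{B+m}{B}\int_0^{x}\frac{(x-t)^{m}}{(1-t)^{B+m+1}}\,dt,
\qquad x=\frac{1}{q^r},
\]
is a standard integral form of the binomial tail (it is the incomplete-Beta / Taylor-remainder expression), and subtracting it from $(1-x)^{-B}$ produces precisely the bracketed factor in $h_{BRT}$. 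Finally one multiplies the two contributions into the prefactor $(q-1)^2/((g+1)(q+1)-B_1)$, noting the constraint $B_1\le g+1$ keeps the denominator positive.

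The main obstacle — and the place where care is needed — is the \emph{multiplicativity across degrees under truncation}. Restricting to divisors supported on $\bigcup_{r\in D_2}S_r$ is fine, but the truncation "total degree $\le g-2$" does not factor as a product of per-degree truncations; a product of truncations overcounts relative to... no, in fact \emph{undercounts}, and one must check the inequality goes the right way. The correct argument is that choosing $m_r$ blocks of degree $r$ for \emph{each} $r\in D_2$ uses total degree $\sum_{r\in D_2}rm_r\le g-2$, so every such tuple of choices corresponds to a genuine effective divisor of degree $\le g-2$, and distinct tuples give distinct divisors; hence $\sum_{n\le g-2}q^{-n}A_n$ dominates $\prod_{r\in D_2}\sum_{k_r\le m_r}\binom{B_r+k_r-1}{B_r-1}q^{-rk_r}$ — but the bracket in the theorem is the \emph{full} series minus the tail past $m_r$, i.e. it is $\sum_{k_r\le m_r}$ rewritten, so the bound is exactly this product and one must verify the tail identity and that no divisor is double-counted. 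I would therefore: (i) fix the representation of $h$ and the prefactor inequality by citing \cite{lamd,baro5}; (ii) prove the per-degree tail identity via differentiation under the integral sign (or integration by parts $m_r$ times); (iii) assemble the product bound for the $A_n$'s, carefully checking the degree bookkeeping $\sum rl_r\le g-1$ and $\sum rm_r\le g-2$ guarantees all constructed divisors are admissible and distinct; and (iv) collect terms to recognise $h_{BRT}$. Steps (i), (ii), (iv) are routine; step (iii), the disjointness/degree-accounting argument, is where the proof has to be written with precision.
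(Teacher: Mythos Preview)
Your overall strategy matches the paper's: use the Lachaud--Martin-Deschamps identity $h=S(F/\mathbb F_q)/R(F/\mathbb F_q)$ together with the upper bound $R\le \frac{(g+1)(q+1)-B_1}{(q-1)^2}$, then bound the two pieces of $S$ from below by counting effective divisors built only from places of the selected degrees, and finally rewrite each truncated per-degree series via the Taylor/integral remainder. Your treatment of the $\Sigma_2$ part (the hypercube inclusion under the constraint $\sum_{r\in D_2} r m_r\le g-2$, and the tail identity $\sum_{k>m}\binom{B+k-1}{B-1}x^k=B\binom{B+m}{B}\int_0^x\frac{(x-t)^m}{(1-t)^{B+m+1}}\,dt$) is essentially the paper's Proposition~3.10, and your ``multiplicativity under truncation'' discussion correctly identifies that the product of per-degree partial sums is a \emph{lower} bound for the truncated full sum, which is exactly what is needed.

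There is, however, a genuine gap in your handling of the first term. The quantity that actually appears in the Lachaud--Martin-Deschamps formula is
\[
S(F/\mathbb F_q)=\Sigma_1+\Sigma_2,\qquad \Sigma_1=\sum_{n=0}^{g-1}A_n,\quad \Sigma_2=\sum_{n=0}^{g-2}q^{g-1-n}A_n,
\]
not $A_{g-1}+\Sigma_2$. You replace $\Sigma_1$ by $A_{g-1}$ and then try to show $A_{g-1}\ge \prod_{r\in D_1}\binom{B_r+l_r}{l_r}$ by ``filling the leftover degree''. That step fails in general: the product $\prod_{r\in D_1}\binom{B_r+l_r}{l_r}=\prod_{r\in D_1}\sum_{k_r=0}^{l_r}\binom{B_r+k_r-1}{k_r}$ enumerates divisors of \emph{varying} degrees $\sum r k_r\le g-1$, and there is no canonical injective way to push them all up to degree exactly $g-1$ (the hypotheses do not even guarantee a rational place to pad with, and even when one exists the padding map is not injective). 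The paper avoids this entirely: it keeps the full sum $\Sigma_1=\sum_{n\le g-1}A_n$, observes that the hypercube $\prod_{r\in D_1}\{0,\dots,l_r\}$ sits inside $V_1=\{(b_r):\sum rb_r\le g-1\}$ by condition~(e), and obtains
\[
\Sigma_1\ \ge\ \sum_{(b_r)\in\prod_r\{0,\dots,l_r\}}\prod_{r\in D_1}\binom{B_r+b_r-1}{b_r}\ =\ \prod_{r\in D_1}\binom{B_r+l_r}{l_r}
\]
directly, with no filling needed. Once you replace $A_{g-1}$ by $\Sigma_1$ in your starting inequality (which is what \cite{lamd} actually gives), the rest of your sketch goes through and coincides with the paper's proof.
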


\smallskip

We then apply these bounds to each algebraic function field $F_k/{\mathbb F}_q$ of genus $g_k$ of towers 
having one or several strictly positive Tsfasman-Vladut invariants, namely
for which certain ratios $\frac{B_{r_i}(k)}{g_k}$ have a strictly positive limit when the integer $k$ 
tends to the infinity, where $B_{r_i}(k)$ denotes the number of places of degree $r_i$ of $F_k$. 
Note that the study is not done asymptotically, but for each individual step of the towers for which we determine precise 
parameters: in particular estimations of genus and of number of places of certain degrees for each step.
Moreover, we design new examples of {\it compositum} towers (built from known asymptotically good towers). 
Then, we make a precise study of each step of these towers, included the case of known asymptotically good towers.
All the given examples are chosen in order to cover a large spectrum of different significant cases.    
In particular, we construct a basis of examples of towers of algebraic function fields  illustrating a variety of 
significant situations. Moreover, for each step of these towers, we give information about parameters of concerned 
function field and we show how to best estimate the class numbers depending on the nature of the estimation 
of parameters (exact value, upper and lower bounds, the presence or absence of rational places).

\subsection{Organization of the paper}
In Section 2, we fix the main objective in connection with the fundamental equality (\ref{mainformula}).
 In Section \ref{sec:lower}, we estimate 
the two terms $\Sigma_1=\sum_{n=0}^{g-1}A_n$ and $\Sigma_2=\sum_{n=0}^{g-2}q^{g-1-n}A_n$ of 
the number $S(F/{\mathbb{F}}_q)$ introduced in Subsection \ref{preli}.
This yields in Section \ref{sec:class} new lower and upper bounds on the class number. 
In Section 5, we study lower bounds of the class number  of each step of several towers of algebraic function fields, 
in particular we give some numerical estimations.

\section{Preliminaries}\label{preli}

Throughout this paper we use basic facts and notations as in \cite{stic2}. 
We consider a function field $F/{\mathbb F}_q$ over the finite field with $q$ elements,
of genus $g=g(F)\geq 2$. 
Let $A_n=A_n(F/{\mathbb{F}}_q)$ be the number of effective divisors of degree $n$ of 
 $F/{\mathbb{F}}_q$ and  $h=h(F/{\mathbb{F}}_q)$ the class number of $F/{\mathbb{F}}_q$. 
Let $B_n=B_n(F/{\mathbb{F}}_q)$ the number of places of degree $n$ of $F/{\mathbb{F}}_q$.

Let us set

 $$S(F/{\mathbb{F}}_q)=\sum_{n=0}^{g-1}A_n+\sum_{n=0}^{g-2}q^{g-1-n}A_n \quad 
 \hbox{ and } \quad R(F/{\mathbb{F}}_q)=\sum_{i=1}^{g}\frac{1}{\mid1-\pi_i\mid^2},$$ 
where $(\pi_i,\overline{\pi_i})_{1\leq i \leq g}$ are the reciprocal roots of the numerator 
of the zeta-function $Z(F/{\mathbb{F}}_q,T)$ of $F/{\mathbb{F}}_q$.
By a result due to G. Lachaud and M. Martin-Deschamps \cite{lamd}, we know that
\begin{equation}\label{mainformula}
S(F/{\mathbb{F}}_q)=hR(F/{\mathbb{F}}_q).
\end{equation}

Therefore, in order to find good lower bounds on  $h$, one just needs to find a good lower bound on
$S(F/{\mathbb{F}}_q)$ and  a good upper bound on $R(F/{\mathbb{F}}_q)$. 

It is known by \cite{lamd} that the quantity $R(F/{\mathbb{F}}_q)$  is bounded by the following upper bound: 
 
\begin{equation}\label{R2}
R(F/{\mathbb{F}}_q)\leq \frac{1}{(q-1)^2}  \left(\strut (g+1)(q+1)-B_1(F/{\mathbb{F}}_q)\right).
\end{equation}
The inequality (\ref{R2}) is obtained as follows:
\begin{equation*}
R(F/{\mathbb{F}}_q)=\sum_{i=1}^{g}\frac{1}{(1-\pi_i)(1-\overline{\pi_i})}=
\sum_{i=1}^{g}\frac{1}{1+q-(\pi_i+\overline{\pi_i})}.
\end{equation*}
Multiplying the denominators by the corresponding conjugated quantities, 
we get: 
$$R(F/{\mathbb{F}}_q)\leq \frac{1}{(q-1)^2}\sum_{i=1}^{g}(1+q+\pi_i+\overline{\pi_i}).$$
This last inequality associated to the following formula deduced from the Weil's formulas:
$$\sum_{i=1}^{g}(\pi_i+\overline{\pi_i})=1+q-B_1(F/{\mathbb{F}}_q),$$
gives the inequality (\ref{R2}). 
The inequality  (\ref{R2}) cannot be improved in the general case. 
Remark that in the same way we can prove that
\begin{equation}\label{R3}
R(F/{\mathbb{F}}_q) \geq \frac{1}{(q+1)^2}  \left(\strut (g+1)(q+1)-B_1(F/{\mathbb{F}}_q)\right).
\end{equation}

Hence, in this paper, we propose to study some lower bounds on   
$S(F/{\mathbb{F}}_q)$. 
In this aim, we determine some bounds of the quantities $\sum_1= \sum_{n=0}^{g-1}A_n$ and $\sum_2= 
\sum_{n=0}^{g-2}q^{g-1-n}A_n$  obtained from the number of effective divisors of degree $n\leq g-1$ 
containing in their support 
places of some  fixed distinct degrees  $r_1,r_2,...,r_k \geq 1$.
We deduce bounds on the class number.
 
\section{General  bounds on the sums $\Sigma_1$ and $\Sigma_2$}\label{sec:lower}
In this section, from an exact formula, giving the number of effective divisors of degree n, 
we derive an exact formula for $S(F/{\mathbb{F}}_q)$. 
Unfortunately this exact formula involves some data, which are not necessarily available.
So, we propose methods adapted to cases where some data are not known.
These methods do not give longer exact values of $h$ but only more or less accurate estimates.

\subsection{The basic equations}

In this section, we define the main quantities required for the study of the class number.
We set 
\begin{equation}\label{sigma1}
\Sigma_1=\sum_{n=0}^{g-1}A_n \quad \hbox{and} \quad \Sigma_2=q^{g-1}\sum_{n=0}^{g-2}\frac{A_n}{q^n}.
\end{equation}
Then we can write
\[S=\Sigma_1+\Sigma_2.\]
Let us introduce the following notations that will be used in the whole paper.
\begin{equation}\label{Delta1}
\Delta_1=\{r ~|~ 1 \leq r \leq g-1 \hbox{ and } B_r \geq 1\},
\end{equation}
\begin{equation}\label{Delta2}
\Delta_2=\{r ~|~ 1 \leq r \leq g-2 \hbox{ and } B_r \geq 1\},
\end{equation}
\begin{equation}\label{Deltaprim1}
\Delta'_1=\{r ~|~ 2 \leq r \leq g-1 \hbox{ and } B_r \geq 1\}=\Delta_1 \setminus \{1\},
\end{equation}
\begin{equation}\label{Deltaprim2}
\Delta'_2=\{r ~|~ 2 \leq r \leq g-2 \hbox{ and } B_r \geq 1\}=\Delta_2 \setminus \{1\},
\end{equation}

Let us fix an integer  $n\geq 0$ and set
\begin{equation}\label{Un}
U_n=\left \{b=(b_r)_{r\in \Delta_1} ~|~ b_r \geq 0 \hbox{ and } \sum_{r\in \Delta_1} rb_r = n\right \}.
\end{equation}

First, note that if $B_r \geq 1$ and $b_r \geq 0$ in (\ref{Un}),  the number of solutions of the equation 
$n_1+n_2+ \cdots+ n_{B_r} = b_r$
with integers $\geq 0$ is
\begin{equation}\label{mcoef}
\left (
\begin{array}{c}
 B_r+b_r-1\\
 b_r
\end{array}
\right ).
\end{equation}
Then the number of effective divisors of degree $n$ is given by the following 
result \cite{tsvl} (cf. also \cite{baro5}):

\begin{proposition}\label{propoAn}
The number of effective divisors of degree $n$ of an algebraic function field $F/{\mathbb{F}}_q$ is
\begin{equation} A_n=\sum_{b\in U_n}\left [ \prod_{r\in \Delta_1} \left (
\begin{array}{c}
 B_r+b_r-1\\
b_r
\end{array}
\right )\right ]. \end{equation}
\end{proposition}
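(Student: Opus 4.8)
The plan is to realize an effective divisor of degree $n$ as a family of nonnegative multiplicities attached to the places of $F/\mathbb{F}_q$, and then to count such families by first organizing the places according to their degree and afterwards counting the multiplicity assignments degree by degree. Recall that an effective divisor is a formal sum $D=\sum_P n_P P$ ranging over the places $P$ of $F/\mathbb{F}_q$, with $n_P\in\mathbb{Z}_{\geq 0}$ almost all zero, and $\deg(D)=\sum_P n_P\deg(P)$. Hence prescribing an effective divisor of degree $n$ amounts exactly to prescribing a family $(n_P)_P$ of nonnegative integers, almost all zero, subject to the single linear constraint $\sum_P n_P\deg(P)=n$.

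First I would record the degree bound that controls the support. Since each summand $n_P\deg(P)$ is nonnegative and the total equals $n$, any place $P$ with $n_P>0$ satisfies $\deg(P)\leq n$. In the range $n\leq g-1$ relevant here, namely the range over which the $A_n$ are summed in forming $\Sigma_1$ and $\Sigma_2$ of (\ref{sigma1}), this forces $\deg(P)\leq g-1$; moreover a degree $r$ can actually occur only if a place of that degree exists, i.e. $B_r\geq 1$. Thus the support of $D$ meets only places whose degree lies in $\Delta_1$ as defined in (\ref{Delta1}). This observation is what makes the finite index set $\Delta_1$, and the finiteness of the eventual sum, legitimate, and I would state it explicitly.

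Next I would group the places by degree and introduce the degree profile. For each $r\in\Delta_1$, fix once and for all an enumeration $P_{r,1},\dots,P_{r,B_r}$ of the $B_r$ places of degree $r$; then $D$ is completely encoded by the tuples $(n_{r,1},\dots,n_{r,B_r})_{r\in\Delta_1}$ of its multiplicities. Putting $b_r=\sum_{j=1}^{B_r} n_{r,j}$, the degree constraint rewrites as
$$\sum_{r\in\Delta_1} r\,b_r=\sum_{r\in\Delta_1} r\sum_{j=1}^{B_r} n_{r,j}=\sum_P n_P\deg(P)=n,$$
so that $b=(b_r)_{r\in\Delta_1}\in U_n$ in the notation of (\ref{Un}). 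The effective divisors of degree $n$ are thereby partitioned according to the value $b\in U_n$ of their profile.

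Finally I would count inside each fiber and assemble. For a fixed $b\in U_n$ and a fixed degree $r\in\Delta_1$, distributing the total multiplicity $b_r$ among the $B_r$ chosen places amounts to solving $n_{r,1}+\cdots+n_{r,B_r}=b_r$ in nonnegative integers, whose number of solutions is $\binom{B_r+b_r-1}{b_r}$ by the stars-and-bars count already recorded in (\ref{mcoef}). Because these distributions are chosen independently across the distinct degrees $r$, the number of divisors with profile $b$ is the product $\prod_{r\in\Delta_1}\binom{B_r+b_r-1}{b_r}$; summing over all profiles $b\in U_n$ gives the claimed identity for $A_n$. None of the steps is a serious obstacle: the one point deserving care is the degree bound of the second paragraph, since it is precisely what justifies restricting the product to $\Delta_1$ and what ensures the outer sum runs over a finite set.
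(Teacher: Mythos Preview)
Your argument is correct and is precisely the standard count that the paper has in mind: the paper records the key combinatorial ingredient in (\ref{mcoef}) and then simply cites \cite{tsvl} and \cite{baro5} for the statement, so you have written out in full what the paper leaves to references. Your care about the range $n\leq g-1$ justifying the restriction of the index set to $\Delta_1$ is appropriate, since the paper's definition (\ref{Delta1}) caps $r$ at $g-1$ and the formula as stated is only used for $n\leq g-1$ in forming $\Sigma_1$ and $\Sigma_2$.
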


Let $D$ be a subset of $\Delta_1$. 
Let $I(D)=(I_r)_{r\in D}$ be a finite sequence of finite subsets $I_r$ of ${\mathbb N}$. We define
the hypercube $C_{I(D)}$ as the following product:
\begin{equation}\label{CI}
C_{I(D)}=\prod_{r\in D} I_r.
\end{equation}
The components of an element $b \in C_I(D)$ will be denoted by $b_r$, namely
$b=(b_r)_{r\in D}$.

\begin{lemme}\label{trivial}
Let $D$ be a subset of $\Delta_1$ and $f$ a map from $D \times {\mathbb N}$ to ${\mathbb N}$ .
Then
\begin{equation*}
\sum_{b\in C_I(D)} \left [\prod_{r\in D}
f(r,b_r)\right]=
\prod_{r\in D}\left[ \sum_{b_r\in I_r}
f(r,b_r).\right]
\end{equation*}
\end{lemme}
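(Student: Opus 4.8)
The plan is to prove this sum--product interchange identity by induction on the cardinality of the index set $D$, which is finite since $D\subseteq\Delta_1\subseteq\{1,\dots,g-1\}$. This is precisely the finite generalized distributive law, and I would exploit the fact that the hypercube $C_{I(D)}$ splits as an iterated Cartesian product. Because $f$ takes values in ${\mathbb N}$, every sum and product involved is a finite sum of nonnegative integers, so no question of convergence or admissibility of reordering arises; the identity is purely formal.

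For the base case I would take $|D|=1$, say $D=\{s\}$. Then $C_{I(D)}=I_s$, an element $b$ is a single value $b_s\in I_s$, the product $\prod_{r\in D}f(r,b_r)$ collapses to $f(s,b_s)$, and both sides equal $\sum_{b_s\in I_s}f(s,b_s)$. (If one also wishes to admit $D=\varnothing$, the left side is the single empty product, equal to $1$, and the right side is likewise the empty product $1$, so the two agree under the usual conventions.)

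For the inductive step I would assume the identity for every subset of size $k$ and let $|D|=k+1$. Fixing some $s\in D$ and writing $D=D'\sqcup\{s\}$ with $D'=D\setminus\{s\}$, the key structural observation is that $C_{I(D)}=C_{I(D')}\times I_s$: each $b\in C_{I(D)}$ corresponds bijectively to a pair $(b',b_s)$ with $b'=(b_r)_{r\in D'}\in C_{I(D')}$ and $b_s\in I_s$, and under this correspondence the summand factorizes as $\prod_{r\in D}f(r,b_r)=\bigl(\prod_{r\in D'}f(r,b_r)\bigr)\,f(s,b_s)$. Rewriting the left-hand side as a double sum over $(b',b_s)$ and pulling the factor $f(s,b_s)$, which is independent of $b'$, outside the inner sum yields
$$\sum_{b\in C_{I(D)}}\prod_{r\in D}f(r,b_r)=\sum_{b_s\in I_s}f(s,b_s)\sum_{b'\in C_{I(D')}}\prod_{r\in D'}f(r,b_r).$$
Applying the induction hypothesis to the inner sum replaces it by $\prod_{r\in D'}\bigl(\sum_{b_r\in I_r}f(r,b_r)\bigr)$, and recognizing $\sum_{b_s\in I_s}f(s,b_s)$ as the missing $r=s$ factor of the product completes the induction.

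I do not expect a genuine obstacle in this argument: the only point that requires care is the clean factorization of the Cartesian product $C_{I(D)}=C_{I(D')}\times I_s$ together with the bookkeeping of which factor is extracted from the sum, and fixing a convention for the degenerate cases $|D|\le 1$. The statement holds for any finite index set $D$ and any ${\mathbb N}$-valued map $f$, which is exactly the generality in which it will be applied when factorizing the divisor counts of Proposition \ref{propoAn}.
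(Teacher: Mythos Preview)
Your proof is correct and takes essentially the same approach as the paper: the paper simply writes the left-hand side as an iterated sum $\sum_{b_{r_1}\in I_{r_1}}\cdots\sum_{b_{r_s}\in I_{r_s}} f_{r_1}(b_{r_1})\cdots f_{r_s}(b_{r_s})$ and observes that this is the expansion of the product on the right, whereas you make the same expansion precise by induction on $|D|$. The two arguments are the same distributive-law computation, yours just spelled out more formally.
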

\begin{proof}
Denote by $s$ the cardinality of $D$ and denote by $f_r$ the function defined by 
$f_r(b_r)=f(r,b_r)$.
Remark that the previous equation can be written:
$$\sum_{b_{r_1}\in I_{r_1}}\sum_{b_{r_2} \in I_{r_2}} \cdots \sum_{b_{r_s}\in I_{r_s}}
f_{r_1}(b_{r_1})f_{r_2}(b_{r_2})\cdots f_{r_s}(b_{r_s})=$$
$$\left(\sum_{b_{r_1}\in I_{r_1}}
f_{r_1}(b_{r_1})\right)\left(\sum_{b_{r_2}\in I_{r_2}}
f_{r_2}(b_{r_2})\right)\cdots \left(\sum_{b_{r_s}\in I_{r_s}}
f_{r_s}(b_{r_s})\right).$$
The first member is the development of the product of the second member.
\end{proof}

\subsection{The sum $\Sigma_1$}

 In this section, we first give an exact formula of the sum $\Sigma_1$. 
 We then deduce lower and upper bounds for  this sum.
 We obtain the following from Proposition \ref{propoAn} and (\ref{sigma1}): 

\begin{proposition}\label{sommes1}
\begin{equation} \Sigma_1=\sum_{n=0}^{g-1}A_n=\sum_{b\in V_1} \left [\prod_{r\in \Delta_1}
\left(\begin{array}{c}
 B_r+b_r-1\\
b_r
\end{array}\right)\right ],
\end{equation}
where 
$$V_1=\bigcup_{n=0}^{g-1}U_n=\left\{b=(b_r)_{r\in\Delta_1}~|~ b_r\geq 0 \textrm{ and } 
\sum_{r\in\Delta_1} rb_r \leq g-1\right\},$$
\end{proposition}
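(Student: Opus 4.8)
The plan is to derive Proposition \ref{sommes1} as an immediate bookkeeping consequence of Proposition \ref{propoAn} together with the definitions \eqref{sigma1} and \eqref{Un}. First I would observe that $\Sigma_1 = \sum_{n=0}^{g-1} A_n$ by \eqref{sigma1}, and then substitute the exact expression $A_n = \sum_{b\in U_n}\prod_{r\in\Delta_1}\binom{B_r+b_r-1}{b_r}$ from Proposition \ref{propoAn}. This gives
\[
\Sigma_1 = \sum_{n=0}^{g-1}\ \sum_{b\in U_n}\ \prod_{r\in\Delta_1}\binom{B_r+b_r-1}{b_r}.
\]

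The key step is then to interchange/merge the two summations: since $U_n = \{b=(b_r)_{r\in\Delta_1} \mid b_r\geq 0,\ \sum_{r\in\Delta_1} rb_r = n\}$, the sets $U_0, U_1, \dots, U_{g-1}$ are pairwise disjoint, and their union is exactly
\[
V_1 = \bigcup_{n=0}^{g-1} U_n = \Bigl\{b=(b_r)_{r\in\Delta_1}\ \Big|\ b_r\geq 0 \text{ and } \sum_{r\in\Delta_1} rb_r \leq g-1\Bigr\}.
\]
Indeed, a tuple $b$ with nonnegative entries lies in some $U_n$ with $0\leq n\leq g-1$ if and only if $n := \sum_{r\in\Delta_1} rb_r$ satisfies $0 \leq n \leq g-1$, which is precisely the defining condition of $V_1$; and each such $b$ lies in exactly one $U_n$, namely for $n = \sum_{r\in\Delta_1} rb_r$. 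Hence the double sum over $n$ and $b\in U_n$ collapses to a single sum over $b\in V_1$, yielding
\[
\Sigma_1 = \sum_{b\in V_1}\ \prod_{r\in\Delta_1}\binom{B_r+b_r-1}{b_r},
\]
which is the claimed identity. The disjointness-and-union verification is genuinely the only content here, and it is routine; Lemma \ref{trivial} is not needed at this stage (it will presumably enter later when the sum over the ``box'' $V_1$ is under- or over-estimated by a product over hypercubes $C_{I(D)}$). I do not anticipate any real obstacle: the proposition is a reindexing of a finite sum, and the main care required is simply to state clearly that $\{U_n\}_{0\le n\le g-1}$ partitions $V_1$.

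One small point I would flag for rigor: when some $\Delta_1$ is empty (which cannot happen here since $g\geq 2$ forces at least $B_1 \geq 1$ in the relevant applications, but in principle), the empty product is $1$ and $V_1 = \{()\}$, so the formula still reads $\Sigma_1 = 1 = A_0$, consistent with $A_0 = 1$. Otherwise the argument above goes through verbatim.
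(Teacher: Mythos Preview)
Your proposal is correct and matches the paper's approach exactly: the paper states the proposition as an immediate consequence of Proposition~\ref{propoAn} and \eqref{sigma1} without further argument, and what you have written is precisely the routine reindexing that underlies that claim. One minor quibble: $g\geq 2$ does not by itself force $B_1\geq 1$ (there exist function fields with no rational places), but you already hedge this and your handling of the empty-$\Delta_1$ edge case is fine regardless.
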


We can now  establish a lower bound for the quantity $\Sigma_1$:

\begin{theoreme}[Lower bound]\label{lowerboundsum1}
Let $F/{\mathbb{F}}_q$ be an algebraic function field of genus $g\geq 2$. 

\begin{enumerate}
 \item Let $m=(m_r)_{r\in\Delta_1}$ be a finite sequence of integers 
$m_r \geq 0$ such that $\sum_{r\in \Delta_1} rm_r \leq g-1$.  
Then
\begin{equation}\label{lb1}
\Sigma_1 \geq 
\prod_{r\in \Delta_1} \left(\begin{array}{c}
 B_r+m_r\\
m_r
\end{array}\right).
\end{equation}
\item Suppose that $B_1\geq 1$. Let $r_1,r_2,\cdots,r_u$ be distinct elements in $\Delta_1'$. Then
\begin{eqnarray}\label{bl2}
\Sigma_1 &\geq& \left(\begin{array}{c}
 B_1+g-1\\
 g-1
\end{array}\right)\\ 
\nonumber
&+& \sum_{i=1}^u \left(\left(\begin{array}{c}
 B_{r_i}+\left\lfloor \frac{g-1}{r_i}\right\rfloor \\
\left\lfloor \frac{g-1}{r_i}\right\rfloor
\end{array}\right)-1\right)
\left(\begin{array}{c}
 B_{1}+ \left(\strut (g-1)\mod r_i\right)\\
(g-1)\mod r_i
\end{array}\right).
\end{eqnarray}
\end{enumerate}
\end{theoreme}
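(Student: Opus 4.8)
The plan is to derive both lower bounds from the exact formula in Proposition~\ref{sommes1}, namely $\Sigma_1=\sum_{b\in V_1}\prod_{r\in\Delta_1}\binom{B_r+b_r-1}{b_r}$, by restricting the sum over $V_1$ to a carefully chosen subset of index tuples and then applying Lemma~\ref{trivial} to factor the restricted sum as a product over the relevant degrees. Since every summand $\prod_{r\in\Delta_1}\binom{B_r+b_r-1}{b_r}$ is nonnegative, any such restriction yields a valid lower bound; the art is in choosing the subset so that the restricted sum factors cleanly and evaluates to the stated closed form.

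For part~(1), given the sequence $m=(m_r)_{r\in\Delta_1}$ with $\sum_{r\in\Delta_1}rm_r\le g-1$, I would restrict the sum to the single hypercube $C_{I(\Delta_1)}$ with $I_r=\{0,1,\dots,m_r\}$; this hypercube is contained in $V_1$ precisely because $\sum_r rb_r\le\sum_r rm_r\le g-1$ for any $b$ in it. By Lemma~\ref{trivial},
\[
\Sigma_1\ \geq\ \sum_{b\in C_{I(\Delta_1)}}\prod_{r\in\Delta_1}\binom{B_r+b_r-1}{b_r}
\ =\ \prod_{r\in\Delta_1}\left(\sum_{b_r=0}^{m_r}\binom{B_r+b_r-1}{b_r}\right),
\]
and the inner sum is a hockey-stick identity: $\sum_{j=0}^{m_r}\binom{B_r+j-1}{j}=\binom{B_r+m_r}{m_r}$. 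This gives \eqref{lb1} immediately.

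For part~(2), the idea is to isolate the contribution of divisors supported only at degree-$1$ places, plus, for each $i$, the contribution of divisors supported at degree-$1$ places together with at least one degree-$r_i$ place, being careful not to double-count. Concretely I would bound $\Sigma_1$ below by the sum of the terms with $b_r=0$ for all $r\ne 1$ (which, by the one-variable hockey-stick identity applied up to $b_1=g-1$, contributes $\binom{B_1+g-1}{g-1}$), and, for each $i=1,\dots,u$, the terms with $b_r=0$ for $r\notin\{1,r_i\}$, $1\le b_{r_i}\le\lfloor (g-1)/r_i\rfloor$, and $b_1\le (g-1)\bmod r_i$. The constraint $\sum_r rb_r\le g-1$ is met on each such block because $r_ib_{r_i}+b_1\le r_i\lfloor(g-1)/r_i\rfloor+((g-1)\bmod r_i)=g-1$; and these blocks (for distinct $i$) meet the all-degree-$1$ block only along $b_{r_i}=0$, which is excluded, so summing them gives a genuine lower bound for $\Sigma_1$. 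Applying Lemma~\ref{trivial} to each block factors it as $\big(\sum_{j=1}^{\lfloor(g-1)/r_i\rfloor}\binom{B_{r_i}+j-1}{j}\big)\big(\sum_{k=0}^{(g-1)\bmod r_i}\binom{B_1+k-1}{k}\big)$; the first factor equals $\binom{B_{r_i}+\lfloor(g-1)/r_i\rfloor}{\lfloor(g-1)/r_i\rfloor}-1$ (hockey-stick minus the $j=0$ term) and the second equals $\binom{B_1+((g-1)\bmod r_i)}{(g-1)\bmod r_i}$, yielding \eqref{bl2}.

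The main obstacle is the bookkeeping in part~(2): one must verify that the chosen index subsets are genuinely disjoint (after removing the $b_{r_i}=0$ slices) and all contained in $V_1$, so that their contributions may be added without overcounting — in particular that no tuple with two distinct $b_{r_i},b_{r_j}$ both nonzero is counted, and that the degree bound $\sum r b_r\le g-1$ is never violated. Once the combinatorial geometry of these blocks is pinned down, the evaluation is just repeated use of Lemma~\ref{trivial} and the hockey-stick identity, which I would not belabor.
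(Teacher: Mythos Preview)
Your proposal is correct and follows essentially the same approach as the paper: both parts restrict the sum over $V_1$ to hypercubes (a single box $\prod_r\{0,\dots,m_r\}$ for part~(1); the ``degree-$1$ only'' box together with the disjoint ``degree-$1$ plus one degree-$r_i$'' boxes for part~(2)), invoke Lemma~\ref{trivial} to factor, and finish with the hockey-stick identity. The disjointness you flag as the main obstacle is exactly what the paper asserts and is immediate from the fact that block $i$ forces $b_{r_i}\ge 1$ while every other block forces $b_{r_i}=0$.
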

\begin{proof}

\noindent\begin{enumerate}
\item Let us consider the hypercube $C_{I(\Delta_1)}$ defined
by $I(\Delta_1)=(I_r)_{r\in \Delta_1}$ where $I_r$ is the interval $I_r=\{0,\cdots,m_r\}$. The condition 
$\sum_{r\in \Delta_1}rm_r \leq g-1$ implies that $C_{I(\Delta_1)} \subseteq V_1$. 
Then by using Proposition \ref{sommes1}, the following holds:
\begin{equation*} 
\Sigma_1=\sum_{b\in V_1} \left [\prod_{r\in \Delta_1}
\left(\begin{array}{c}
 B_r+b_r-1\\
b_r
\end{array}\right)\right ] \geq \sum_{b\in C_{I(\Delta_1)}} \left [\prod_{r\in \Delta_1}
\left(\begin{array}{c}
 B_r+b_r-1\\
b_r
\end{array}\right)\right].
\end{equation*}
Now, by Lemma \ref{trivial} we get
\begin{equation*} 
\Sigma_1 \geq \prod_{r\in \Delta_1}\left [\sum_{b_r=0}^{m_r}  
\left(\begin{array}{c}
 B_r+b_r-1\\
b_r
\end{array}\right)
\right]=
\prod_{r\in \Delta_1}\left(\begin{array}{c}
 B_r+m_r\\
m_r
\end{array}\right).
\end{equation*}
\item  For each $0 \leq i \leq u$, we  introduce the hypercube $C_{I_r^{(i)}(\Delta_1)}$ as follows:
$$ I^{(0)}_r= \left\{
\begin{array}{ll}
\{0,\cdots,g-1\} & \hbox{if } r=1, \\
\{0\} & \hbox{if } r>1. 
\end{array}
\right .$$
and for  $i>0$,
$$ I^{(i)}_r= \left\{
\begin{array}{ll}
\{1,\cdots,\left\lfloor \frac{g-1}{r}\right\rfloor\} & \hbox{if } r=r_i, \\
\{0,\cdots,g-1\mod r_i\} & \hbox{if } r=1,\\
\{0\} & \hbox{else}\\
\end{array}
\right .$$
These  hypercubes are pairwise disjoint and all are included in $V_1$.
Then
\begin{eqnarray*}
\Sigma_1 &\geq& \sum_{i=0}^{u} \prod_{r\in \Delta_1}\sum_{b_r \in I_r^{(i)}}
\left (
\begin{array}{c}
 B_r+b_r-1\\
 b_r
\end{array}
\right )
=\sum_{b_1=0}^{g-1}
\left(
\begin{array}{c}
 B_1+b_1-1\\
 b_1
\end{array}
\right )\\
&+&
\sum_{i=1}^{u} \left(\sum_{b_{r_i}=1}^{\left\lfloor \frac{g-1}{r_i}\right\rfloor}
\left( \begin{array}{c}
 B_{r_i}+b_{r_i}-1\\
 b_{r_i}
\end{array}
\right)\right) \left(\sum_{b_{1}=0}^{(g-1) \mod r_i}
\left(\begin{array}{c}
 B_{1}+b_{1}-1\\
 b_{1}
\end{array}\right)\right)\\
&=&\left(\begin{array}{c}
 B_1+g-1\\
 g-1
\end{array}\right)\\
&+& \sum_{i=1}^u \left(\left(\begin{array}{c}
 B_{r_i}+\left\lfloor \frac{g-1}{r_i}\right\rfloor \\
\left\lfloor \frac{g-1}{r_i}\right\rfloor
\end{array}\right)-1\right)
\left(\begin{array}{c}
 B_{1}+ \left(\strut (g-1)\mod r_i\right)\\
(g-1)\mod r_i
\end{array}\right).
\end{eqnarray*}

\end{enumerate}
\end{proof}

\begin{remarque}\label{remarquesum1}
 By using (\ref{bl2}) with $u=1$, 
we obtain the following: 
\begin{eqnarray*}
\Sigma_1 &\geq&  \left(
\begin{array}{c}
 B_1+g-1\\
 g-1
\end{array}
\right)+
\left(
\begin{array}{c}
 B_r+\left\lfloor \frac{g-1}{r_1}\right\rfloor\\
  \left\lfloor \frac{g-1}{r_1}\right\rfloor
\end{array}\right)
\left(\begin{array}{c}
 B_1+\left(\strut (g-1)\mod r_1\right)\\
  (g-1)\mod r_1
\end{array}
\right)\\
&-& 
\left(
\begin{array}{c}
B_1+\left(\strut (g-1)\mod r_1\right)\\
 (g-1)\mod r_1
\end{array}
\right)
\end{eqnarray*}\label{bl3}
which is better than the following formula obtained in \cite{baro5}:

\begin{equation}
\Sigma_1 \geq 
\left(
\begin{array}{c}
 B_r+\left\lfloor \frac{g-1}{r_1}\right\rfloor\\
  \left\lfloor \frac{g-1}{r_1}\right\rfloor
\end{array}\right)
\left(\begin{array}{c}
 B_1+\left(\strut (g-1)\mod r_1\right)\\
  (g-1)\mod r_1
\end{array}
\right) 
\end{equation}\label{bl20}
except in the case $r_1=1$ where the two bounds are the same.
\end{remarque}

\begin{theoreme}[Upper bound]\label{upperboundsum1}
Let $F/{\mathbb{F}}_q$ be an algebraic function field of genus $g\geq 2$.
Then the following holds:
\begin{equation}
\Sigma_1 \leq \prod_{r\in \Delta_1} \left(\begin{array}{c}
 B_r+\left\lfloor \frac{g-1}{r}\right\rfloor\\
\left\lfloor \frac{g-1}{r}\right\rfloor
\end{array}\right).
\end{equation}
\end{theoreme}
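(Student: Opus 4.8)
The plan is to bound $\Sigma_1$ from above by enlarging the index set $V_1$ of Proposition \ref{sommes1} to a hypercube on which Lemma \ref{trivial} applies. Recall that
\[
\Sigma_1 = \sum_{b\in V_1}\left[\prod_{r\in\Delta_1}\left(\begin{array}{c} B_r+b_r-1\\ b_r\end{array}\right)\right],
\qquad
V_1=\left\{b=(b_r)_{r\in\Delta_1} ~\middle|~ b_r\geq 0,\ \sum_{r\in\Delta_1} rb_r \leq g-1\right\}.
\]
First I would observe that if $b\in V_1$, then for each fixed $r_0\in\Delta_1$ we have $r_0 b_{r_0}\leq \sum_{r\in\Delta_1} r b_r \leq g-1$, hence $b_{r_0}\leq\lfloor\frac{g-1}{r_0}\rfloor$. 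Therefore $V_1\subseteq C_{I(\Delta_1)}$, where $I(\Delta_1)=(I_r)_{r\in\Delta_1}$ with $I_r=\{0,1,\dots,\lfloor\frac{g-1}{r}\rfloor\}$.

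Next, since all the binomial coefficients $\binom{B_r+b_r-1}{b_r}$ are nonnegative integers, enlarging the summation range only increases the sum:
\[
\Sigma_1 \leq \sum_{b\in C_{I(\Delta_1)}}\left[\prod_{r\in\Delta_1}\left(\begin{array}{c} B_r+b_r-1\\ b_r\end{array}\right)\right].
\]
Then I would apply Lemma \ref{trivial} with $D=\Delta_1$ and $f(r,b_r)=\binom{B_r+b_r-1}{b_r}$ to factor the sum over the hypercube into a product of one-dimensional sums:
\[
\Sigma_1 \leq \prod_{r\in\Delta_1}\left[\sum_{b_r=0}^{\lfloor\frac{g-1}{r}\rfloor}\left(\begin{array}{c} B_r+b_r-1\\ b_r\end{array}\right)\right].
\]
Finally, the hockey-stick identity $\sum_{j=0}^{m}\binom{B+j-1}{j}=\binom{B+m}{m}$ (already used in the proof of Theorem \ref{lowerboundsum1}(1)) collapses each inner sum, yielding
\[
\Sigma_1 \leq \prod_{r\in\Delta_1}\left(\begin{array}{c} B_r+\lfloor\frac{g-1}{r}\rfloor\\ \lfloor\frac{g-1}{r}\rfloor\end{array}\right),
\]
which is the claimed bound.

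There is essentially no hard part here: the argument is the mirror image of the lower bound in Theorem \ref{lowerboundsum1}(1), trading "$C_{I(\Delta_1)}\subseteq V_1$" for "$V_1\subseteq C_{I(\Delta_1)}$". The only point requiring a moment of care is the verification that the projection bound $b_{r_0}\leq\lfloor\frac{g-1}{r_0}\rfloor$ holds for \emph{every} coordinate simultaneously on $V_1$ — but this is immediate because all terms $rb_r$ in the constraint $\sum_{r\in\Delta_1} rb_r\leq g-1$ are nonnegative, so each one individually is at most $g-1$. One should also note that $\Delta_1$ is finite (it is a subset of $\{1,\dots,g-1\}$), so the product is finite and Lemma \ref{trivial} applies without convergence concerns.
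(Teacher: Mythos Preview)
Your proof is correct and follows essentially the same approach as the paper: enlarge $V_1$ to the hypercube $C_{I(\Delta_1)}$ with $I_r=\{0,\dots,\lfloor(g-1)/r\rfloor\}$, apply Lemma \ref{trivial}, and collapse each factor via the hockey-stick identity. Your write-up is in fact more detailed than the paper's, which simply asserts $V_1\subseteq C_{I(\Delta_1)}$ without the explicit projection argument.
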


\begin{proof}
Let us consider the hypercube $C_{I(\Delta_1)}$ where $I(\Delta_1)=(I_r)_{r\in \Delta_1}$ is such that
for any $r\in \Delta_1 $, the subset $I_r$ is $\{0,\cdots,\left \lfloor \frac{g-1}{r} \right\rfloor$\}.
Hence, $V_1 \subseteq C_{I(\Delta_1)}$ and the following holds:
\begin{equation*}
\begin{split}
 \Sigma_1 \leq \prod_{r\in\Delta_1}\left[ \sum_{b_r=0}^{\left \lfloor \frac{g-1}{r} \right\rfloor}
\left(\begin{array}{c}
 B_r+b_r-1\\
b_r
\end{array}\right)\right]
=\prod_{r\in\Delta_1}
\left(\begin{array}{c}
 B_r+\left\lfloor \frac{g-1}{r_i}\right\rfloor\\
\left\lfloor \frac{g-1}{r_i}\right\rfloor
\end{array}\right)
\end{split}
\end{equation*}
\end{proof}

\subsection{The sum $\Sigma_2$}
In this section, we first give an exact formula of the sum $\Sigma_2$. 
 Then, we deduce lower and upper bounds of  this sum.
 From Proposition \ref{propoAn} and  (\ref{sigma1}), we obtain: 
 
\begin{proposition}
\begin{equation} \Sigma_2=
q^{g-1}\sum_{b\in V_2} \left [\prod_{r\in \Delta_2}\frac{1}{q^{rb_r}}
\left(\begin{array}{c}
 B_r+b_r-1\\
b_r
\end{array}\right)\right ],
\end{equation}
where 
$$V_2=\bigcup_{n=0}^{g-2}U_n=\left\{b=(b_r)_{r\in\Delta_2}~|~ \sum_{r\in\Delta_2} rb_r \leq g-2\right\}.$$
\end{proposition}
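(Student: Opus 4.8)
The plan is to derive the identity directly by substituting the exact formula for $A_n$ from Proposition \ref{propoAn} into the definition of $\Sigma_2$ in (\ref{sigma1}) and then reorganizing the resulting double sum. First I would write
\[
\Sigma_2 = q^{g-1}\sum_{n=0}^{g-2}\frac{A_n}{q^n} = q^{g-1}\sum_{n=0}^{g-2}\frac{1}{q^n}\sum_{b\in U_n}\prod_{r\in\Delta_1}\binom{B_r+b_r-1}{b_r},
\]
where the inner sum ranges over tuples $b=(b_r)_{r\in\Delta_1}$ with $\sum_{r\in\Delta_1}rb_r=n$. This is the same opening move used to obtain Proposition \ref{sommes1} for $\Sigma_1$, with the extra factor $q^{-n}$ still to be dealt with.

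The one point genuinely requiring care is the mismatch between the index set $\Delta_1$ appearing in Proposition \ref{propoAn} and the index set $\Delta_2$ appearing in the statement. I would observe that $\Delta_1\setminus\Delta_2\subseteq\{g-1\}$: the only degree in $\{1,\dots,g-1\}$ that can lie in $\Delta_1$ but not in $\Delta_2$ is $g-1$, and this happens precisely when $B_{g-1}\geq 1$. Now every $b\in U_n$ with $n\leq g-2$ satisfies $(g-1)b_{g-1}\leq \sum_{r\in\Delta_1}rb_r=n\leq g-2<g-1$, forcing $b_{g-1}=0$ whenever $g-1\in\Delta_1$. For that vanishing component the binomial factor is $\binom{B_{g-1}-1}{0}=1$, so it may be deleted from the product without changing its value. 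This yields a value-preserving bijection, for each $n\leq g-2$, between the $\Delta_1$-tuples counted by $U_n$ and the $\Delta_2$-tuples with the same constraint $\sum_{r\in\Delta_2}rb_r=n$, allowing me to rewrite the product over $\Delta_2$ and the inner sum over the corresponding set of $\Delta_2$-tuples.

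Next I would absorb the factor $q^{-n}$ into the product. Since each $b$ in the inner sum satisfies $n=\sum_{r\in\Delta_2}rb_r$, we have $q^{-n}=\prod_{r\in\Delta_2}q^{-rb_r}$, so that
\[
\Sigma_2 = q^{g-1}\sum_{n=0}^{g-2}\ \sum_{b:\,\sum_{r\in\Delta_2}rb_r=n}\ \prod_{r\in\Delta_2}\frac{1}{q^{rb_r}}\binom{B_r+b_r-1}{b_r}.
\]
Finally I would note that the two summations together run over the disjoint union $\bigcup_{n=0}^{g-2}U_n$, which by definition is exactly $V_2=\{b=(b_r)_{r\in\Delta_2}:\sum_{r\in\Delta_2}rb_r\leq g-2\}$. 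Collapsing them into a single sum over $V_2$ produces the claimed formula.

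I would expect no serious obstacle: the argument is a bookkeeping exercise of the same flavor as the derivation of Proposition \ref{sommes1}, and unlike that one it does not even invoke Lemma \ref{trivial}, since no factoring of the sum is needed. The only non-routine step is the passage from $\Delta_1$ to $\Delta_2$, i.e. verifying that restricting the degree range to $\leq g-2$ discards exactly the possible degree-$(g-1)$ contribution and leaves the effective-divisor counts $A_n$ for $n\leq g-2$ untouched; this is what I would take the most care to state cleanly.
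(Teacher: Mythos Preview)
Your proposal is correct and follows exactly the route the paper indicates: the paper merely states that the formula follows from Proposition~\ref{propoAn} and (\ref{sigma1}), and your argument spells out precisely that substitution and reorganization. Your careful treatment of the passage from $\Delta_1$ to $\Delta_2$ (forcing $b_{g-1}=0$ when $n\leq g-2$) is a point the paper leaves implicit, so in that respect your write-up is more complete than the original.
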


\begin{theoreme}[Lower bound]\label{lowerboundsum2}
Let $F/{\mathbb{F}}_q$ be an algebraic function field of genus $g\geq 2$.
\begin{enumerate}
 \item Suppose that $B_1\geq 1$. Let $m=(m_r)_{r\in\Delta_2}$ be a finite sequence of integers 
$m_r \geq 0$ such that $\sum_{r\in \Delta_2} r m_r \leq g-2$. 
Then the following purely multiplicative formula holds:
$$\Sigma_2 \geq q^{g-1}\prod_{r \in \Delta_2}\left [ \sum_{b_r=0}^{m_r} \frac{1}{q^{rb_r}}\left(\begin{array}{c}
B_r+b_r-1\\
b_r
\end{array}\right)\right ].$$
\item Let $r_1,r_2,\cdots,r_u$ be $u$ distinct elements in $\Delta_2'$. 
Then the following holds:

\begin{equation}\label{toto}
\begin{split}
\Sigma_2\geq q^{g-1}\sum_{b_1=0}^{g-2}\frac{1}{q^{b_{1}}}
\left(
\begin{array}{c}
 B_1+b_1-1\\
 b_1
\end{array}
\right )
+\\
q^{g-1}\sum_{i=1}^u \left(\left [ \sum_{b_{1}=0}^{(g-2)\mod r_i} \frac{1}{q^{b_{1}}}
\left(\begin{array}{c}
B_{1}+b_{1}-1\\
b_{1}
\end{array}\right)\right ]\right .\times\\
\left . \left [ \sum_{b_{r_i}=0}^{\left\lfloor\frac{g-2}{r_i} \right\rfloor} \frac{1}{q^{r_ib_{r_i}}}
\left(\begin{array}{c}
B_{r_i}+b_{r_i}-1\\
b_{r_i}
\end{array}\right)-1\right ]\right).
\end{split}
\end{equation}
\end{enumerate}
\end{theoreme}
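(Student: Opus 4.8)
The plan is to carry over, step for step, the proof of Theorem~\ref{lowerboundsum1} to the present setting, starting from the exact formula $\Sigma_2 = q^{g-1}\sum_{b\in V_2}\prod_{r\in\Delta_2} q^{-rb_r}\binom{B_r+b_r-1}{b_r}$ recorded just above, and using the hypercubes of~(\ref{CI}) together with the factorization of Lemma~\ref{trivial}. Throughout, every summand in the formula for $\Sigma_2$ is nonnegative, so replacing $V_2$ by any subset yields a lower bound; the point is to choose subsets that are products of intervals, so that Lemma~\ref{trivial} turns the sum into a product.

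For part~(1), I would take the hypercube $C_{I(\Delta_2)}$ with $I_r=\{0,\dots,m_r\}$ for $r\in\Delta_2$. The hypothesis $\sum_{r\in\Delta_2} rm_r\le g-2$ gives $C_{I(\Delta_2)}\subseteq V_2$, hence $\Sigma_2\ge q^{g-1}\sum_{b\in C_{I(\Delta_2)}}\prod_{r\in\Delta_2} q^{-rb_r}\binom{B_r+b_r-1}{b_r}$; applying Lemma~\ref{trivial} with $f(r,b_r)=q^{-rb_r}\binom{B_r+b_r-1}{b_r}$ factors the right-hand side as $q^{g-1}\prod_{r\in\Delta_2}\bigl[\sum_{b_r=0}^{m_r} q^{-rb_r}\binom{B_r+b_r-1}{b_r}\bigr]$, which is the asserted bound.

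For part~(2), imitating the second half of Theorem~\ref{lowerboundsum1}, I would introduce, for $0\le i\le u$, the hypercubes $C_{I^{(i)}(\Delta_2)}$ defined by $I^{(0)}_r=\{0,\dots,g-2\}$ if $r=1$ and $\{0\}$ if $r>1$, and, for $i\ge 1$, $I^{(i)}_{r_i}=\{1,\dots,\lfloor(g-2)/r_i\rfloor\}$, $I^{(i)}_1=\{0,\dots,(g-2)\bmod r_i\}$, and $I^{(i)}_r=\{0\}$ otherwise (the coordinate $b_{r_i}$ makes sense because $r_i\in\Delta_2'\subseteq\Delta_2$, and if $B_1=0$ the index $1$ is absent and the sums over $b_1$ below collapse to their $b_1=0$ term, consistently with the stated formula). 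One checks that each $C_{I^{(i)}(\Delta_2)}\subseteq V_2$, using $r_i\lfloor(g-2)/r_i\rfloor+((g-2)\bmod r_i)=g-2$, and that the $C_{I^{(i)}(\Delta_2)}$ are pairwise disjoint: for $i\ne j$ with $j\ge 1$, the coordinate $b_{r_j}$ equals $0$ on $C_{I^{(i)}(\Delta_2)}$ (since $r_j\ge 2$ forces $I^{(i)}_{r_j}=\{0\}$), while $b_{r_j}\ge 1$ on $C_{I^{(j)}(\Delta_2)}$. As these are disjoint subsets of $V_2$, $\Sigma_2\ge q^{g-1}\sum_{i=0}^u\sum_{b\in C_{I^{(i)}(\Delta_2)}}\prod_{r\in\Delta_2} q^{-rb_r}\binom{B_r+b_r-1}{b_r}$. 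Evaluating each inner sum with Lemma~\ref{trivial}, the $i=0$ term gives $q^{g-1}\sum_{b_1=0}^{g-2} q^{-b_1}\binom{B_1+b_1-1}{b_1}$ (all factors with $r>1$ collapse to $\binom{B_r-1}{0}=1$); and for $i\ge 1$ all factors with $r\notin\{1,r_i\}$ collapse to $1$, the $r=1$ factor is $\sum_{b_1=0}^{(g-2)\bmod r_i} q^{-b_1}\binom{B_1+b_1-1}{b_1}$, and the $r=r_i$ factor is $\sum_{b_{r_i}=1}^{\lfloor(g-2)/r_i\rfloor} q^{-r_ib_{r_i}}\binom{B_{r_i}+b_{r_i}-1}{b_{r_i}}$, which I rewrite as $\bigl(\sum_{b_{r_i}=0}^{\lfloor(g-2)/r_i\rfloor} q^{-r_ib_{r_i}}\binom{B_{r_i}+b_{r_i}-1}{b_{r_i}}\bigr)-1$ by splitting off the $b_{r_i}=0$ term. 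Summing these contributions reproduces exactly~(\ref{toto}).

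The only genuinely new point is that Lemma~\ref{trivial} is stated for maps into $\mathbb{N}$, whereas the relevant $f$ here takes values in $\mathbb{Q}$ because of the weights $q^{-rb_r}$; since the proof of that lemma is merely the distributivity of multiplication over addition, it holds verbatim over any commutative ring and so applies unchanged. Everything else — the inclusions $C_{I^{(i)}(\Delta_2)}\subseteq V_2$, the pairwise disjointness, and the bookkeeping of which factors reduce to $1$ — is routine and strictly parallel to the proof of Theorem~\ref{lowerboundsum1}, so I expect no real obstacle; the only mild awkwardness is organizing the case $i\ge 1$ of part~(2) cleanly.
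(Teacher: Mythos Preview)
Your proposal is correct and is exactly the approach the paper takes: the paper's entire proof reads ``similar to that of Theorem~\ref{lowerboundsum1} with the relation $\sum_{r\in\Delta_2}rm_r\le g-2$ instead of $\sum_{r\in\Delta_1}rm_r\le g-1$,'' and you have simply written out that adaptation in full. Your remark that Lemma~\ref{trivial} must be read over~$\mathbb{Q}$ rather than~$\mathbb{N}$ is a valid minor point the paper glosses over, and your handling of the $B_1=0$ edge case in part~(2) is a nice touch.
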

\begin{proof}
The proof is similar to that of Theorem \ref{lowerboundsum1} with the relation
$\sum_{r\in \Delta_2}rm_r\leq g-2$ instead of $\sum_{r\in \Delta_1}rm_r\leq g-1$. 
\end{proof}

\begin{theoreme}[Upper bound]\label{upperboundsum2}
Let $F/{\mathbb{F}}_q$ be an algebraic function field of genus $g\geq 2$.
Then 
\begin{equation}
\Sigma_2 \leq q^{g-1}\prod_{r \in \Delta_2}
\left [ \sum_{b_r=0}^{\left\lfloor \frac{g-2}{r}\right\rfloor} \frac{1}{q^{rb_r}}\left(\begin{array}{c}
B_r+b_r-1\\
b_r
\end{array}\right)\right ].
\end{equation}
\end{theoreme}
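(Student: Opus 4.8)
The plan is to mimic verbatim the argument given for Theorem \ref{upperboundsum1}, transposed from $(\Sigma_1,\Delta_1)$ to $(\Sigma_2,\Delta_2)$, carrying the weights $q^{-rb_r}$ along unchanged. First I would start from the exact formula established just above the statement, namely
$$\Sigma_2=q^{g-1}\sum_{b\in V_2}\left[\prod_{r\in\Delta_2}\frac{1}{q^{rb_r}}\binom{B_r+b_r-1}{b_r}\right],\qquad V_2=\{b=(b_r)_{r\in\Delta_2}~|~\textstyle\sum_{r\in\Delta_2}rb_r\leq g-2\}.$$

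Next I would introduce the hypercube $C_{I(\Delta_2)}$ attached to the family $I(\Delta_2)=(I_r)_{r\in\Delta_2}$ with $I_r=\{0,\dots,\lfloor\frac{g-2}{r}\rfloor\}$. The containment $V_2\subseteq C_{I(\Delta_2)}$ holds because for a fixed $r\in\Delta_2$ and any $b\in V_2$ one has $rb_r\leq\sum_{s\in\Delta_2}sb_s\leq g-2$ (all terms of the sum being nonnegative), hence $b_r\leq\lfloor\frac{g-2}{r}\rfloor$. Since every summand $\prod_{r\in\Delta_2}q^{-rb_r}\binom{B_r+b_r-1}{b_r}$ is nonnegative, enlarging the index set from $V_2$ to $C_{I(\Delta_2)}$ only increases the sum, so
$$\Sigma_2\leq q^{g-1}\sum_{b\in C_{I(\Delta_2)}}\left[\prod_{r\in\Delta_2}\frac{1}{q^{rb_r}}\binom{B_r+b_r-1}{b_r}\right].$$

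Finally I would invoke Lemma \ref{trivial} with $D=\Delta_2$ and $f(r,b_r)=q^{-rb_r}\binom{B_r+b_r-1}{b_r}$ to convert the sum over the product set $C_{I(\Delta_2)}$ into a product of one-dimensional sums, which gives exactly
$$\Sigma_2\leq q^{g-1}\prod_{r\in\Delta_2}\left[\sum_{b_r=0}^{\lfloor\frac{g-2}{r}\rfloor}\frac{1}{q^{rb_r}}\binom{B_r+b_r-1}{b_r}\right],$$
the asserted bound. There is no real obstacle: the single point deserving a line of justification is the inclusion $V_2\subseteq C_{I(\Delta_2)}$ together with the positivity of the summands, which legitimizes passing to the larger index set; the factorization step is already packaged in Lemma \ref{trivial}, and the proof is therefore a two-line variant of that of Theorem \ref{upperboundsum1}.
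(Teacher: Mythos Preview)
Your proposal is correct and is exactly the argument the paper intends: the paper's own proof consists of the single line ``The proof is similar to that of Theorem \ref{upperboundsum1}'', and what you wrote is precisely that transposition, using $V_2\subseteq C_{I(\Delta_2)}$, positivity of the summands, and Lemma \ref{trivial} with the weighted function $f(r,b_r)=q^{-rb_r}\binom{B_r+b_r-1}{b_r}$.
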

\begin{proof}
The proof is similar to that of Theorem \ref{upperboundsum1}
\end{proof}

\begin{remarque}\label{remQsum2}
The quantity
\begin{equation}\label{Qrs}
Q_{r,s}=\sum_{k=0}^{s} \frac{1}{q^{rk}}
\left(\begin{array}{c}
B_{r}+k-1\\
k
\end{array}\right)
\end{equation}
plays an important role in estimating $\Sigma_2$. Indeed, by Theorem \ref{lowerboundsum2}
and Theorem \ref{upperboundsum2} we have
$$q^{g-1}\prod_{r \in \Delta_2} Q_{r,m_r} \leq \Sigma_2 \leq 
q^{g-1}\prod_{r \in \Delta_2} Q_{r,\left\lfloor \frac{g-2}{r} \right\rfloor}$$
and
$$q^{g-1}\left(Q_{1,g-2}+\sum_{i=1}^u Q_{r_i,\left\lfloor \frac{g-2}{r_i} \right\rfloor}\right)
\leq \Sigma_2 \leq  q^{g-1}\prod_{r \in \Delta_2} Q_{r,\left\lfloor \frac{g-2}{r} \right\rfloor}.$$
\end{remarque}

 \begin{remarque}
  If $s=0$ then $Q_{r,s}=1$.
 \end{remarque}

The estimation of $Q_{r,s}$ leads to some interesting lower and upper bounds for $\Sigma_2$
obtained by injecting the bounds on $Q_{r,s}$ in the formulas of Theorem \ref{lowerboundsum2}.

\begin{proposition}\label{BoundsSigma2}
Let $m=(m_r)_{r\in \Delta_2}$ be a finite sequence of integers such that $m_r \geq 0$ and
$\sum_{r\in\Delta_2}rm_r \leq g-2$. Let $r_1,\cdots,r_u$ be elements of $\Delta_2'$.
Then the following inequality holds:
$$\Sigma_{2,inf}\leq \Sigma_2 \leq \Sigma_{2,sup}$$ 
where
$$\Sigma_{2,inf}=
\left\{
\begin{array}{lcl}
q^{g-1}\prod_{r \in \Delta_2} Q_{r,m_r} \\
\hbox{or} \\
q^{g-1}\left(Q_{1,g-2}+\sum_{i=1}^u Q_{r_i,\left\lfloor \frac{g-2}{r_i} \right\rfloor}\right) 
\end{array}
\right.$$
and 
$$\Sigma_{2,sup}=q^{g-1}\prod_{r \in \Delta_2} Q_{r,\left\lfloor \frac{g-2}{r} \right\rfloor}.$$
Moreover, 
for any $r\geq 1$ and $s\geq 0$ the following holds:
$$Q_{r,s}=\left( \frac{q^r}{q^r-1}\right)^{B_r}-B_r\left (
\begin{array}{c}
 B_r+s\\
 B_r
\end{array}
\right ) \int_{0}^{\frac{1}{q^r}} \frac{(\frac{1}{q^r}-t)^s}{(1-t)^{B_r+s+1}}{\rm d}t.$$
 \end{proposition}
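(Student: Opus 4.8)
The inequality part of the statement requires no new work: rewriting the lower bounds of Theorem \ref{lowerboundsum2} and the upper bound of Theorem \ref{upperboundsum2} in terms of the quantity $Q_{r,s}$ of (\ref{Qrs}) gives exactly the chain $\Sigma_{2,inf}\le\Sigma_2\le\Sigma_{2,sup}$, as already recorded in Remark \ref{remQsum2}; so I would simply invoke those results. Thus the only genuine point is the closed form for $Q_{r,s}$, and this is what I would focus on.

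To prove the identity, write $B=B_r$ and $x=1/q^{r}$, so that $0<x<1$ and, by (\ref{Qrs}),
\[
Q_{r,s}=\sum_{k=0}^{s}\binom{B+k-1}{k}x^{k}.
\]
The first step is to recognize this as the $s$-th partial sum of the binomial series $\sum_{k\ge 0}\binom{B+k-1}{k}x^{k}=(1-x)^{-B}$, which converges since $|x|<1$, and to note that $(1-x)^{-B}=\bigl(q^{r}/(q^{r}-1)\bigr)^{B_r}$. Hence
\[
Q_{r,s}=\Bigl(\tfrac{q^{r}}{q^{r}-1}\Bigr)^{B_r}-\sum_{k=s+1}^{\infty}\binom{B+k-1}{k}x^{k},
\]
and it remains to identify the tail with the integral appearing in the statement. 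For this I would apply Taylor's formula with integral remainder to $f(t)=(1-t)^{-B}$ on the interval $[0,x]\subset[0,1)$, on which $f$ is $C^{\infty}$. Since $f^{(k)}(0)/k!=\binom{B+k-1}{k}$, the remainder after the term of order $s$ is
\[
\sum_{k=s+1}^{\infty}\binom{B+k-1}{k}x^{k}=f(x)-\sum_{k=0}^{s}\frac{f^{(k)}(0)}{k!}x^{k}=\frac{1}{s!}\int_{0}^{x}(x-t)^{s}f^{(s+1)}(t)\,{\rm d}t.
\]
A direct computation gives $f^{(s+1)}(t)=B(B+1)\cdots(B+s)\,(1-t)^{-(B+s+1)}=\dfrac{(B+s)!}{(B-1)!}(1-t)^{-(B+s+1)}$, and the leading constant simplifies, using $B!=B\,(B-1)!$, to
\[
\frac{1}{s!}\cdot\frac{(B+s)!}{(B-1)!}=B\cdot\frac{(B+s)!}{B!\,s!}=B\binom{B+s}{B}.
\]
Substituting this back and restoring $x=1/q^{r}$ yields precisely the announced formula; the case $s=0$ (where $Q_{r,0}=1$) is a trivial check of the same computation.

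The computation is entirely routine; the only points deserving a word of justification are the termwise passage to the binomial series and the use of the integral form of the Taylor remainder, both of which are legitimate because the series converges absolutely on the compact interval $[0,x]$ lying strictly inside its disk of convergence. If one prefers a fully self-contained argument, the remainder identity
\[
\sum_{k=s+1}^{\infty}\binom{B+k-1}{k}x^{k}=B\binom{B+s}{B}\int_{0}^{x}\frac{(x-t)^{s}}{(1-t)^{B+s+1}}\,{\rm d}t
\]
can instead be established by induction on $s$, the inductive step amounting to a single integration by parts; I would mention this as an alternative. In either case I expect no real obstacle — the main care is merely bookkeeping of the binomial constant $B\binom{B+s}{B}=(B+s)!/\bigl((B-1)!\,s!\bigr)$.
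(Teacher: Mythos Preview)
Your proposal is correct and follows essentially the same route as the paper: the inequalities are read off from Theorems \ref{lowerboundsum2}--\ref{upperboundsum2} via Remark \ref{remQsum2}, and the closed form for $Q_{r,s}$ is obtained by writing it as the $s$-th partial sum of the binomial series for $(1-X)^{-B_r}$ and applying Taylor's formula with integral remainder to identify the tail. Your write-up is in fact more detailed than the paper's (explicit computation of $f^{(s+1)}$, justification of convergence, and the alternative induction argument), but the underlying idea is identical.
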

\begin{proof}
We set
$$S_r(X)=\sum_{k=0}^{\infty}X^k \left (
\begin{array}{c}
 B_r+k-1\\
k
\end{array}
\right ), \quad T_r(X,s)=\sum_{k=0}^{s}X^k \left (
\begin{array}{c}
 B_r+k-1\\
k
\end{array}
\right ), \textrm{ and }  $$ 
$$ R_r(X,s)=\sum_{k=s+1}^{\infty}X^k \left (
\begin{array}{c}
 B_r+k-1\\
k
\end{array}
\right ).$$
Let us remark that
$$Q_{r,s}=T_r\left(\frac{1}{q^r},s\right) \quad \textrm{ and }\quad  S_r(X)=\frac{1}{(1-X)^{B_r}}$$
which converges for $|X|<1$ and moreover
$$S_r(X)=T_r(X,s)+R_r(X,s).$$
By the Taylor Formula, we get
$$R_r(X,s)=B_r\left (
\begin{array}{c}
 B_r+s\\
 B_r
\end{array}
\right ) \int_{0}^{X} \frac{(X-t)^s}{(1-t)^{B_r+s+1}}{\rm d}t.
$$
Then
$$Q_{r,s}=\left( \frac{q^r}{q^r-1}\right)^{B_r}-B_r\left (
\begin{array}{c}
 B_r+s\\
 B_r
\end{array}
\right ) \int_{0}^{\frac{1}{q^r}} \frac{(\frac{1}{q^r}-t)^s}{(1-t)^{B_r+s+1}}{\rm d}t.$$
\end{proof}

\begin{remarque}
  Note that a lower bound of $\Sigma_2$ always can be computed in practice directly from 
  Proposition  \ref{BoundsSigma2} because it is sufficient to know a subset of $\Delta_1$ 
  and the corresponding $B_r$ (or a lower bound for each corresponding $B_r$)
  while the upper bound can not be always computed easily because it need to know 
  precisely the set $\Delta_1$ and all the $B_r$ of the curve which is difficult when the genus 
is large for example.
 \end{remarque}

\begin{remarque} \label{remarquesum2}
Note that if we specialize to the case $r=1$ without knowing information about the other degrees, we obtain 
$\Sigma_{2,inf}=q^{g-1}Q_{1,g-2}$ and thus the quantity $\Sigma_{2,inf}$ is equal to 
$\Sigma_{2}$ in \cite[Formula (2.6)]{baro5} by Remark \ref{remQsum2} and Proposition \ref{BoundsSigma2}. 
\end{remarque}

\section{Bounds on the class number}\label{sec:class}
In this section, using the lower and upper bounds obtained in the previous section, 
we derive lower ands upper bounds on the class number of an algebraic function field.
 
Let $F/{\mathbb{F}}_q$ be an algebraic function field defined over 
${\mathbb{F}}_q$ of genus $g\geq 2$. For any $r$ such that $1 \leq r \leq g-1$
let $B_r$ be the number of places of degree $r$ of $F/{\mathbb{F}}_q$ and $h$ 
the class number of $F/{\mathbb{F}}_q$.

It is known (cf. \cite{lamd}) that the class number $h$ is
$$h=\frac{S(F/{\mathbb F}_q)}{R(F/{\mathbb F}_q)}=\frac{\Sigma_1+\Sigma_2}{R(F/{\mathbb F}_q)}.$$
Then, we have

\begin{theoreme}\label{MainTheo}
Let $F/{\mathbb{F}}_q$ be an algebraic function field defined over 
${\mathbb{F}}_q$ of genus $g\geq 2$ and $h$ be its class number. 
For any set $D_1 \subseteq \Delta_1$, any set $D_2 \subseteq \Delta_2$,
any finite set of integers $l=\{l_r\}_{r\in \Delta_1 }$ and any finite set of integers
$m=\{m_r\}_{r\in \Delta_2}$
such that 
$$l_r \geq 0, \hbox{  } \sum_{r\in \Delta_1} rl_r \leq g-1$$
and
$$m_r \geq 0, \hbox{  } \sum_{r\in \Delta_2} rm_r \leq g-2,$$
we have the following
\begin{equation*}
 \frac{(q-1)^2(\Sigma_{1,inf}+\Sigma_{2,inf})}{(g+1)(q+1)-B_1}\leq h 
\leq \frac{(q+1)^2(\Sigma_{1,sup}+\Sigma_{2,sup})}{(g+1)(q+1)-B_1},
\end{equation*}
where
$$\Sigma_{1,inf}=
\left\{
\begin{array}{lcl}
(a) \quad \prod_{r\in D_1} \left(\begin{array}{c}
 B_r+l_r\\
l_r
\end{array}\right) \\
\hbox{or} \\
(b) \quad \left(\begin{array}{c}
 B_1+g-1\\
 g-1
\end{array}\right)+\\ \sum_{r\in D_1} \left(\left(\begin{array}{c}
 B_{r}+\left\lfloor \frac{g-1}{r}\right\rfloor \\
\left\lfloor \frac{g-1}{r}\right\rfloor
\end{array}\right)-1\right)
\left(\begin{array}{c}
 B_{1}+ \left(\strut (g-1)\mod r\right)  \\
(g-1)\mod r
\end{array}  \right )\\ \hbox{ with } B_1\geq 1,
\end{array}
\right .
$$
$$\Sigma_{1,sup}=\prod_{r\in \Delta_1} \left(\begin{array}{c}
B_r+\left\lfloor \frac{g-1}{r}\right\rfloor\\
\left\lfloor \frac{g-1}{r}\right\rfloor
\end{array}\right),
$$
and
$$\Sigma_{2,inf}=
\left\{
\begin{array}{lcl}
(c) \quad q^{g-1}\prod_{r\in D_2} Q_{r,m_r} \\
\hbox{or} \\
(d) \quad q^{g-1}\left(Q_{1,g-2}+\sum_{r\in D_2} Q_{r,\left\lfloor \frac{g-2}{r} \right\rfloor}\right) 
\hbox{ with } B_1\geq 1,
\end{array}
\right .
$$
and
$$\Sigma_{2,sup}=q^{g-1}\prod_{r\in \Delta_2} Q_{r,\left\lfloor \frac{g-2}{r} \right\rfloor}$$
where the quantity $Q_{r,s}$ is defined and estimated in Proposition \ref{BoundsSigma2}. 
\end{theoreme}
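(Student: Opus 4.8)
The plan is to assemble Theorem \ref{MainTheo} directly from the fundamental equality (\ref{mainformula}), namely $h = S(F/{\mathbb F}_q)/R(F/{\mathbb F}_q) = (\Sigma_1+\Sigma_2)/R(F/{\mathbb F}_q)$, together with the two-sided bound on $R(F/{\mathbb F}_q)$ given in (\ref{R2}) and (\ref{R3}) and the bounds on $\Sigma_1$ and $\Sigma_2$ proved in Section \ref{sec:lower}. Everything needed is already in place; the proof is essentially a combination of already-established inequalities, with the only care being the direction of each inequality and the sign of the quantities involved.

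First I would record that $\Sigma_1, \Sigma_2 > 0$ and that, since $g\geq 2$, the Weil bounds force $(g+1)(q+1) - B_1 > 0$ (indeed $B_1 \leq q+1 \leq (g+1)(q+1)$, with strict inequality because $g\geq 2$), so that all the fractions written are legitimate and order-preserving. Then from (\ref{R2}) we get
$$
R(F/{\mathbb F}_q) \leq \frac{(g+1)(q+1)-B_1}{(q-1)^2},
$$
hence
$$
h = \frac{\Sigma_1+\Sigma_2}{R(F/{\mathbb F}_q)} \geq \frac{(q-1)^2(\Sigma_1+\Sigma_2)}{(g+1)(q+1)-B_1},
$$
and symmetrically from (\ref{R3}),
$$
h \leq \frac{(q+1)^2(\Sigma_1+\Sigma_2)}{(g+1)(q+1)-B_1}.
$$

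Next I would plug in the bounds on the two sums. For the lower bound on $h$, I use $\Sigma_1 \geq \Sigma_{1,inf}$ and $\Sigma_2 \geq \Sigma_{2,inf}$: choice (a) for $\Sigma_{1,inf}$ is exactly part (1) of Theorem \ref{lowerboundsum1} applied to the subfamily $D_1 \subseteq \Delta_1$ (extending the sequence $l$ by zeros outside $D_1$, which is harmless since $\binom{B_r+0}{0}=1$ and the constraint $\sum_{r\in\Delta_1} rl_r\leq g-1$ holds by hypothesis), and choice (b) is part (2) of the same theorem with $\{r_1,\dots,r_u\}=D_1\setminus\{1\}$ under the standing assumption $B_1\geq 1$; choices (c) and (d) for $\Sigma_{2,inf}$ come in the same way from Theorem \ref{lowerboundsum2} (parts (1) and (2) respectively), using $D_2\subseteq\Delta_2$ and, in case (d), $B_1\geq 1$, and rewriting the multiplicative part via $Q_{r,m_r}$ as in Remark \ref{remQsum2} and Proposition \ref{BoundsSigma2}. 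For the upper bound on $h$, I use $\Sigma_1 \leq \Sigma_{1,sup}$ from Theorem \ref{upperboundsum1} and $\Sigma_2 \leq \Sigma_{2,sup}$ from Theorem \ref{upperboundsum2}, again rephrased through $Q_{r,\lfloor (g-2)/r\rfloor}$. Combining these with the displayed inequalities for $h$ in terms of $R$ gives the claimed double inequality.

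There is no real obstacle here: the theorem is a packaging result. The one point that deserves a sentence of justification is the reduction from the full index set $\Delta_1$ (resp. $\Delta_2$) appearing in Theorems \ref{lowerboundsum1}--\ref{lowerboundsum2} to an arbitrary subset $D_1$ (resp. $D_2$) — this is legitimate precisely because each omitted factor is a binomial coefficient $\binom{B_r+0}{0}=1$ (or $Q_{r,0}=1$), so restricting the hypercube to a sub-box only decreases the lower estimate, which is the direction we need; the only constraint to check is that the truncated sequences $l,m$ still satisfy $\sum rl_r\leq g-1$ and $\sum rm_r\leq g-2$, which is part of the hypotheses. The upper bounds, by contrast, genuinely use the entire sets $\Delta_1,\Delta_2$, consistent with the remark in the text that the upper bound cannot in general be computed without full knowledge of the curve. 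I would close by remarking that the positivity of $(g+1)(q+1)-B_1$ and of the $\Sigma_i$ is what guarantees the chain of inequalities is order-preserving throughout.
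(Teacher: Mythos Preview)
Your proof is correct and matches the paper's approach exactly: the theorem is a direct assembly of the identity $h=(\Sigma_1+\Sigma_2)/R(F/{\mathbb F}_q)$ with the bounds (\ref{R2})--(\ref{R3}) on $R$ and the bounds on $\Sigma_1,\Sigma_2$ from Theorems \ref{lowerboundsum1}--\ref{upperboundsum2} and Proposition \ref{BoundsSigma2}. One small correction: your parenthetical justification ``$B_1 \leq q+1$'' is false in general (maximal curves violate it); the Weil bound gives $B_1 \leq q+1+2g\sqrt{q}$, and then $(g+1)(q+1)-B_1 \geq g(\sqrt{q}-1)^2 > 0$ since $q\geq 2$ and $g\geq 2$, which is the estimate you actually need.
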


Note that the formulas (a) and (c) can be used in all the cases and (b) and (d) only if $B_1\geq 1$. 
Note also that the quantities $\Sigma_{1,inf}$ and $\Sigma_{2,inf}$ can be chosen respectively among 
two bounds whose the quality depends on the studied situation.

The following theorem is a direct consequence of 
Theorem \ref{MainTheo} and Proposition \ref{BoundsSigma2} (a).

\begin{theoreme}\label{mainmain}
Let $F/{\mathbb{F}}_q$ be an algebraic function field defined over 
${\mathbb{F}}_q$ of genus $g\geq 2$ and $h$ its class number. 
For any $r$ such that $1 \leq r \leq g-1$,
let $B_r$ be the number of places of degree $r$ of $F/{\mathbb{F}}_q$.

For any set $D_1$, any set $D_2$,
any finite set of integers $l=\{l_r\}_{r\in D_1 }$ and any finite set of integers
$m=\{m_r\}_{r\in D_2}$
such that
\begin{enumerate}
 \item $D_1 \subseteq \{1,\cdots,g-1\}$;
 \item for any $r \in D_1$, we have $B_r \geq 1$;
 \item $D_2 \subseteq \{1,\cdots,g-2\}$;
 \item for any $r \in D_2$, we have $B_r \geq 1$;
 \item $l_r \geq 0, \hbox{  } \sum_{r\in D_1} rl_r \leq g-1$;
 \item $m_r \geq 0, \hbox{  } \sum_{r\in D_2} rm_r \leq g-2$.
\end{enumerate}
Then $h \geq h_{BRT}$ where
\begin{equation}\label{HBRT}
\begin{split}
h_{BRT}=
\frac{(q-1)^2}{(g+1)(q+1)-B_1} 
\left(
\prod_{r\in D_1} \left(\begin{array}{c}
 B_r+l_r\\
l_r
\end{array}\right) \right .
 \\
\left .+q^{g-1}
\prod_{r\in D_2}\left [ \left( \frac{q^r}{q^r-1}\right)^{B_r} 
-B_r\left(
\begin{array}{c}
 B_r+m_r\\
 B_r
\end{array}
\right) \int_{0}^{\frac{1}{q^r}} \frac{(\frac{1}{q^r}-t)^{m_r}}{(1-t)^{B_r+m_r+1}}\rm dt. \right ]
\right).
\end{split}
\end{equation}
\end{theoreme}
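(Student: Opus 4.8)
The plan is to derive Theorem \ref{mainmain} as a straightforward specialization of Theorem \ref{MainTheo}, combined with the integral representation of $Q_{r,s}$ given in Proposition \ref{BoundsSigma2}. The key observation is that hypotheses (1)--(6) of Theorem \ref{mainmain} are exactly what is needed to guarantee that the sets $D_1$ and the tuple $l=\{l_r\}_{r\in D_1}$ (resp. $D_2$ and $m=\{m_r\}_{r\in D_2}$) satisfy the hypotheses of Theorem \ref{MainTheo}: indeed condition (2) says every $r\in D_1$ has $B_r\geq 1$, hence $D_1\subseteq\{1,\dots,g-1\}$ with $B_r\geq1$ forces $D_1\subseteq\Delta_1$ by the definition (\ref{Delta1}), and similarly (4) together with (3) forces $D_2\subseteq\Delta_2$ by (\ref{Delta2}). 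Conditions (5) and (6) are precisely the admissibility conditions $\sum_{r\in D_1}rl_r\leq g-1$ and $\sum_{r\in D_2}rm_r\leq g-2$ appearing in Theorem \ref{MainTheo} (one extends $l$ and $m$ by zero outside $D_1$, $D_2$ respectively, which does not change the sums).

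With these identifications in place, I would apply the lower bound of Theorem \ref{MainTheo}, choosing the first option $(a)$ for $\Sigma_{1,inf}$ and the first option $(c)$ for $\Sigma_{2,inf}$:
\begin{equation*}
h \geq \frac{(q-1)^2}{(g+1)(q+1)-B_1}\left(\prod_{r\in D_1}\binom{B_r+l_r}{l_r} + q^{g-1}\prod_{r\in D_2} Q_{r,m_r}\right).
\end{equation*}
Note that this choice of options avoids any reliance on $B_1\geq1$, consistently with the fact that Theorem \ref{mainmain} does not assume $B_1\geq1$ (if $1\notin D_1\cup D_2$ the denominator still makes sense and $B_1$ is simply whatever value it takes). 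The final step is then purely a substitution: replace each factor $Q_{r,m_r}$ by the closed form given at the end of Proposition \ref{BoundsSigma2},
\begin{equation*}
Q_{r,m_r}=\left(\frac{q^r}{q^r-1}\right)^{B_r}-B_r\binom{B_r+m_r}{B_r}\int_0^{1/q^r}\frac{(1/q^r-t)^{m_r}}{(1-t)^{B_r+m_r+1}}\,{\rm d}t,
\end{equation*}
which yields exactly the expression $h_{BRT}$ in (\ref{HBRT}).

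There is essentially no genuine obstacle here; the result is a bookkeeping corollary. The only point requiring a small amount of care is the interplay between the \emph{two} displayed admissibility conditions in Theorem \ref{MainTheo} (which are stated for sequences indexed by all of $\Delta_1$ and $\Delta_2$) and the conditions (5), (6) of Theorem \ref{mainmain} (stated for sequences indexed by $D_1\subseteq\Delta_1$ and $D_2\subseteq\Delta_2$). I would make explicit that setting $l_r=0$ for $r\in\Delta_1\setminus D_1$ and $m_r=0$ for $r\in\Delta_2\setminus D_2$ produces admissible full sequences whose associated bound $(a)$, resp. $(c)$, collapses — using $\binom{B_r+0}{0}=1$ and $Q_{r,0}=1$ — to the products over $D_1$, resp. $D_2$, appearing in the statement. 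Once that reduction is spelled out, the rest is an immediate invocation of Theorem \ref{MainTheo} and Proposition \ref{BoundsSigma2}, so the proof is as short as "this is a direct consequence", exactly as announced just before the statement.
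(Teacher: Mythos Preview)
Your proposal is correct and matches the paper's own argument exactly: the paper states that Theorem \ref{mainmain} ``is a direct consequence of Theorem \ref{MainTheo} and Proposition \ref{BoundsSigma2}'' and gives no further proof. Your added bookkeeping (extending $l$ and $m$ by zero outside $D_1$, $D_2$ and noting $\binom{B_r}{0}=1$, $Q_{r,0}=1$) is a helpful clarification of that one-line deduction, not a different approach.
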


\begin{remarque}\label{comparaisontheorique}
Note that if we specialize to the case $r=1$ without knowing information about other degrees, 
the bound of Theorem \ref{MainTheo} is exactly the same 
as the bound obtained in  \cite[Theorem 3.1]{baro5} because of Remarks \ref{remarquesum1}  and 
\ref{remarquesum2}.      
\end{remarque}
 
\section{ Lower bounds on the class number in towers } 

In this section we study the class number of few recursive towers ${\mathcal F}/\mathbb{F}_q=(F_k/{\mathbb F}_q)$ 
defined over different finite fields $\mathbb{F}_q$. In particular, we consider towers with
$B_{r}(F_k/{\mathbb F}_q)>0$ for various $r\geq1$.  
We study precisely the following parameters of each step $F_k/{\mathbb F}_q$ of those towers:  
the genus, the number of places of certain degrees, and the class number. 
Moreover, we study the possibly existence of at least a place of degree one.
Let us first recall few definitions:

\begin{definition}\label{tower}

\noindent\begin{enumerate} 
\item   
A tower over ${\mathbb F}_q$ is an infinite sequence ${\mathcal F}/{\mathbb F}_q=(F_k/{\mathbb F}_q)_{k\geq 0}$ 
of function fields $F_k/{\mathbb F}_q$ with the following properties:
\begin{itemize}
\item[(a)] for all $k\geq 0$, the field ${\mathbb F}_q$ is algebraically closed in $F_k$ 
and each extension $F_{k+1}/F_k$ is finite separable such that $F_k\subsetneqq F_{k+1}$.
\item[(b)]  The genera $g(F_k)\to \infty$ as $k\to \infty$. 
\end{itemize}
\item Let ${\mathcal F}/{\mathbb F}_q=(F_k/{\mathbb F}_q)_{k\geq 0}$ be a tower and 
$f(X,Y)\in {\mathbb F}_q[X,Y]$ be a non-constant polynomial. Suppose that there exist elements 
$x_k\in F_k$ (for $k\geq 0$) such that
$$ F_{k+1}=F_k(x_{k+1}) \textrm{ with } f(x_k,x_{k+1})=0 \textrm{ for all } n\geq 0.$$
Then we say that the tower ${\mathcal F}/{\mathbb F}_q$ is recursively defined by the polynomial $f(X,Y)$. 
In this case, we call that ${\mathcal F}/{\mathbb F}_q$ is a recursive tower.
\item Let ${\mathcal F}/{\mathbb F}_q=(F_k/{\mathbb F}_q)_{k\geq 0}$ be a tower and $E$ be 
a finite separable extension of $F_0$. Suppose that ${\mathcal E}/{\mathbb F}_q=(E_k/{\mathbb F}_q)_{k\geq 0}$, 
with $E_k:=EF_k$ is a  tower such that $E/F_0$ and $F_k/F_0$ are linearly disjoint for all $k\geq 0$. 
Then we call ${\mathcal E}/{\mathbb F}_q$ the composite tower of ${\mathcal F}/{\mathbb F}_q$ with $E$. 
When ${\mathcal F}/{\mathbb F}_q$ and $E$ are clear, we just say that ${\mathcal E}/{\mathbb F}_q$ is a composite tower. 
\item We will call a tower ${\mathcal F}/{\mathbb F}_q=(F_k/{\mathbb F}_q)_{k\geq 0}$ a quadratic 
tower if $[F_{k+1}:F_k]=2$ for all $k\geq 0$.  
\end{enumerate}
\end{definition}

\begin{remarque}
Let ${\mathcal F}/{\mathbb F}_q=(F_k/{\mathbb F}_q)_{k\geq 0}$ be a recursive tower 
defined by the polynomial $f(X,Y)\in {\mathbb F}_q[X,Y]$ and ${\mathcal E}/{\mathbb F}_q$ 
be a composite tower of ${\mathcal F}/{\mathbb F}_q$ with an extension $E$ of $F_0$. 
Then ${\mathcal E}/{\mathbb F}_q$ is a recursive tower defined by the same polynomial $f(X,Y)$.  
\end{remarque}

Let ${\mathbb P}(F)$ be the set of places of $F/{\mathbb{F}}_q$ and let $v_P$ be the discrete 
valuation associated to the place $P\in {\mathbb P}(F)$. 
Let $E/F$ be a finite separable extension of the function field $F$ over ${\mathbb F}_q$, 
$P\in \mathbb{P}(F)$, $Q\in \mathbb{P}(E)$ 
be a place lying above $P$. Then we write $Q|P$ and denote $e(Q|P)$ the ramification index of $Q|P$, 
$f(Q|P)$ the relative degree of $Q|P$, $d(Q|P)$ the different exponent of $Q|P$ and $Diff(E/F)$ the different of $E/F$. 
We denote by $(x=a)$  the place of the rational function field ${\mathbb F}_q(x)$ 
which is a zero of $x-a$ for $a\in {\mathbb F}_q$, 
or the pole of $x$ for  $a=\infty$.

\begin{definition}
Let ${\mathcal F}=(F_n)_{n\geq 0}$ be a tower over ${\mathbb F}_q$. Then
\[R({\mathcal F}):=\{P\in  {\mathbb P}(F_0) \mid P \textrm{ is ramified in } F_k \textrm{ for some } k\geq 0 \}\]
is called the ramification locus of ${\mathcal F}$. The set
\[Split({\mathcal F}):=\{P\in  {\mathbb P}(F_0) \mid P \textrm{ splits completely in } F_k  \textrm{ for all $k\geq 1$}\}\]
is called the splitting locus  of ${\mathcal F}$. 
\end{definition}

For a tower ${\mathcal F}/{\mathbb F}_q$, let $N=N({\mathcal F}/\mathbb{F}_q)$ be a set of integers $r\geq 1$ 
such that $\beta_{r}({\mathcal F}/{\mathbb F}_q)>0$.

\subsection{Case $N=\{r\}$}

In this section we give some examples of towers having only one positive invariant $\beta_r$ or  
having only one known positive invariant $\beta_r$ for any  $r\geq 1$. These examples are obtained 
from asymptotically good towers with respect to the rational places over ${\mathbb F}_{q^r}$, i.e., with $\beta_1>0$ 
or from the descent of such towers on the ground field ${\mathbb F}_q$.

\subsubsection{Example 1}

This example concerns the first tower  of Garcia-Stichtenoth \cite{gast} reaching 
the Drinfeld-Vladut bound  over ${\mathbb F}_{q^r}$ where $q^r$ is a square.  
This tower is here denoted by $\mathcal{H}/{\mathbb F}_{q^r}$.
We study bounds of the class number of all the steps of this tower as well as those of 
its descent tower $\mathcal{F}/{\mathbb F}_q$ to ${\mathbb F}_{q}$, which reaches the Drinfeld-Vladut bound of order $r$. 
Note that in these two cases, we know the exact value of the genus of each step. 
But in the case of the tower $\mathcal{H}/{\mathbb F}_{q^r}$, it is interesting to see that 
we obtain estimations of class number from the value (or from lower bound) of the number of places of degree one. 
However, in the case of the tower $\mathcal{F}/{\mathbb F}_q$, we can obtain estimations of class numbers from only
the value (or from lower bound) of the quantity $\sum_{i\mid r}iB_i(F_k)$. 
These two towers illustrate two different 
situations where one can estimate class numbers.

\begin{proposition}\label{optimal.exm}
Let $q$ be a prime power and $r\geq 1$ be an integer such that $q^r$ is a square. 
Consider the tower $\mathcal{H}/{\mathbb F}_{q^r}=(H_k/{\mathbb F}_{q^r})_{k> 0}$ 
defined recursively by the polynomial
\begin{equation}\label{GStower} 
f(X,Y)=Y^{q^{r/2}}X^{q^{r/2}-1}+Y-X^{q^{r/2}} \in {\mathbb F}_q[X,Y]
\end{equation}
and the rational function field $F_0=\mathbb{F}_{q}(x_0)$.
Then the tower $\mathcal{H}/{\mathbb F}_{q^r}$ and its descent $\mathcal{F}/{\mathbb F}_q=(F_k/{\mathbb F}_q)_{k\geq 0}$  
which is such that $\mathcal{H}/{\mathbb F}_{q^r}=\mathcal{F}/{\mathbb F}_q\otimes_{{\mathbb F}_q} {\mathbb F}_{q^r}$ 
verify the following
properties:
\begin{itemize}
 \item[(i)] $B_1(F_k)\geq 1$;
 \item[(ii)] a) if $q\equiv 1 \mod 2$ then for all  $k\geq 2$, 
 $$\sum_{i\mid r}iB_i(F_k)=(q^r-1)q^{rk/2}+2q^{r/2}=B_1(H_k).$$\\
 b) if $q\equiv 0 \mod 2$ then $$\sum_{i\mid r}iB_i(F_2)=(q^r-1)(q^r)+2q^{r/2}=B_1(H_2),$$
    $$\sum_{i\mid r}iB_i(F_3)=(q^r-1)q^{3r/2}+q^r+q^{r/2}=B_1(H_3),$$
    $$\sum_{i\mid r}iB_i(F_k)=(q^r-1)q^{rk/2}+2q^r = B_1(H_k) \hbox{ for all }k\geq 4.$$
  \item[(iii)] The genus $g(H_k)=g(F_k)$ and 
$$
g(F_k)=\left\{
\begin{array}{ll}
 (q^{r/2}+1)q^{rk/2}-(q^{r/2}+2)q^{rk/4}+1 & \hbox{if }  k \hbox{ even}\\
 (q^{r/2}+1)q^{rk/2}-\frac{1}{2}(q^r+3q^{r/2}+1)q^{(rk-r)/4}+1& \hbox{if } k \hbox{ odd}.                         
\end{array}\right .
$$
\item[(iv)]  $\beta_r({\mathcal F}/{\mathbb F}_{q})=\frac{q^{r/2}-1}{r}$.
\end{itemize}
\end{proposition}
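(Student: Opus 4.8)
The plan is to recover all four assertions from the well-documented structure of the Garcia--Stichtenoth tower $\mathcal{H}/{\mathbb F}_{q^r}$ (defined by \eqref{GStower}) together with the descent relation $\mathcal{H}/{\mathbb F}_{q^r}=\mathcal{F}/{\mathbb F}_q\otimes_{{\mathbb F}_q}{\mathbb F}_{q^r}$. First I would record the basic facts from \cite{gast}: the tower is asymptotically optimal over ${\mathbb F}_{q^r}$ (it attains the Drinfeld--Vladut bound), its ramification locus and splitting locus over $F_0=\mathbb{F}_q(x_0)$ are explicitly known, and the genus $g(H_k)$ is given by the closed formula reproduced in (iii). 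Since constant field extension does not change the genus, $g(F_k)=g(H_k)$, which is exactly (iii); this part is essentially a citation. For (i), I would exhibit a rational place of $F_k$: the places of $F_0$ lying in the splitting locus of the descended tower are rational over ${\mathbb F}_q$ and split completely, so each $F_k$ has at least one (indeed many) degree-one places, giving $B_1(F_k)\geq 1$.

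Next I would turn to the heart of the combinatorial claim (ii), which relates $\sum_{i\mid r} i B_i(F_k)$ to $B_1(H_k)$. The key identity is the standard one for constant field extensions: if $H_k=F_k{\mathbb F}_{q^r}$, then a place $P$ of $F_k$ of degree $d$ splits in $H_k$ into $\gcd(d,r)$ places of degree $d/\gcd(d,r)$; counting degree-one places of $H_k$ one gets
\[
B_1(H_k)=\#\{Q\in{\mathbb P}(H_k):\deg Q=1\}=\sum_{i\mid r} i\, B_i(F_k),
\]
because a degree-one place of $H_k$ lies over a degree-$i$ place of $F_k$ for some $i\mid r$, and each such place of $F_k$ contributes exactly $i$ of them. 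So (ii) reduces to computing $B_1(H_k)$ precisely for the Garcia--Stichtenoth tower over ${\mathbb F}_{q^r}$. Here I would invoke the explicit description of completely split places together with a careful accounting of the places over $(x_0=0)$ and $(x_0=\infty)$, which behave differently according to the parity of $q$ (this is the source of the case split between (ii)(a) and (ii)(b), and the anomalous small values $k=2,3$ in characteristic $2$). The main term $(q^r-1)q^{rk/2}$ counts the "generic" rational places coming from the splitting locus, and the correction terms $2q^{r/2}$, resp. $q^r+q^{r/2}$, $2q^r$, come from the ramified/boundary places; I expect this bookkeeping — matching the literature's place count for each congruence class and each small $k$ — to be the main obstacle, as it requires tracing through the ramification data of \cite{gast} rather than invoking a black box.

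Finally, (iv) follows by passing to the limit: by definition $\beta_r({\mathcal F}/{\mathbb F}_q)=\lim_{k\to\infty} B_r(F_k)/g(F_k)$, and more generally $\frac{1}{r}\lim_{k\to\infty}\sum_{i\mid r} iB_i(F_k)/g(F_k)=\lim_{k\to\infty} B_1(H_k)/g(H_k)$, which is the Tsfasman--Vladut invariant $\beta_1(\mathcal H/{\mathbb F}_{q^r})=q^{r/2}-1$ since $\mathcal H$ attains the Drinfeld--Vladut bound over the square field $q^r$. Dividing the leading term $(q^r-1)q^{rk/2}$ of $B_1(H_k)$ by the leading term $(q^{r/2}+1)q^{rk/2}$ of $g(F_k)$ gives $\frac{q^r-1}{q^{r/2}+1}=q^{r/2}-1$, hence $\beta_r({\mathcal F}/{\mathbb F}_q)=\frac{q^{r/2}-1}{r}$, using that $B_i(F_k)$ for proper divisors $i\mid r$, $i<r$, grows strictly more slowly (it is $O(q^{ik/2})$) and therefore does not contribute to the limit. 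This last step is a routine asymptotic computation once (ii) and (iii) are in hand.
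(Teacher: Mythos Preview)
Your overall strategy---reduce everything to the known data of the Garcia--Stichtenoth tower over ${\mathbb F}_{q^r}$ and the standard constant-field-extension identities---matches the paper's, and parts (ii) and (iii) are handled exactly as the paper does (citation of \cite{gast} for $B_1(H_k)$ and $g(H_k)$, plus the identity $B_1(H_k)=\sum_{i\mid r} iB_i(F_k)$). Two points deserve correction.

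\emph{Part (i).} You propose to exhibit a rational place of $F_k$ coming from the splitting locus. This is not automatic: the splitting locus of $\mathcal H$ consists of places $(x_0=\alpha)$ for certain $\alpha\in{\mathbb F}_{q^r}$, and you have not checked that any such $\alpha$ lies in ${\mathbb F}_q$ (so that the place is already ${\mathbb F}_q$-rational in $F_0$). The paper instead uses the pole $P_\infty$ of $x_0$, which by \cite[Lemma 2.1]{gast} is \emph{totally ramified} in every $H_k$; since $P_\infty$ is Galois-invariant, its restriction to $F_0$ is rational over ${\mathbb F}_q$ and totally ramified in every $F_k$, giving a degree-one place in each $F_k$ with no further verification needed.

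\emph{Part (iv).} Your asymptotic argument contains a genuine gap: the assertion that $B_i(F_k)=O(q^{ik/2})$ for $i\mid r$, $i<r$, is not justified and is not a consequence of the Weil bounds (those only give $iB_i(F_k)\le q^i+1+2g_k q^{i/2}$, which is of order $g_k\sim q^{rk/2}$, not $q^{ik/2}$). What (ii) and (iii) give directly is only $\sum_{i\mid r} i\beta_i(\mathcal F)=q^{r/2}-1$. To extract $\beta_r=\frac{q^{r/2}-1}{r}$ from this one invokes the Tsfasman--Vl\u adu\c t basic inequality $\sum_{i\ge 1}\frac{i\beta_i}{q^{i/2}-1}\le 1$: since $q^{i/2}-1<q^{r/2}-1$ for $i<r$, any positive $\beta_i$ with $i<r$ would force the sum to exceed $1$. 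This is the equivalence recorded in the remark following the proposition (cf.\ \cite{baro5}), and it is what the paper is implicitly citing when it says ``(iv) follows from (ii) and (iii)''.
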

\begin{proof}
The tower $\mathcal{H}/{\mathbb F}_{q^r}=(H_k/{\mathbb F}_{q^r})_{k\geq 0}$ is the well-known Garcia-Stichtenoth 
tower over ${\mathbb F}_{q^r}$  studied in \cite{gast}.
 Since $\mathcal{H}/{\mathbb F}_{q^r}$ is the constant field extension of ${\mathcal F}/{\mathbb F}_q$, 
the assertions $(ii)$ and $(iii)$ are respectively straightforward consequences 
of Remark 3.4 and Theorem 2.10 in \cite{gast}. 
Assertion $(iv)$ follows from assertions $(ii)$ and $(iii)$. 
To prove (i), we consider the pole $P_{\infty}$ of  $x_0$ in $H_0/\mathbb{F}_{q^r}$. 
By \cite[Lemma 2.1]{gast}, the place $P_{\infty}$ is totally ramified in $H_k/H_0$ for all $k\geq 0$.
Since $P_{\infty}$ is invariant under the action of the Galois group 
$Gal(\mathbb{F}_{q^r}/\mathbb{F}_{q})$, its restriction $P_{\infty}\cap F_0$ in $F_0/\mathbb{F}_{q}$ 
 is then totally ramified in $F_k/\mathbb{F}_q$, and so $(i)$ follows.
 \end{proof}
 Let us recall the following fact:
 \begin{remarque} 
 Let ${\mathcal F}/{\mathbb F}_q=(F_k/{\mathbb F}_q)_{k\geq 0}$ be a sequence with $g(F_k)\to \infty$ as $k\to\infty$. 
 Consider the constant field extension ${\mathcal F}/{\mathbb F}_{q^r}=(F_k{\mathbb F}_{q^r}/{\mathbb F}_{q^r})_{k\geq 0}$  
of ${\mathcal F}/{\mathbb F}_q$, for some $r\geq 1$. 
 It is known from \cite{baro5} that
\[\beta_r({\mathcal F}/{\mathbb F}_q)=\frac{q^{r/2}-1}{r}\; \textrm{ if and only if } \; \beta_1({\mathcal F}/{\mathbb F}_{q^r})=q^{r/2}-1.\]

\end{remarque}

Now, let us give an example concerning the estimation of the class number in all the steps of the towers 
$\mathcal{H}/{\mathbb F}_{q^r}=(H_k/{\mathbb F}_{q^r})_{k\geq 0}$ and 
$\mathcal{F}/{\mathbb F}_q=(F_k/{\mathbb F}_q)_{k\geq 0}$ in the case of $q=2$ 
and $r=2$. 

\begin{proposition}\label{propohgastsur4et2}
The steps of the towers 
$\mathcal{H}/{\mathbb F}_{q^2}=(H_k/{\mathbb F}_{q^2})_{k\geq 0}$ and 
$\mathcal{F}/{\mathbb F}_q=(F_k/{\mathbb F}_q)_{k\geq 0}$ 
defined in Proposition \ref{optimal.exm} with $q=2$ and $r=2$ have  class number such that
$$h(H_k/{\mathbb F}_{q^2})\geq h_{BRT1,k}$$ 
and $$h(F_k/{\mathbb F}_q)\geq h_{BRT2,k}$$
where
\begin{equation*}
\begin{split}
h_{BRT1,k}= 
\frac{(q^r-1)^2}{(g_k+1)(q^r+1)-B_1(H_k)} 
\bigg(
 \left(\begin{array}{c}
 B_1(H_k)+g_k-1\\
B_1(H_k)
\end{array}\right)
 \\
 + 
q^{r(g_k-1)}\left [ \left( \frac{q^r}{q^r-1}\right)^{B_1(H_k)}
-  \right. \\ \left. B_1(H_k)\left(
\begin{array}{c}
 B_1(H_k)+g_k-2\\
 B_1(H_k)
\end{array}
\right) \int_{0}^{\frac{1}{q^r}} \frac{(\frac{1}{q^r}-t)^{g_k-2}}{(1-t)^{B_1(H_k)+g_k-1}}\rm dt. \right ]
\biggr)
\end{split}
\end{equation*}
and
 \begin{equation*}
\begin{split}
h_{BRT2,k}=
\frac{(q-1)^2}{(g_k+1)(q+1)-B'_1(F_k)} 
\bigg(
\prod_{i=1}^2 \left(\begin{array}{c}
 B'_i(F_k)+m_i\\
B'_i(F_k)
\end{array}\right)
 \\
+ 
q^{g_k-1} \prod_{i=1}^2 \left [ \left( \frac{q^i}{q^i-1}\right)^{B'_i(F_k)} 
-\right .\\ \left .
B'_i(F_k)\left(
\begin{array}{c}
 B'_i(F_k)+m_i\\
 B'_i(F_k)
\end{array}
\right) \int_{0}^{\frac{1}{q^i}} \frac{(\frac{1}{q^i}-t)^{m_i}}{(1-t)^{B'_i(F_k)+m_i+1}}\rm dt. \right ]
\biggr)
\end{split}
\end{equation*}
with $B'_1(F_k)=2$, $B'_2(F_k)=\lfloor \frac{B_1(H_k)-2}{2}\rfloor$ and $\sum_{i=1}^2im_i\leq g_k-2$ for all $k\geq2$.
\end{proposition}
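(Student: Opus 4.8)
The plan is to read off both inequalities from Theorem \ref{mainmain}: each of $h_{BRT1,k}$ and $h_{BRT2,k}$ is the lower bound $h_{BRT}$ of (\ref{HBRT}) for a suitable function field, base field, and choice of the data $D_1,D_2,l,m$. Concretely, $h(H_k/{\mathbb F}_{q^2})$ will be bounded by applying Theorem \ref{mainmain} to $H_k$ over its constant field ${\mathbb F}_{q^2}$, so that the role of the ``$q$'' of that theorem is played by $q^r=4$, while $h(F_k/{\mathbb F}_q)$ will be bounded by applying it to $F_k$ over ${\mathbb F}_q={\mathbb F}_2$. Throughout, the genus of the $k$-th step of either tower is the integer $g_k$ of Proposition \ref{optimal.exm}(iii), and one checks directly from that formula that $g_k\geq 5$ for all $k\geq 2$ (for instance $g_2=5$), which is what makes the index sets used below admissible.

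For the tower $\mathcal{H}/{\mathbb F}_{q^2}$ I would take $D_1=D_2=\{1\}$, $l_1=g_k-1$ and $m_1=g_k-2$. The hypotheses of Theorem \ref{mainmain} then hold: $\{1\}\subseteq\{1,\dots,g_k-1\}$ and $\{1\}\subseteq\{1,\dots,g_k-2\}$ since $g_k\geq 3$; $B_1(H_k)\geq 1$ by Proposition \ref{optimal.exm}(ii); and $l_1=g_k-1\leq g_k-1$, $m_1=g_k-2\leq g_k-2$. Substituting these data into (\ref{HBRT}) with $q^r$ in place of $q$, and using $\binom{B_1(H_k)+g_k-1}{g_k-1}=\binom{B_1(H_k)+g_k-1}{B_1(H_k)}$ together with $B_1(H_k)+m_1+1=B_1(H_k)+g_k-1$ in the integral, one recovers term by term the expression defining $h_{BRT1,k}$ (note that $q^{g-1}$ becomes $(q^r)^{g_k-1}=q^{r(g_k-1)}$ and $\tfrac{q^r}{q^r-1}$ stays $\tfrac{q^r}{q^r-1}$ because the element of $D_2$ is $1$). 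Hence $h(H_k/{\mathbb F}_{q^2})\geq h_{BRT1,k}$.

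For the descent $\mathcal{F}/{\mathbb F}_q$ the first task is to pin down the relevant numbers of places. As in the proof of Proposition \ref{optimal.exm}(i) the pole of $x_0$, and likewise its zero, restrict to rational places of $F_k$ that are totally ramified throughout the tower, and one checks from the known ramification and splitting behaviour of the Garcia--Stichtenoth tower that $F_k$ has no further rational place, so $B_1(F_k)=2=B'_1(F_k)$. Since $H_k=F_k{\mathbb F}_{q^2}$ is the constant field extension of degree $2$, a degree-$1$ place of $F_k$ stays rational in $H_k$ while a degree-$2$ place of $F_k$ splits into two rational places of $H_k$, so $B_1(H_k)=B_1(F_k)+2B_2(F_k)=2+2B_2(F_k)$; combined with the value of $B_1(H_k)$ in Proposition \ref{optimal.exm}(ii), which is an even integer, this gives $B_2(F_k)=\frac12(B_1(H_k)-2)=\lfloor \frac12(B_1(H_k)-2)\rfloor=B'_2(F_k)$, and this is $\geq 1$ for $k\geq 2$. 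Now apply Theorem \ref{mainmain} to $F_k/{\mathbb F}_q$ with $D_1=D_2=\{1,2\}$ and $l_i=m_i$ for $i=1,2$, where $\sum_{i=1}^2 i m_i\leq g_k-2$: the hypotheses hold because $\{1,2\}\subseteq\{1,\dots,g_k-2\}$ (as $g_k\geq 4$), $B_1(F_k)=2\geq 1$ and $B_2(F_k)\geq 1$, and $\sum i l_i=\sum i m_i\leq g_k-2\leq g_k-1$. Reading off (\ref{HBRT}) with $q=2$ and these data yields exactly $h_{BRT2,k}$, so $h(F_k/{\mathbb F}_q)\geq h_{BRT2,k}$.

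The only non-mechanical ingredient is the exact value $B_1(F_k)=2$ (equivalently, through $B_1(H_k)=B_1(F_k)+2B_2(F_k)$, the identification of $B_2(F_k)$ with $\lfloor(B_1(H_k)-2)/2\rfloor$), and this is the step I expect to need the most care: since the bound $h_{BRT}$ of (\ref{HBRT}) is nondecreasing in each $B_r$ — the bracketed factors are the sums $Q_{r,m_r}$ of (\ref{Qrs}), which increase with $B_r$, the binomials $\binom{B_r+l_r}{l_r}$ increase with $B_r$, and the prefactor $\frac{(q-1)^2}{(g+1)(q+1)-B_1}$ increases with $B_1$ — an overestimate of some $B_r$, or a wrong direction in the comparison, would invalidate the inequality. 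Everything else is bookkeeping: substitution into Theorem \ref{mainmain} together with the elementary verification, via Proposition \ref{optimal.exm}(iii), that $g_k$ is large enough for the index sets $\{1\}$ and $\{1,2\}$ to be admissible at every step $k\geq 2$.
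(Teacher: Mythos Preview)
Your treatment of $h_{BRT1,k}$ matches the paper exactly: apply Theorem \ref{mainmain} over ${\mathbb F}_{q^r}$ with $D_1=D_2=\{1\}$, $l_1=g_k-1$, $m_1=g_k-2$.

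For $h_{BRT2,k}$, however, your argument rests on the assertion $B_1(F_k)=2$, and this is a genuine gap. You claim the zero of $x_0$ is totally ramified in the tower (it is not, in the first Garcia--Stichtenoth tower; only $P_\infty$ is, cf.\ \cite[Lemma 2.1]{gast}), and even granting $B_1(F_k)\geq 2$ you give no argument ruling out further rational places. The paper states explicitly: ``we do not know $B_1(F_k)$ nor $B_2(F_k)$''. So your identification $B_2(F_k)=B'_2(F_k)$ is unjustified, and as you yourself note, if $B_1(F_k)>2$ then $B'_2(F_k)>B_2(F_k)$ and a naive application of Theorem \ref{mainmain} with $B'_2$ in place of $B_2$ is illegitimate.

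The paper's argument is different and avoids this issue entirely. From $B_1(H_k)=B_1(F_k)+2B_2(F_k)$ and the parity of $B_1(H_k)$ one gets $B_1(F_k)$ even, hence $\geq 2$. Now fix two degree-$1$ places and pair the remaining $B_1(F_k)-2$ degree-$1$ places into $(B_1(F_k)-2)/2$ sums $P+P'$, each of degree $2$. Together with the genuine degree-$2$ places this gives exactly $B'_2(F_k)$ ``degree-$2$ building blocks'', and every formal effective divisor assembled from $B'_1=2$ degree-$1$ blocks and $B'_2$ degree-$2$ blocks is a genuine effective divisor of $F_k$, with distinct assemblies giving distinct divisors. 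Hence the counts $A_n$ are bounded below by the same combinatorial expressions as in Proposition \ref{propoAn} with $(B'_1,B'_2)$ in place of $(B_1,B_2)$, so $\Sigma_{1,\inf}$ and $\Sigma_{2,\inf}$ computed with $(B'_1,B'_2)$ are still lower bounds for $\Sigma_1,\Sigma_2$; and since $B'_1\leq B_1$, the prefactor $\frac{(q-1)^2}{(g_k+1)(q+1)-B'_1}$ is also no larger than the true one. This is what licenses the substitution, not any exact knowledge of $B_1(F_k)$ or $B_2(F_k)$.
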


\begin{proof}
The bound $h_{BRT1,k}$ follows directly from Theorem \ref{mainmain}. 
By Section \ref{preli}, it is clear that the lower bounds of the class number $h$ are increasing with the number 
of effective divisors constructed with places of degree less or equal to a fixed degree. Here, we do not know $B_1(F_k)$ 
nor $B_2(F_k)$ but we know the value of the quantity $B_1(F_k)+2B_2(F_k)$. This quantity is even then $B_1(F_k)$ is even. 
Hence, if we replace $B_1(F_k)\geq 2$ by $B'_1(F_k)=2$ 
and $B_2(F_k)$ by $B'_2(F_k)= \left \lfloor \frac{(B_1(F_k)+2B_2(F_k))-2}{2} \right \rfloor $, we obtain a lower bound on the number 
of effective divisors and then a lower bound for $h_{BRT2,k}$.
\end{proof}

\medskip

{\bf Numerical estimations:} 

\medskip

Here are some effective numerical estimations of  the class numbers of some steps of the towers 
$\mathcal{H}/{\mathbb F}_{4}$ and $\mathcal{F}/{\mathbb F}_2$ given in Proposition \ref{propohgastsur4et2}.

\medskip

More precisely, the following table gives estimations of the class number of the first steps 
of the original tower  $\mathcal{H}$ of Garcia-Stichtenoth 
reaching the Drinfeld-Vladut bound. We make these estimations with $q=4$ and we only use the places of 
degree one. Moreover, recall that we denote by $h_{AHL}$ the adapted bound in this context extracted from Theorem \ref{AHL}  
(namely bound (III)) obtained by Aubry, Halaoui, and Lachaud. In particular, we see that our bound $h_{BRT}$ 
of Proposition \ref{propohgastsur4et2} is better than $H_{AHL}$ for $k\geq 3$. Note also that in this context  the bound 
$h_{BR}$ obtained in 
\cite[Theorem 3.1]{baro5} gives the same estimations because in the particular case $r=1$, these bounds are the same as those 
given by  $h_{BRT}$ (cf. Remark \ref{comparaisontheorique}).
 
\medskip

\begin{center}
\begin{tabular}{|l|c|c|c|c|c|c|c|clr}
\hline
$q^2$  & step k & $g_k$ &  $B_1(H_k/{\mathbb F}_q)$ & $m_1$ & $h_{BRT1,k}=h_{BR}$ & $h_{AHL}$   \\
\hline
4 &   2 &    5 &     16 &    3  &  7434  & 12240\\
 \hline
4 &     3  &    14  &  30 &     12   & 64 916 794 126   &    46 784 340 680\\
 \hline     
4 &      4       &  33 &    56    &    31 & $1.43 \times 10^{25}$   &  $0.075 \times 10^{25}$ \\
 \hline      
\end{tabular}
\end{center}

\medskip

Now, in the following table, we give estimations of the class number of the first steps 
of the descent tower  of Garcia-Stichtenoth 
reaching the Drinfeld-Vladut bound of order $r=2$. We make these estimations with $q=2$ 
and we only use the places of 
degree one and two. Note that for any $k$, Condition $B_1(H_k/{\mathbb F}_q)\geq g_k(q^{1/2}-1)$ 
of Theorem 2.4 in \cite{auhala1} (or \cite{auhala}) 
is not  satisfied. Hence, we must not use Bound (III) of Theorem \ref{AHL}. 
Moreover, recall that it is clear that Bound (II) gives no significant result
because of the factor $q^g$ in the denominator.

\medskip

 \begin{center}
\begin{tabular}{|l|c|c|c|c|c|c|c|c|}
\hline
q  & step k & $g_k$ &   $B'_1(F_k/{\mathbb F}_q)$ &  $B'_2(F_k/{\mathbb F}_q)$ & $m_1$ & $m_2$ & $h_{BRT2,k}$   \\
\hline
2 &    2&   5 &     2 & 7 &   1 &  1&  7 \\
 \hline
2 &     3  &     14 &  2 &   14  &   2 &   5  &  21257  \\
 \hline     
2 &     4     &  33 &     2   &   27  &  5    & 13 & 343 733 443 618 \\
 \hline      
 \end{tabular}
 \end{center}

\medskip 
 
 \begin{remarque}
Recall that for $H_k/{\mathbb F}_{4}$, $$\Sigma_{1,k,inf}= \left(\begin{array}{c}
 B_1(H_k)+g_k-1\\
B_1(H_k)
\end{array}\right)$$ and 
\begin{equation*}
\begin{split}
\Sigma_{2,,k,inf}=q^{g_k-1}\left [ \left( \frac{q^r}{q^r-1}\right)^{B_1(H_k)} - \right .\\
\left . B_1(H_k)\left(
\begin{array}{c}
 B_1(H_k)+g_k-2\\
 B_1(H_k)
\end{array}
\right) \int_{0}^{\frac{1}{q^r}} \frac{(\frac{1}{q^r}-t)^{g_k-2}}{(1-t)^{B_1(H_k)+g_k-1}}\rm dt \right ].
\end{split}
\end{equation*}

By our numerical simulations in the tower $\mathcal{H}/{\mathbb F}_{q^r}$, 
we have $\frac{\Sigma_{1,k,inf}}{\Sigma_{2,k,inf}}\leq 0.0054$ for all $k\geq 4$ 
and in fact, this ratio tends to zero when $k$ tends to the infinity. Moreover, let us set 
$$C_k=\left( \frac{q^r}{q^r-1}\right)^{B_1(H_k)}$$ and 
$$D_k=B_1(H_k)\left(
\begin{array}{c}
 B_1(H_k)+g_k-2\\
 B_1(H_k)
\end{array}
\right) \int_{0}^{\frac{1}{q^r}} \frac{(\frac{1}{q^r}-t)^{g_k-2}}{(1-t)^{B_1(H_k)+g_k-1}}\rm dt.$$ Then the ratio 
$\frac{D_k}{C_k}\leq 0.01$ for all $k\geq 4$. In fact, this ratio tends to zero when $k$ tends to the infinity. 
The situation 
in the tower $\mathcal{F}/{\mathbb F}_q$ is similar. Moreover, in the numerical estimations 
concerning the tower $\mathcal{F}/{\mathbb F}_q$, we have 
specialized $l_i=m_i$ for $i=1,2$ from the general formula in Theorem \ref{MainTheo}. Indeed, the realized numerical 
simulations on few steps give no improvement of the bounds for the class number of $F_k$ by distinguishing $l_i$ and 
$m_i$ in the quantities 
$$\Sigma_{1,k}=\prod_{i=1}^2 \left(\begin{array}{c}
 B_i(F_k)+l_i\\
B_i(F_k)
\end{array}\right)$$ and 

$$\Sigma_{2,k}=\prod_{i=1}^2 \left [ \left( \frac{q^i}{q^i-1}\right)^{B'_i(F_k)}
-B'_i\left(
\begin{array}{c}
 B'_i(F_k)+m_i\\
 B'_i(F_k)
\end{array}
\right) \int_{0}^{\frac{1}{q^i}} \frac{(\frac{1}{q^i}-t)^{m_i}}{(1-t)^{B'_i(F_k)+m_i+1}}\rm dt \right ].$$   
\end{remarque}

\subsubsection{Example 2}

This example concerns a tower defined over cubic finite fields constructed by  Bassa, 
Garcia and Stichtenoth  \cite{bagast}. 
We study bounds of the class number of all the steps of this tower as well as 
those of its descent tower over ${\mathbb F}_{q}$. 
Note that in these two cases, we only know lower and upper bounds for the genus of each step. 
Moreover, even if we do not know whether there exist rational places in each step of 
the descent tower over ${\mathbb F}_{q}$, we can give good estimations for the class number 
of all the steps of this tower.
 
\begin{proposition}\label{param2} 
Let $q$ be a prime power and
$\mathcal{H}/{\mathbb F}_{q^3}=(H_k/{\mathbb F}_{q^3})_{k\geq 0}$ the tower
defined recursively by the polynomial 
\begin{eqnarray}\label{q3.polyn}
f(X,Y)=(Y^q-Y)^{q-1}+1+\frac{X^{q(q-1)}} {(X^{q-1}-1)^{q-1}} \in {\mathbb F}_q[X,Y]. 
\end{eqnarray}
 The tower $\mathcal{H}/{\mathbb F}_{q^3}$ and the descent tower $\mathcal{F}/{\mathbb F}_q=(F_k/{\mathbb F}_q)_{k\geq 0}$  
defined by its constant field extension 
 $\mathcal{H}/{\mathbb F}_{q^3}=\mathcal{F}/{\mathbb F}_q\otimes_{{\mathbb F}_q} {\mathbb F}_{q^3}$ have the following properties:
 
\begin{itemize}
 \item[(i)] $B_1(H_k)\geq 3B_3(F_k)$   with $B_3(F_k)\geq \frac{(q^3-q^2-q+1)q^{k+1}}{3}$ for all $k\geq 1$.
 \item[(ii)] for all $k\geq 2$, one has that
 \begin{equation}\label{q3}
 \frac{1}{2}\big(q^4+q^3-6q^2+4q)q^{k-1}-q^3+q^2+1\leq g(F_k)\leq \frac{(q^2+q-2)q^{k+1}}{2}.
 \end{equation}
 For $k=1$, the first equality holds. 
 \item[(iii)] $\beta_3({\mathcal F}/{\mathbb F}_q)\geq \frac{2(q^2-1)}{3(q+2)}$.
\end{itemize}
\end{proposition}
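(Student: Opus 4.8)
The plan is to establish Proposition \ref{param2} by transferring the known structural results about the Bassa--Garcia--Stichtenoth tower $\mathcal{H}/{\mathbb F}_{q^3}$ over ${\mathbb F}_{q^3}$ to its descent $\mathcal{F}/{\mathbb F}_q$ via the constant field extension, exactly as in the proof of Proposition \ref{optimal.exm}. For assertion (i), I would start from the splitting behaviour of the tower $\mathcal{H}/{\mathbb F}_{q^3}$: the paper of Bassa--Garcia--Stichtenoth exhibits a lower bound for the number of rational places of $H_k/{\mathbb F}_{q^3}$, and a place of degree one on $H_k$ lies above a place of $F_k$ whose degree divides $3$. Since $\mathrm{Gal}({\mathbb F}_{q^3}/{\mathbb F}_q)$ acts on the rational places of $H_k$ with orbits of size $1$ or $3$, each place of degree $3$ of $F_k$ that splits completely in $H_k{\mathbb F}_{q^3}=H_k$ contributes exactly $3$ rational places of $H_k$; this yields $B_1(H_k)\geq 3B_3(F_k)$. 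The explicit lower bound $B_3(F_k)\geq (q^3-q^2-q+1)q^{k+1}/3$ then follows from the count of completely splitting places in $\mathcal{H}/{\mathbb F}_{q^3}$ given in \cite{bagast} (the relevant set $\mathrm{Split}(\mathcal{H})$ having $q^3-q^2-q+1 = (q-1)^2(q+1)$ elements modulo orbit considerations, propagating geometrically in $k$).

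For assertion (ii), the genus estimates are a direct consequence of the fact that $g(H_k)=g(F_k)$ (constant field extensions preserve the genus) together with the genus bounds for $\mathcal{H}/{\mathbb F}_{q^3}$ established in \cite{bagast}. I would cite the ramification analysis there: the tower is not totally ramified everywhere, so one only obtains an upper bound $g(H_k)\leq (q^2+q-2)q^{k+1}/2$ and a lower bound of the stated shape, and for $k=1$ the ramification is completely understood so the lower bound is an equality. The main point to check carefully is that the polynomial (\ref{q3.polyn}) as written here indeed defines the same tower as in \cite{bagast} and that the indexing conventions (whether the tower starts at $F_0$ or $F_1$) match, so the powers of $q$ in the bounds come out right.

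For assertion (iii), I would combine (i) and (ii): by definition $\beta_3(\mathcal{F}/{\mathbb F}_q)=\lim_{k\to\infty} B_3(F_k)/g(F_k)$ (or its $\liminf$), and dividing the lower bound for $B_3(F_k)$ from (i) by the upper bound for $g(F_k)$ from (ii) gives
$$
\frac{B_3(F_k)}{g(F_k)} \geq \frac{(q^3-q^2-q+1)q^{k+1}/3}{(q^2+q-2)q^{k+1}/2} = \frac{2(q^3-q^2-q+1)}{3(q^2+q-2)} = \frac{2(q-1)^2(q+1)}{3(q-1)(q+2)} = \frac{2(q^2-1)}{3(q+2)},
$$
after factoring $q^3-q^2-q+1=(q-1)^2(q+1)$ and $q^2+q-2=(q-1)(q+2)$, which is precisely the claimed bound. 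Taking the limit inferior over $k$ preserves the inequality.

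The hard part will not be any single computation but rather pinning down the precise references in \cite{bagast}: I expect the main obstacle to be matching the exact statement of the splitting locus and the genus formula for the BGS tower to the form needed here, since those papers often state results for a subtower or after a change of variables, and one must be sure the descent $\mathcal{F}/{\mathbb F}_q$ with $\mathcal{H}=\mathcal{F}\otimes_{{\mathbb F}_q}{\mathbb F}_{q^3}$ genuinely has ${\mathbb F}_q$ algebraically closed in each $F_k$ (so that $B_3(F_k)$ makes sense and the Galois orbit argument is valid). Once the citations are aligned, (i), (ii), (iii) follow mechanically as above.
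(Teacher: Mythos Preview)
Your approach to (i) and (iii) is essentially the same as the paper's and is correct. For (i), the paper argues slightly more concretely by noting that $B_3(F_0)=(q^3-q)/3$ (from $B_1(H_0)=B_1(F_0)+3B_3(F_0)$), that every degree-$3$ place of $F_0$ has its three extensions in $H_0$ lying in $\mathrm{Split}(\mathcal{H})$, hence splits completely in every $F_k$, and then multiplies by $[F_k:F_0]=(q-1)q^k$; your Galois-orbit formulation is equivalent. Your computation for (iii) matches the paper exactly.

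The genuine gap is in (ii). You propose to cite both the upper and lower genus bounds from \cite{bagast}, but only the upper bound $g(H_k)\leq (q^2+q-2)q^{k+1}/2$ is available there. The lower bound in the specific form stated is \emph{not} in \cite{bagast}; the paper derives it by an explicit Hurwitz genus formula computation. Concretely, the paper passes to the constant field extension $F'_k=F_k{\mathbb F}_{q^2}$, computes $g(F'_1)$ exactly by determining $\deg\mathrm{Diff}(F'_1/F'_0)$ from the ramification data in \cite[Lemma~5.2]{bagast} (obtaining $g(F'_1)=\tfrac12(q^4-q^3-4q^2+4q+2)$), and then bounds $\deg\mathrm{Diff}(F'_k/F'_1)$ from below using the fact that certain places have $e(Q_k|Q)=q^{k-1}$ and $d(Q_k|Q)=2(q^{k-1}-1)$ (from \cite[Lemmas~5.2,~6.3,~6.4]{bagast}). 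Applying Hurwitz to $F'_k/F'_1$ then yields the stated lower bound, with equality at $k=1$ because that case is the exact computation. Your proposal treats this as a citation lookup, but it is in fact the main piece of original work in the proof; you should supply (or at least outline) this Hurwitz computation rather than hope it appears verbatim in the reference.
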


\begin{proof}
Let $H_0=\mathbb{F}_{q^3}(x_0)$ be the rational function field and  $H_1={\mathbb F}_{q^3}(x_0,x_1)$ 
where $f(x_0,x_1)=0$. We know the following from  \cite[Theorems 2.2, 3.4 and 6.5]{bagast}:
\begin{itemize}
\item [(a)]  For all $k\geq 0$, the extensions $H_{k+1}/H_{k}$ are Galois with $[H_1:H_0]=q(q-1)$ 
and $[H_{k+1}:H_{k}]=q$ for  all $k\geq 1$. 
\item [(b)] The descent sequence of $\mathcal{H}/{\mathbb F}_{q^3}$ to any field $K$ containing ${\mathbb F}_q$ is a tower. 
\item [(c)]  The genus $g(H_k)\leq \frac{(q^3+q^2-2q)q^k}{2}$ for all $k\geq 0$.
\item [(d)] $\operatorname{Split}(\mathcal{H})\supseteq \left\{(x_0=\alpha)| 
\alpha\in \mathbb{F}_{q^3}\setminus \mathbb{F}_q  \right\}$. 
\end{itemize}
(i)  Since
\[B_1(H_0)=B_1(F_0)+3 B_3(F_0),\]
we have  that $B_3(F_0)=\frac{q^3-q}{3}$. Obviously, any degree $3$ 
place $P\in \mathbb{P}(F_0)$ splits completely in $H_0$ and its extensions 
are in the set $Split(\mathcal{H})$. Hence, clearly,  $P$ splits 
completely in $F_k$ for all $k\geq 0$. Thus, for all $k\geq 0$, we obtain that 
\[B_3(F_k)\geq B_3(F_0)[F_k:F_0],\] 
and hence (i) follows.\\
(ii)  We first note that after taking  the constant field 
extension the genera do not change, see \cite[Theorem 3.6.3(b)]{stic2}. 
The second inequality follows from (b) as $g(F_k)=g(H_k)$.  
To prove the first inequality, we consider the sequence 
${\mathcal F}'/{\mathbb F}_{q^2}=(F'_k/{\mathbb F}_{q^2})_{k\geq 0}$, where $F'_k=F_k{\mathbb F}_{q^2}$. 
Then $g(F_k)=g(F'_k)$ for all $k\geq 0$.  We first compute $g(F'_1)$. 
Let $Q$ be a place of $F'_1$ and $P=Q\cap F'_0$. 
By using \cite[Lemma 5.2]{bagast} and  \cite[Definition 7.4.12 and Theorem 3.5.1]{stic2}, we obtain that
\begin{equation*}
d(Q|P)=\begin{cases} 0      \textrm{ if } P=(x_0=0)\\
                     2q-2   \textrm{ if } P=(x_0=\infty) \textrm{ or } (x_0=\alpha) \textrm{ with } \alpha \in {\mathbb F}_q^*\\                  
                     q-2      \textrm{ if } P=(x_0=\beta) \textrm{ with } \beta \in {\mathbb F}_{q^2}\setminus {\mathbb F}_q. 
                     \end{cases}
\end{equation*}
Let $s=\#\{ Q\in \mathbb{P}(F'_1): Q|P\}$ and $f=f(Q|P)$ (note that $F'_1/F'_0$ is Galois).  By the same lemma,
\begin{equation}\label{sf}
sf= \begin{cases}
              q-1 \textrm{ if }  P=(x_0=\infty) \textrm{ or } (x_0=\alpha) \textrm{ with } \alpha \in {\mathbb F}_q^*\\
              q      \textrm{ if } P=(x_0=\beta) \textrm{ with } \beta \in {\mathbb F}_{q^2}\setminus {\mathbb F}_q. 
              \end{cases}
\end{equation}
Hence, 
\begin{eqnarray*}
\deg \operatorname{Diff}(F'_1/F'_0)&=&\sum_{P}\sum _Q d(Q|P)\deg Q\\
                   &=&(2q-2)(q-1)+(2q-2)(q-1)^2+q(q-2)(q^2-q)\\
                   &=&q^4-q^3-2q^2+2q.
\end{eqnarray*}
The Hurwitz Genus Formula \cite{stic2} for the extension $F'_1/F'_0$ now yields that 
\begin{equation}\label{g(F_1)}
g(F'_1)=\frac{1}{2}(q^4-q^3-4q^2+4q+2).
\end{equation}
Next, we estimate $g(F'_k)$ for any $k\ge 2$. Let $Q_k$ be a place of $F'_k$ and $Q=Q_k\cap F'_1$. 
Suppose that $Q$ lies above one of the following places:
\begin{itemize}
\item $(x_0=\infty)$ or $(x_0=\alpha)$  with $\alpha \in {\mathbb F}_q^*$, 
\item  $(x_0=\beta)$  with  $\beta \in {\mathbb F}_{q^2}\setminus {\mathbb F}_q$ and $(x_1=\alpha)$ with $\alpha \in {\mathbb F}_q^*$.
\end{itemize} 
 By \cite[Lemmas 5.2, 6.3, 6.4]{bagast} (also c.f.proof of \cite[Theorem 6.5]{bagast}), we have that
\begin{equation}\label{ed}
e(Q_k|Q)=q^{k-1}\quad \textrm{ and } \quad d(Q_k|Q)=2(e(Q_k|Q)-1)=2(q^{k-1}-1).
\end{equation}
Hence, using (\ref{sf}) and (\ref{ed}) yields that
\begin{eqnarray*}
\deg \operatorname{Diff}(F'_k/F'_1)&\geq&  \sum_Q\sum_{Q_k} d(Q_k|Q)\deg Q_k\\
                                 &=& 2(q^{k-1}-1)[(q-1)+(q-1)^2+(q^2-q)(q-1)]\\
                                 &=& 2(q^3-q^2)(q^{k-1}-1).
\end{eqnarray*}
Then using (\ref{g(F_1)}) and the Hurwitz Genus Formula \cite{stic2} for the extension $F'_k/F'_1$ gives that
\begin{equation}\label{g(F_k)}
g(F_k)=g(F'_k)\geq \frac{1}{2}(q^4+q^3-6q^2+4q)q^{k-1}-q^3+q^2+1.
\end{equation}
Note that when $k=1$, the equality in (\ref{g(F_k)}) holds.\\
(iii) follows from (i) and (ii).\\
\end{proof}

\begin{proposition}\label{hparam2}
The steps of the towers 
$\mathcal{H}/{\mathbb F}_{q^3}=(H_k/{\mathbb F}_{q^3})_{k\geq 0}$ and $\mathcal{F}/{\mathbb F}_q=(F_k/{\mathbb F}_q)_{k\geq 0}$ 
defined in Proposition \ref{param2} have a class number such that
$$h(H_k/{\mathbb F}_{q^3})\geq h_{BRT1,k}$$ 
and $$h(F_k/{\mathbb F}_q)\geq h_{BRT2,k}$$
where
\begin{equation*}
\begin{split}
h_{BRT1,k}= 
\frac{(q^3-1)^2}{(g_k+1)(q^3+1)-B_{1,k}} 
\bigg(
 \left(\begin{array}{c}
 B_{1,k}+g_k-1\\
B_{1,k}
\end{array}\right)
 \\
 + 
q^{3(g_k-1)}\left [ \left( \frac{q^3}{q^3-1}\right)^{B_{1,k}}
-  \right. \\ \left. B_{1,k}\left(
\begin{array}{c}
 B_{1,k}+g_k-2\\
 B_{1,k}
\end{array}
\right) \int_{0}^{\frac{1}{q^3}} \frac{(\frac{1}{q^r}-t)^{g_k-2}}{(1-t)^{B_{1,k}+g_k-1}}\rm dt. \right ]
\biggr)
\end{split}
\end{equation*}
and
 \begin{equation*}
\begin{split}
h_{BRT2,k}=
\frac{(q-1)^2}{(g_k+1)(q+1)} 
\bigg(
 \left(\begin{array}{c}
 B_{3,k}+\lfloor \frac{g_k-1}{3} \rfloor\\
B_{3,k}
\end{array}\right)
 \\
+ 
q^{g_k-1}  \left [ \left( \frac{q^3}{q^3-1}\right)^{B_{3,k}} 
-\right .\\ \left .
B_{3,k}\left(
\begin{array}{c}
 B_{3,k}+\lfloor \frac{g_k-2}{3}\rfloor\\
 B_{3,k}
\end{array}
\right) \int_{0}^{\frac{1}{q^3}} \frac{(\frac{1}{q^3}-t)^{\lfloor \frac{g_k-2}{3} \rfloor}}{(1-t)^{B_{3,k}
+\lfloor \frac{g_k-2}{3} \rfloor+1}}\rm dt. \right ]
\biggr)
\end{split}
\end{equation*}
with $B_{3,k}=\frac{(q^3-q^2-q+1)q^{k+1}}{3}$, $B_{1,k}=(q^3-q^2-q+1)q^{k+1}$ and 
$g_k=  \frac{1}{2}\big(q^4+q^3-6q^2+4q)q^{k-1}-q^3+q^2+1$ for all $k\geq 1$
\end{proposition}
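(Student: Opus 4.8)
The plan is to obtain both inequalities as direct applications of Theorem \ref{mainmain}, fed with the explicit estimates of Proposition \ref{param2}. Recall from Section \ref{preli} that $h=\frac{S}{R}=\frac{\Sigma_1+\Sigma_2}{R}$ with $R\le\frac{(g+1)(q+1)-B_1}{(q-1)^2}$ by (\ref{R2}), so everything comes down to lower bounds for $\Sigma_1$ and $\Sigma_2$ of the type produced in Theorems \ref{lowerboundsum1} and \ref{lowerboundsum2}, together with the control of the factor $(g+1)(q+1)-B_1$. For the tower $\mathcal{H}/{\mathbb F}_{q^3}$ I would apply Theorem \ref{mainmain} over ${\mathbb F}_{q^3}$ with $D_1=D_2=\{1\}$ and the parameters $l_1=g_k-1$, $m_1=g_k-2$ (admissible because $g_k\le g(H_k)$, while $B_1(H_k)\ge1$ holds by Proposition \ref{param2}(i)); this produces exactly the shape of $h_{BRT1,k}$, but with the true invariants $B_1(H_k)$ and $g(H_k)$ in place of $B_{1,k}$ and $g_k$. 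For the descent tower $\mathcal{F}/{\mathbb F}_q$, since one does not know whether $B_1(F_k)>0$, I would instead take $D_1=D_2=\{3\}$ with $l_3=\lfloor\frac{g_k-1}{3}\rfloor$, $m_3=\lfloor\frac{g_k-2}{3}\rfloor$ and $B_3(F_k)\ge1$ from Proposition \ref{param2}(i); discarding the (unknown, nonnegative) term $B_1(F_k)$ from the factor $(g+1)(q+1)-B_1$ leaves $(g_k+1)(q+1)$, which is the denominator of $h_{BRT2,k}$. In both cases the closed form of $Q_{r,s}$ given in Proposition \ref{BoundsSigma2} is what turns the bound on $\Sigma_2$ into the integral expressions displayed in the statement.

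It then remains to replace the true invariants by the explicit lower bounds $B_1(H_k)\ge B_{1,k}=(q^3-q^2-q+1)q^{k+1}$ (which follows from $B_1(H_k)\ge 3B_3(F_k)$), $B_3(F_k)\ge B_{3,k}=\frac{(q^3-q^2-q+1)q^{k+1}}{3}$, and $g(H_k)=g(F_k)\ge g_k$ (the left-hand inequality of (\ref{q3})); that is, to check that $h_{BRT}$ is non-decreasing in each of these arguments over the range in question. Monotonicity in the place counts $B_r$ is immediate: each binomial coefficient $\binom{B_r+j-1}{j}$ and each $Q_{r,s}$ is non-decreasing in $B_r$, $h_{BRT}$ is assembled from these by products and sums with nonnegative coefficients, and the denominator only decreases as $B_1$ grows. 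Monotonicity in the genus is the delicate point, because for $k\ge2$ the genus is only known to lie between $g_k$ and $\frac{(q^2+q-2)q^{k+1}}{2}$, and raising $g$ also enlarges the denominator. The way through is to exploit that $\Sigma_{2,inf}$ carries the factor $q^{g-1}$ (respectively $q^{3(g-1)}$ over ${\mathbb F}_{q^3}$): replacing $g_k$ by the true genus multiplies $\Sigma_{2,inf}$ by at least $q^{g(F_k)-g_k}$ (respectively its cube), while it only adds $(g(F_k)-g_k)(q+1)$ (respectively $(g(F_k)-g_k)(q^3+1)$) to the denominator. Since $B_{1,k}$ is much smaller than $(g_k+1)(q^3+1)$, a short cross-multiplication reduces the required inequality, after cancellation, to the assertion $\Sigma_{1,inf}\le\Sigma_{2,inf}$, which one verifies directly for each step of these towers, for instance by comparing the single binomial $\binom{B_{r,k}+\lfloor\frac{g_k-1}{r}\rfloor}{B_{r,k}}$ that makes up $\Sigma_{1,inf}$ against one well-chosen summand of the series $q^{g_k-1}Q_{r,\lfloor\frac{g_k-2}{r}\rfloor}$.

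The step I expect to be the main obstacle is precisely this genus-monotonicity argument: making it fully rigorous given only two-sided control of $g(F_k)$, and keeping track of the interplay between the genus sitting in the denominator and the genus sitting in the exponent $q^{g-1}$ of $\Sigma_{2,inf}$. Everything else is a mechanical substitution into Theorem \ref{mainmain}, with $B_{1,k}$, $B_{3,k}$ and $g_k$ read off directly from Proposition \ref{param2}. If one wishes to sidestep the monotonicity discussion, a variant is to keep the exact (unknown) genus in the denominator, bound it above by $\frac{(q^2+q-2)q^{k+1}}{2}$, and absorb the resulting negligible loss; but the precise form written in the Proposition is the one obtained along the monotonicity route.
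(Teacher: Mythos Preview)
Your approach is essentially the same as the paper's: apply Theorem \ref{mainmain} over ${\mathbb F}_{q^3}$ with $D_1=D_2=\{1\}$, $l_1=g_k-1$, $m_1=g_k-2$ for $\mathcal{H}$, and over ${\mathbb F}_q$ with $D_1=D_2=\{3\}$, $l_3=\lfloor\frac{g_k-1}{3}\rfloor$, $m_3=\lfloor\frac{g_k-2}{3}\rfloor$ for $\mathcal{F}$ (dropping the unknown $B_1(F_k)$ from the denominator), then replace the true invariants by the lower bounds of Proposition \ref{param2}. The one place where you go further than the paper is the genus-monotonicity step: the paper simply asserts that ``the functions $h_{BRT1,k}$ and $h_{BRT2,k}$ with respect to the genus are increasing,'' whereas you sketch an argument for it via the exponential factor $q^{g-1}$ dominating the linear growth of the denominator --- so the obstacle you flag is real, but the paper does not treat it any more carefully than you do.
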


\begin{proof}
The bound $h_{BRT1,k}$ follows directly from Theorem \ref{mainmain} by using the lower bound on the genus given 
by Proposition \ref{param2}  since the functions $h_{BRT1,k}$ and $h_{BRT2,k}$ with respect to 
the genus are increasing. Moreover, we apply 
Theorem \ref{mainmain} by using the lower bound on the number of places of degree one given by Proposition \ref{param2} 
with the parameters $l_1=g_k-1$ and $m_1=g_k-2$. The bound $h_{BRT2,k}$ follows directly from Theorem \ref{mainmain} 
as in the previous case by using the lower bound on the number of places of degree $3$ given by Proposition \ref{param2} 
with the parameters $l_3=\lfloor \frac{g_k-1}{3}\rfloor$ and $m_3=\lfloor \frac{g_k-2}{3}\rfloor$. 
\end{proof}

\medskip

{\bf Numerical estimations:} 

\medskip

Here are some effective numerical estimations of  the class numbers of some steps of the towers 
$\mathcal{H}/{\mathbb F}_{q^3}$ and $\mathcal{F}/{\mathbb F}_q$ given by Proposition \ref{hparam2} with $q=2$.

\medskip

\begin{center}
\begin{tabular}{|l|c|c|c|c|c|r|}
\hline
$q^3$  & step k & $g_k$ &  $B_{1,k}$  & $l_1$ & $m_1$ & $h_{BRT1,k}$   \\
\hline
8 &  2  &    5 &     24  &  4  &   3 &   125 537 \\
 \hline
8 &    3   &   13   &   48  &   12  &  11 &   $2.556 \times 10^{13}$  \\
 \hline     
8 &      4    &    29 &      96  &   28  &    27 &   $2.010\times 10^{30}$ \\
 \hline      
\end{tabular}
\end{center}

\medskip

\begin{center}
\begin{tabular}{|l|c|c|c|c|c|r|}
\hline
q  & step k & $g_k$ &  $B_{3,k}$  & $l_3$ & $m_3$ & $h_{BRT2,k}$   \\
\hline 
2 &  2  &    5 &     8  &  1  &   1 &   3 \\
 \hline
2 &    3   &   13   &   16  &   4  &  3 &   771  \\
 \hline     
2 &      4    &  29  &      32  &   9  &    9 &   212 127 395 \\
 \hline      
\end{tabular}
\end{center}

\medskip

\subsection{General case $N=\{r_i\}_{i}$}

In this section, we first recall a method for constructing towers of function fields  ${\mathcal F}/{\mathbb F}_q$ with various invariants 
$\beta_r({\mathcal F})$ being positive. This method is based upon the {\it compositum} of asymptotically good towers with different 
suitable  extensions. Then we give some examples of explicit towers having several positive invariants $\beta_{r_i}$, 
for which we describe precisely several parameters (such as genus, rational places, etc.).  
Then we give an estimation of class number of each step of these towers. 

Let us recall a useful lemma concerning the genera in towers (See \cite[Theorem 3.6]{gastth}) 
which we will use often in the subsequent sections.

\begin{lemme} \label{lemGenus}
              Let ${\mathcal F}/{\mathbb F}_q=(F_k/{\mathbb F}_q)_{k\geq 0}$ be a tower and  $E/F_0$ be a finite separable 
extension such that $R({\mathcal F})$ is finite and all $P\in R({\mathcal F})$ are tame in $E$. 
Suppose that ${\mathcal E}/{\mathbb F}_q=(E_k/{\mathbb F}_q)_{k\geq 0}$, with $E_k=EF_k$, is a composite tower. Set $m=[E:F]$. 
Then for all $k\geq 0$, 
\begin{equation*} \label{genusGS}
            g(E_k)=\bigg(g(E)-mg(F)+m-\frac{s+2}{2}\bigg) [E_k:E]+m(g(F_k)-1)+\frac{r(k)}{2}+1,
\end{equation*}
where
\begin{eqnarray*} \label{deltaGS}
            s&=&\sum_{\substack{Q\in \mathbb{P}(E)\\ Q\cap F\in R({\mathcal F})}} d(Q|Q\cap F)\deg Q\quad  \textrm{ and }\\ 
              r(k)&=&\sum_{P\in R({\mathcal F})}\sum_{\substack{P_k\in \mathbb{P}(F_k)\\ 
P_k|P}}\deg P_k \sum_{\substack{\;Q_k\in \mathbb{P}(E_k)\\ Q_k|P_k}}(e(Q_k|P_k)-1)f(Q_k|P_k).
\end{eqnarray*}
\end{lemme}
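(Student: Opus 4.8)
\textbf{Plan of proof for Lemma~\ref{lemGenus}.} The statement is the genus formula for the composite tower ${\mathcal E}/{\mathbb F}_q$ with $E_k=EF_k$, and the natural strategy is to apply the Hurwitz Genus Formula to the extension $E_k/F_k$ and then use the fact that $E/F$ and $F_k/F$ are linearly disjoint to control the ramification. First I would write the Hurwitz genus formula for $E_k/F_k$:
\[
2g(E_k)-2=[E_k:F_k]\,(2g(F_k)-2)+\deg\operatorname{Diff}(E_k/F_k).
\]
Since ${\mathcal E}/{\mathbb F}_q$ is a composite tower, linear disjointness gives $[E_k:F_k]=[E:F]=m$ for all $k$, and the only places of $F_k$ that can ramify in $E_k$ are those lying over places $P\in{\mathbb P}(F_0)$ at which $E/F_0$ ramifies. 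By hypothesis $R({\mathcal F})$ is finite and every $P\in R({\mathcal F})$ is tame in $E$; one then has to argue that the different exponents in $E_k/F_k$ at such places are governed on the one hand by the ramification $Q|Q\cap F$ in $E/F$ (the "transferred'' part, tame) and on the other hand by the wild ramification coming from $F_k/F$ inside the tower, captured by the quantity $r(k)$.

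The key step, and the one I expect to be the main obstacle, is the bookkeeping of $\deg\operatorname{Diff}(E_k/F_k)$: one must split the places $P_k\in{\mathbb P}(F_k)$ above a given $P\in R({\mathcal F})$, and for each $Q_k\in{\mathbb P}(E_k)$ above $P_k$ use the tameness of $P$ in $E$ together with Abhyankar's lemma to write the different exponent $d(Q_k|P_k)$ in terms of $e(Q_k|P_k)$ and the "fixed'' tame contribution coming from the place of $E$ below $Q_k$. Concretely, because $P$ is tame in $E$, the extension $E_k/F_k$ is, locally at $P_k$, a compositum of a tame part (coming from $E$) and whatever $F_k/F$ contributes; Abhyankar's lemma then lets one express the different exponent as a sum of a term depending only on $Q=Q_k\cap E$ (giving rise, after summing $d(Q|Q\cap F)\deg Q$, to the constant $s$) and a term $(e(Q_k|P_k)-1)f(Q_k|P_k)$ of the transferred wild ramification (giving rise to $r(k)$). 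Summing $d(Q_k|P_k)\deg Q_k$ over all $Q_k$, all $P_k|P$, and all $P\in R({\mathcal F})$, and keeping track of the factor $[E_k:E]$ that counts how the fixed part of the different is replicated, one obtains
\[
\deg\operatorname{Diff}(E_k/F_k)=s\,[E_k:E]+r(k)
\]
together with the analogous identity $\deg\operatorname{Diff}(E/F)=s$, which also yields $2g(E)-2=m(2g(F)-2)+s$.

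Finally I would substitute everything back. From the Hurwitz formula for $E_k/F_k$,
\[
2g(E_k)-2=m\,(2g(F_k)-2)+s\,[E_k:E]+r(k),
\]
and from $2g(E)-2=m(2g(F)-2)+s$ one solves $s=2g(E)-2-m(2g(F)-2)=2g(E)-2mg(F)+2m-2$. Substituting this expression for $s$, dividing by $2$, and rearranging the terms gives
\[
g(E_k)=\Bigl(g(E)-mg(F)+m-\tfrac{s+2}{2}\Bigr)[E_k:E]+m(g(F_k)-1)+\tfrac{r(k)}{2}+1,
\]
which is precisely the claimed formula (the coefficient of $[E_k:E]$ being rewritten in terms of $s$ only for notational compactness). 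The remaining care is purely formal: checking that $r(k)$ as defined indeed collects exactly the wild contributions and that no place outside $R({\mathcal F})$ contributes, which follows from the hypothesis that $R({\mathcal F})$ is the full ramification locus of the tower and from Abhyankar's lemma at the tame places of $E$.
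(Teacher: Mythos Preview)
The paper does not prove this lemma; it merely recalls it from \cite[Theorem~3.6]{gastth}. Your overall strategy---Hurwitz for $E_k/F_k$ plus Abhyankar's lemma---is the right one, but the decomposition of $\deg\operatorname{Diff}(E_k/F_k)$ is wrong in a way that does not lead to the stated formula.

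The error is in two linked claims. First, you assert that ``no place outside $R({\mathcal F})$ contributes'' to $\operatorname{Diff}(E_k/F_k)$; but a place $P\in\operatorname{Ram}(E/F_0)\setminus R({\mathcal F})$ is unramified in every $F_k$, so each $P_k\mid P$ in $F_k$ ramifies in $E_k$ exactly as $P$ ramifies in $E$, and these places \emph{do} contribute. Second, you claim $\deg\operatorname{Diff}(E/F)=s$; by definition $s$ is only the portion of that different supported above $R({\mathcal F})$, so in general $\deg\operatorname{Diff}(E/F)=s+s'$ with $s'$ the contribution from $\operatorname{Ram}(E/F_0)\setminus R({\mathcal F})$. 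The correct decomposition is
\[
\deg\operatorname{Diff}(E_k/F_k)=s'\,[E_k:E]+r(k),
\]
not $s\,[E_k:E]+r(k)$: above $R({\mathcal F})$ the extension $E_k/F_k$ is tame by Abhyankar (since $e(Q_k|P_k)$ divides the tame index $e(Q|P)$), so $d(Q_k|P_k)=e(Q_k|P_k)-1$ and the total is exactly $r(k)$; above $\operatorname{Ram}(E/F_0)\setminus R({\mathcal F})$ the different of $E/F$ is replicated $[F_k:F_0]=[E_k:E]$ times, giving $s'\,[E_k:E]$. Since by Hurwitz for $E/F$ one has $s'=\deg\operatorname{Diff}(E/F)-s=2\bigl(g(E)-mg(F)+m-\tfrac{s+2}{2}\bigr)$, substituting this into Hurwitz for $E_k/F_k$ gives the formula. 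Your version, by contrast, yields coefficient $\tfrac{s}{2}$ in front of $[E_k:E]$ rather than the required $\tfrac{s'}{2}$; a quick check at $k=0$ (where $r(0)=s$) already shows the discrepancy, since your formula would force $\deg\operatorname{Diff}(E/F)=2s$.
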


\subsubsection{Family  1}
We begin with an example given in \cite{hesttu}.

\begin{proposition}\label{lemGS}
             Let $N\subset {\mathbb N}$ be a finite set, $m=\sum_{i\in N}i$, and $q$ be a prime power with $(m,q)=1$. 
Consider the tower ${\mathcal F}/{\mathbb F}_{q^2}$ given in Proposition \ref{optimal.exm}.
              Let $E=F_0(z)$ with $z$ a root of the polynomial 
\begin{equation} \label{definingGS}
                  \varphi(T)=\prod_{i\in N}g_i(T)-x_0^{q^2}+x_0 \in F_0[T],
\end{equation}
             where $g_i(T)\in {\mathbb F}_{q^2}[T]$ is a monic, irreducible polynomial of $\deg g_i(T)=i$, 
for each $i\in N$. Then the sequence ${\mathcal E}/{\mathbb F}_{q^2}=(E_k/{\mathbb F}_{q^2})_{k\geq 0}$, with $E_k=EF_k$, 
is a composite tower such that
  \begin{itemize}
 \item[(i)] $B_1(E_k)\geq 1$ for all $k\geq 1$ and $B_i(E_k)\geq q^k(q^2-1)$ for all $i\in N$ and all $k\geq 0$
 \item[(ii)] for all $k\geq 2$, one has
 $$g(E_k)=\left\{
\begin{array}{ll}
\frac{1}{2}(m(q^2+2q+2)-q^2)q^k-m(q+2)q^{\frac{k}{2}}+
\frac{m+1}{2} &\hbox{if }  k \hbox{ even};\\ 
\frac{1}{2}(m(q^2+2q+2)-q^2)q^k-\frac{m}{2}(q^2+3q+1)q^{\frac{k-1}{2}}+\frac{m+1}{2} &\hbox{if } k \hbox{ odd}.
\end{array}
\right .
$$

\item[(iii)]  \[ \beta_i({\mathcal E})=\frac{2(q^2-1)}{m(q^2+2q+2)-q^2}
 \textrm{ for all $i\in N$ and  $\beta_i({\mathcal E})=0$ for all $i\notin N$.}\] 
\end{itemize}
\end{proposition}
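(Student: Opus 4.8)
The plan is to work with two descriptions of the extension $E/F_0$. Write $F_0=\mathbb{F}_{q^2}(x_0)$, $f(T)=\prod_{i\in N}g_i(T)$ (so $\deg f=m$ and $\gcd(m,q)=1$) and $c=x_0^{q^2}-x_0$. The defining relation $\varphi(z)=0$ is $f(z)=c$, which also presents $E=\mathbb{F}_{q^2}(z,x_0)$ as the additive Artin--Schreier extension $x_0^{q^2}-x_0=f(z)$ of $\mathbb{F}_{q^2}(z)$ of degree $q^2$, ramified only over $(z=\infty)$, where it is totally ramified. Since $[F_0:\mathbb{F}_{q^2}(c)]=q^2$ is coprime to $[\mathbb{F}_{q^2}(z):\mathbb{F}_{q^2}(c)]=\deg f=m$, the subfields $F_0$ and $\mathbb{F}_{q^2}(z)$ of $E$ are linearly disjoint over $\mathbb{F}_{q^2}(c)$; hence $\varphi(T)$ is irreducible over $F_0$, $[E:F_0]=m$, and $\mathbb{F}_{q^2}$ is the full constant field of $E$. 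As $\gcd(m,q^k)=1$, coprimality again gives linear disjointness of $E/F_0$ and $F_k/F_0$, so $\mathcal{E}/\mathbb{F}_{q^2}=(E_k)_{k\ge0}$, $E_k=EF_k$, is a composite tower with $[E_k:E]=q^k$. Finally, the only pole of $f$, at $z=\infty$, has order $m$ coprime to $p$, so the standard genus formula for additive Artin--Schreier extensions gives $g(E)=\tfrac12(q^2-1)(m-1)$.

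Next I would determine the ramification needed in Lemma \ref{lemGenus}. Every rational place $P=(x_0=\alpha)$ of $F_0$ has $\alpha\in\mathbb{F}_{q^2}$, hence $\alpha^{q^2}-\alpha=0$, so $\varphi$ reduces modulo $P$ to $\prod_{i\in N}g_i(T)$; since the $g_i$ are irreducible over $\mathbb{F}_{q^2}$ of pairwise distinct degrees, they are pairwise coprime and separable, so this reduction is separable. Therefore every finite rational place of $F_0$, in particular every finite place of $R(\mathcal{F})$, is unramified in $E/F_0$ and splits there into exactly one place of degree $i$ for each $i\in N$. The place $(x_0=\infty)$, on the other hand, has a unique place above it in $E$, which lies over $(z=\infty)$; comparing valuations in $f(z)=x_0^{q^2}-x_0$ shows it is totally and (as $\gcd(m,p)=1$) tamely ramified in $E/F_0$, of degree $m$ and different exponent $m-1$. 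Hence $s=m-1$ in Lemma \ref{lemGenus}. Since $(x_0=\infty)$ is totally ramified in $\mathcal{F}$ while $E/F_0$ is unramified above every other place of $R(\mathcal{F})$, Abhyankar's lemma (the tame degree $m$ part above $(x_0=\infty)$ being coprime to the $q^k$ part coming from $F_k/F_0$) shows that over $(x_0=\infty)$ the extension $E_k/F_k$ is again totally and tamely ramified of degree $m$, and unramified above all remaining places of $F_k$ over $R(\mathcal{F})$; thus $r(k)=m-1$ for all $k$. Substituting $g(F_0)=0$, $g(E)=\tfrac12(q^2-1)(m-1)$, $s=r(k)=m-1$, $[E_k:E]=q^k$ and the explicit $g(F_k)=g(H_k)$ of Proposition \ref{optimal.exm}(iii) (for $r=2$) into Lemma \ref{lemGenus} and simplifying yields the two claimed expressions for $g(E_k)$, proving (ii). Part (i) drops out: the place of $E_k$ above $(x_0=\infty)$ is rational, so $B_1(E_k)\ge1$; and each of the $q^2-1$ rational places of $F_0$ in $\mathrm{Split}(\mathcal{F})$ (those responsible for the leading term of $B_1(F_k)$, cf. \cite{gast} and Proposition \ref{optimal.exm}(ii)) splits in $F_k$ into $q^k$ rational places, above each of which $E_k$ has exactly one place of degree $i$, giving $B_i(E_k)\ge q^k(q^2-1)$ for $i\in N$.

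For (iii), the bound $\beta_i(\mathcal{E})\ge\frac{2(q^2-1)}{m(q^2+2q+2)-q^2}$ for $i\in N$ is immediate from (i) and (ii). For the reverse inequality, and for $\beta_i(\mathcal{E})=0$ when $i\notin N$, I would use that a place $Q$ of $E_k$ lies over a place $P$ of $F_k$ with $\deg P\mid\deg Q$ and that $P$ has at most $m$ places of $E_k$ above it. The places $P$ of degree one over $\mathrm{Split}(\mathcal{F})$ contribute, as above, exactly one place of $E_k$ of degree $j$ for each $j\in N$ and none else; the remaining rational places of $F_k$ number $O(1)$; and the places $P$ of degree $d\ge2$ contribute at most $m\sum_{2\le d\le i,\,d\mid i}B_d(F_k)$. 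Hence $B_i(E_k)=(q^2-1)q^k+o(q^k)$ if $i\in N$ and $B_i(E_k)=o(q^k)$ if $i\notin N$, provided $B_d(F_k)=o(q^k)$ for $d\ge2$; this holds because $\beta_1(\mathcal{F}/\mathbb{F}_{q^2})=q-1$ attains the Drinfeld--Vladut bound, so in the Tsfasman--Vladut basic inequality $\sum_{d\ge1}\frac{d\,\beta_d(\mathcal{F})}{q^d-1}\le1$ the $d=1$ term already equals $1$, forcing $\beta_d(\mathcal{F})=0$ for all $d\ge2$. Dividing by $g(E_k)$ and using (ii) gives (iii). The hard part is the ramification bookkeeping of the second paragraph: establishing $s=r(k)=m-1$ from the separability of $\prod_{i\in N}g_i\bmod P$ together with Abhyankar's lemma. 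Once this is in place, (ii) is a substitution, (i) a splitting count, and (iii) follows from the asymptotic place count controlled by the vanishing of $\beta_d(\mathcal{F})$ for $d\ge2$.
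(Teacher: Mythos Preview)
Your proof is correct and follows the same core strategy as the paper for part (ii): verify the tameness hypothesis of Lemma \ref{lemGenus}, compute $s=r(k)=m-1$ from the fact that among the places of $R(\mathcal{F})=\{P_0,P_\infty\}$ only $P_\infty$ ramifies (totally, tamely) in $E/F_0$ while $P_0$ is unramified, and then substitute $g(E)=\tfrac12(q^2-1)(m-1)$ and the explicit $g(F_k)$ from Proposition \ref{optimal.exm}(iii).

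The differences are matters of presentation rather than method. The paper simply cites \cite[Example 3.15]{hesttu} for (i), (iii), for $g(E)$, and for the ramification behaviour of $P_0$ and $P_\infty$ in $E$, whereas you supply self-contained arguments: the Artin--Schreier description $x_0^{q^2}-x_0=f(z)$ over $\mathbb{F}_{q^2}(z)$ to get irreducibility of $\varphi$, the full constant field, and $g(E)$; the coprime-degree argument for linear disjointness of $E/F_0$ and $F_k/F_0$; and, for (iii), the Drinfeld--Vl\u{a}du\c{t} saturation $\beta_1(\mathcal{F})=q-1$ together with the basic inequality to force $\beta_d(\mathcal{F})=0$ for $d\ge2$ and hence pin down $\beta_i(\mathcal{E})$ exactly. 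These are legitimate and slightly more informative than the bare citation. One small wording issue: when you say the place of $E$ above $(x_0=\infty)$ is ``of degree $m$'', you mean ramification index $m$; its residue degree is $1$, which is what you use later for $B_1(E_k)\ge1$.
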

\begin{proof}  The assertions (i) and (iii) are given in  \cite[Example 3.15]{hesttu}. 
We just need to prove (ii). For this we use Lemma \ref{lemGenus}.
It is given in \cite[Example 3.15]{hesttu} that the genus
\begin{equation}\label{gE}
g(E)=\frac{(m-1)(q^2-1)}{2}.
\end{equation}
Moreover, in that example it is shown that $P_0$ is unramified in $E$ and $P_\infty$ is totally 
ramified in $E$. We know from \cite{gast} that 
 \[R({\mathcal F})=\{P_0,P_\infty\}\quad \textrm{where $P_0$ (resp. $P_\infty$) is the zero (resp. the pole) of $x_0$ in $F_0$}.\]
Hence, by using Abhyankar's Lemma and Dedekind's Different Theorem \cite{stic2}, 
we obtain that the $s$ and $r(k)$ defined in Lemma \ref{lemGenus} are as follows: 
\begin{equation} \label{sr}
            s=r(k)=m-1 \quad \textrm{ for all $k\geq 0$.}
\end{equation}
Now applying Lemmas \ref{lemGenus} and \ref{lemGS}(iii) with $r=2$, (\ref{gE}) and (\ref{sr})  give the desired result.
\end{proof}

\begin{proposition}\label{hparam3}
The steps of the tower 
$\mathcal{E}/{\mathbb F}_{q^2}=(E_k/{\mathbb F}_{q^2})_{k\geq 0}$ 
defined in Proposition \ref{lemGS} have a class number such that
$$h(E_k/{\mathbb F}_{q^2})\geq h_{BRT1,k}$$ 
where
\begin{equation*}
\begin{split}
h_{BRT1,k}= 
\frac{(q^2-1)^2}{(g(E_k)+1)(q^2+1)-1} 
\bigg(
\prod_{i\in N\cup \{1\}} 
 \left(\begin{array}{c}
 B_{i,k}+m_i\\
B_{i,k}
\end{array}\right)
 \\
 + 
q^{2(g(E_k)-1)} \prod_{i\in N \cup \{1\} } \left [ \left( \frac{q^{2i}}{q^{2i}-1}\right)^{B_{i,k}}
-  \right. \\ \left. B_{i,k}\left(
\begin{array}{c}
 B_{i,k}+m_i\\
 B_{i,k}
\end{array}
\right) \int_{0}^{\frac{1}{q^{2i}}} \frac{(\frac{1}{q^{2i}}-t)^{m_i}}{(1-t)^{B_{i,k}+m_i+1}}\rm dt. \right ]
\biggr)
\end{split}
\end{equation*}
 where $B_{1,k}=1$, for all $1\neq i\in N$, $B_{i,k}= q^k(q^2-1)$ and $\sum_{i\in N\cup\{1\}} im_i\leq g(E_k)-2$ for all $k\geq 1$.
\end{proposition}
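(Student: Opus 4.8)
The plan is to obtain this statement as a direct specialization of Theorem \ref{mainmain} to the composite tower $\mathcal{E}/{\mathbb F}_{q^2}$ of Proposition \ref{lemGS}, carried out over the constant field ${\mathbb F}_{q^2}$, so that the role of ``$q$'' in Theorem \ref{mainmain} is played throughout by $q^2$ (a place of degree $i$ then contributes the factor $\left(\frac{q^{2i}}{q^{2i}-1}\right)^{B_i}$ and the global power of the base is $(q^2)^{g-1}=q^{2(g(E_k)-1)}$, exactly as in the claimed formula). First I would fix $k\geq 1$ and choose, in the notation of Theorem \ref{mainmain}, the index sets $D_1=D_2=N\cup\{1\}$ and the specialization $l_r=m_r$ for every $r\in N\cup\{1\}$. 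The admissibility hypotheses (2) and (4) hold because, by Proposition \ref{lemGS}(i), $B_1(E_k)\geq 1$ and $B_i(E_k)\geq q^k(q^2-1)\geq 1$ for each $i\in N$; hypotheses (5)--(6) are precisely the assumption $\sum_{i\in N\cup\{1\}} i\,m_i\leq g(E_k)-2$ made in the statement, which in particular forces every $i$ with $m_i\geq 1$ to satisfy $i\leq g(E_k)-2$, giving (1) and (3) (for the $i$ with $m_i=0$ the associated factors $\binom{B_i+m_i}{B_i}$ and $Q_{i,m_i}$ are both equal to $1$ and may simply be dropped). Substituting the exact value of $g(E_k)$ from Proposition \ref{lemGS}(ii) into the denominator and into the exponents then produces an inequality of exactly the shape of $h_{BRT1,k}$, but still written with the true place counts $B_1(E_k)$ and $B_i(E_k)$ rather than with $B_{1,k}=1$ and $B_{i,k}=q^k(q^2-1)$.

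The second step is a monotonicity argument to pass from the true place counts to the explicit lower bounds. Here I would invoke the observation already used in the proof of Proposition \ref{propohgastsur4et2}, namely that the lower bounds for $h$ coming from Section \ref{sec:class} are non-decreasing in the number of effective divisors supported on places of bounded degree. Concretely, for fixed $m_r$ the binomial $\binom{B_r+m_r}{B_r}$ is a non-decreasing function of $B_r$, and so is each truncated sum $Q_{r,m_r}$ of (\ref{Qrs}) (with $q$ replaced by $q^2$), being a finite sum of the terms $q^{-2rj}\binom{B_r+j-1}{j}$, each non-decreasing in $B_r$; finally the prefactor $\frac{(q^2-1)^2}{(g(E_k)+1)(q^2+1)-B_1}$ is itself non-decreasing in $B_1$. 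Consequently, replacing $B_1(E_k)$ by $1$ and each $B_i(E_k)$ by $q^k(q^2-1)$ simultaneously in the $\Sigma_1$-part, in the $\Sigma_2$-part expressed through the integral form of $Q_{r,m_r}$ given in Proposition \ref{BoundsSigma2}, and in the denominator, can only decrease the right-hand side, which yields precisely $h(E_k/{\mathbb F}_{q^2})\geq h_{BRT1,k}$.

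There is no real obstacle here beyond careful bookkeeping: one must keep the substitution ``$q\leadsto q^2$'' consistent at every occurrence (base field, the $q^{2i}$ in the geometric factors, the $q^{2(g(E_k)-1)}$ prefactor, and the $q^2$'s in the Lachaud--Martin-Deschamps prefactor), make sure it is the integral representation of $Q_{r,m_r}$ from Proposition \ref{BoundsSigma2} that is inserted so that the displayed expression matches verbatim, and remember to put the index $1$ into both $D_1$ and $D_2$ (legitimate since $B_1(E_k)\geq 1$) so that the rational place is actually exploited. Once these identifications are made, the conclusion follows from Theorem \ref{mainmain} and the monotonicity remark with no further computation; the genuine mathematical content lies entirely in Theorem \ref{mainmain} and in Proposition \ref{lemGS}.
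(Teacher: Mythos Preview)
Your proposal is correct and follows essentially the same approach as the paper: the paper's proof consists of a single sentence stating that the bound follows directly from Theorem \ref{mainmain} by taking $l_i=m_i$ for $i\in N\cup\{1\}$ and using the lower bounds for the number of places from Proposition \ref{lemGS}. Your write-up is simply a fleshed-out version of this, with the explicit verification of the admissibility conditions of Theorem \ref{mainmain} and the monotonicity argument (implicit in the paper when it invokes ``the lower bounds for the number of places'') spelled out in detail.
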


\begin{proof}
The bound $h_{BRT1,k}$ follows directly from Theorem \ref{mainmain} by taking 
$l_i=m_i$ for $i\in N\cup \{ 1\}$ and the lower bounds for the number of places of each degree given by 
Proposition \ref{lemGS}.
\end{proof}

\medskip

{\bf Numerical estimations:} 

\medskip

Here are some effective numerical estimations of  the class numbers of some steps $E_k/{\mathbb F}_{q^2}$ of the tower 
$\mathcal{E}/{\mathbb F}_{q^2}$ given by Proposition \ref{lemGS} with $q=2$ and $N=\{2,3\}$. 
These estimations are given by using Proposition \ref{hparam3}.
 
\medskip

\begin{center}
\begin{tabular}{|l|c|c|c|c|c|c|c|c|r|}
\hline
$q^2$  & step k & $g(E_{k})$ &  $B_{1,k}$  & $B_{2,k}$ & $B_{3,k}$ & $m_1$& $m_2$ & $m_3$ & $h_{BRT1,k}$   \\
\hline
4 &  2  &    55 &     1  &  12  &   12 &  13 & 11 & 6 &  3.65792120927 $\times 10^{31}$\\
 \hline
4 &    3   &   132   &   1  &   24  &  24 &  29 & 25 & 17 &  9.19830033703 $\times 10^{77}$ \\
 \hline     
 \end{tabular}
\end{center}

\medskip

\subsubsection{Family  2}

This  family of towers is based upon the following result:
\begin{proposition} \label{construction} Let $F/{\mathbb F}_q$ be an algebraic function field with a 
finite set of places $S$ and a finite separable extension $F'/{\mathbb F}_q$. Further let $N\subset{\mathbb N}$ be a finite set with 
                   $m=\sum_{i\in N}i$. Suppose that $F/{\mathbb F}_q$ has a rational place $Q$ 
which has a rational extension $Q'$ in $F'$ such that $(m,e(Q'|Q))=1$.    
                   Define     
                   $E=F(z)$ where $z$ is a root of the polynomial 
                  \[\varphi(T)=\prod_{i\in N} p_i(T)-\alpha \in F[T]\]
                which has the following properties:
 \begin{itemize}
           \item[(a)] for all $P\in S$, each $p_i(T)$ is an irreducible monic polynomial over $k(P)$ of $\deg p_i(T)=i$.
           \item[(b)] $\alpha \in F$ and $\alpha(P)=0$ for all $P\in S$,
           \item[(c)] $v_Q(\alpha)<0$ and $(v_Q(\alpha),m)=1$.
 \end{itemize}
             Set $E'=EF'$. Then $E'/F$ is a separable extension such that 
 \begin{itemize}
            \item[(i)] $Q$ is totally ramified in $E$, $[E:F]=[E':F']=m$ and ${\mathbb F}_q$ is algebraically closed in $E'$.  
            \item[(ii)] Each place $P\in S$ has exactly one extension $Q_i \in E$ with 
            \[ \deg Q_i=i\deg P \textrm{ for all $i\in N$.}\]
            \item[(iii)] If $P$ splits completely  in $F'$, then each extension of $P$ in $E$ splits completely in $E'$.
 \end{itemize}
\end{proposition}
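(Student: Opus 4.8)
The plan is to extract everything from two kinds of local behaviour: an Eisenstein‑type ramification at $Q$ (and then at $Q'$), and a Kummer‑type splitting at the places of $S$. First I would look at $Q$. Since $\prod_{i\in N}p_i(T)$ is monic of degree $\sum_{i\in N}i=m$ and the $p_i$ are regular at $Q$, while $v_Q(\alpha)<0$, the Newton polygon of $\varphi$ with respect to $v_Q$ is the single segment from $(0,v_Q(\alpha))$ to $(m,0)$, whose slope $-v_Q(\alpha)/m$ is in lowest terms because $(v_Q(\alpha),m)=1$. Hence $\varphi$ is irreducible over the completion $\widehat{F}_Q$, so it is irreducible over $F$ and $[E:F]=m$, and the unique place $\widetilde Q$ of $E$ above $Q$ is totally ramified with $k(\widetilde Q)=k(Q)=\mathbb{F}_q$; in particular $\mathbb{F}_q$ is algebraically closed in $E$. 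For separability, note that for $P\in S$ the reduction $\overline{\varphi}=\prod_{i\in N}\overline{p_i}$ is separable (distinct irreducible factors over the perfect field $k(P)$), so $\overline{\varphi}'\neq 0$, whence $\varphi'=(\prod_{i\in N}p_i)'\neq 0$ and $E/F$ is separable.

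Next, for part (ii), fix $P\in S$. By (b), $\alpha\in\mathcal{O}_P$ with $\alpha(P)=0$, so $\varphi\in\mathcal{O}_P[T]$ and $\overline{\varphi}=\prod_{i\in N}\overline{p_i}$ in $k(P)[T]$ is a product of distinct irreducible factors of degrees $i\in N$. Kummer's theorem applied to $E=F(z)$ then yields, for each $i\in N$, exactly one place $Q_i$ of $E$ above $P$, unramified, with $f(Q_i\mid P)=\deg\overline{p_i}=i$, so $\deg Q_i=i\deg P$, and these are all the places of $E$ above $P$.

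For the remaining assertions of (i) I would pass to completions at $Q$. As $Q'$ is rational over $\mathbb{F}_q=k(Q)$ we have $f(Q'\mid Q)=1$, so $\widehat{F'}_{Q'}/\widehat{F}_Q$ is totally ramified of degree $e(Q'\mid Q)$, while $\widehat{F}_Q(z)/\widehat{F}_Q$ is totally ramified of degree $m$. Since ramification indices are multiplicative in towers, the ramification index over $\widehat{F}_Q$ of the compositum $M:=\widehat{F}_Q(z)\cdot\widehat{F'}_{Q'}=\widehat{F'}_{Q'}(z)$ is divisible by $\mathrm{lcm}(m,e(Q'\mid Q))=m\,e(Q'\mid Q)$, whereas $[M:\widehat{F}_Q]\leq m\,e(Q'\mid Q)$. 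Hence $M/\widehat{F}_Q$ is totally ramified of degree $m\,e(Q'\mid Q)$, so $[M:\widehat{F'}_{Q'}]=m$; therefore $\varphi$ is irreducible over $\widehat{F'}_{Q'}$, hence over $F'$. Thus $[E':F']=m$ (so $E$ and $F'$ are linearly disjoint over $F$ and $[E':E]=[F':F]$), the place $Q'$ is totally ramified in $E'$ with residue field $k(Q')=\mathbb{F}_q$, so $\mathbb{F}_q$ is algebraically closed in $E'$, and $E'/F$ is separable as a compositum of the separable extensions $E/F$ and $F'/F$. This detour through completions avoids the tameness hypothesis that a direct appeal to Abhyankar's Lemma would require.

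Finally, for (iii), suppose $P\in S$ splits completely in $F'$ and let $P'\mid P$ in $F'$, so $e(P'\mid P)=f(P'\mid P)=1$ and $\widehat{F}_P\to\widehat{F'}_{P'}$ is an isomorphism. Over $\widehat{F}_P$, Hensel's Lemma applied to the separable reduction $\overline{\varphi}=\prod_{i\in N}\overline{p_i}$ gives $\varphi=\prod_{i\in N}\Phi_i$ with $\Phi_i\in\widehat{F}_P[T]$ monic of degree $i$, $\Phi_i\equiv\overline{p_i}$, each $\Phi_i$ irreducible over $\widehat{F}_P$ with $\widehat{F}_P[T]/(\Phi_i)$ the unramified extension of degree $i$. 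This factorization describes simultaneously the places $Q_i$ of $E=F(z)$ above $P$, with $\widehat{E}_{Q_i}=\widehat{F}_P[T]/(\Phi_i)$, and, through $\widehat{F}_P\cong\widehat{F'}_{P'}$, the places of $E'=F'(z)$ above $P'$; matching the factor $\Phi_i$ on both sides, the corresponding place $R$ of $E'$ satisfies $\widehat{E'}_R\cong\widehat{F}_P[T]/(\Phi_i)=\widehat{E}_{Q_i}$, so $R$ lies over $Q_i$ with $e(R\mid Q_i)=f(R\mid Q_i)=1$. Letting $P'$ range over the $[F':F]$ places of $F'$ above $P$ produces $[F':F]=[E':E]$ distinct places over $Q_i$, each with $e=f=1$, so $Q_i$ splits completely in $E'$. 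The delicate point is precisely this final step: keeping the local data consistent across $F$, $F'$, $E$ and $E'$ and certifying exhaustiveness of the list of places via the identity $[E':E]=[F':F]$; by contrast Steps 1 and 3 are routine once the Newton‑polygon picture at $Q$ is in place (the only genuinely external input being that the $p_i$ are regular at $Q$, which holds whenever, as in the intended applications, their coefficients lie in the constant field), and Step 2 is a direct application of Kummer's theorem.
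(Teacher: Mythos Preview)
Your proof is correct and follows the same skeleton as the paper's: a generalized Eisenstein criterion at $Q$ for (i), Kummer's theorem for (ii), and the standard ``complete splitting is preserved under compositum'' fact for (iii). The paper is terser: it simply cites the generalized Eisenstein criterion, Abhyankar's Lemma, Kummer's theorem, and \cite[Proposition 3.9.6(a)]{stic2}, whereas you unpack each step through Newton polygons, completions, and Hensel's Lemma.

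One point worth noting: your detour through completions to show $Q'$ is totally ramified in $E'$ is not needed. You worry that Abhyankar's Lemma requires tameness, but the hypothesis $(m,e(Q'\mid Q))=1$ already forces at least one of $m$, $e(Q'\mid Q)$ to be prime to the characteristic $p$ (otherwise $p$ would divide their gcd), so one of the two extensions $E/F$, $F'/F$ is tamely ramified at $Q$ and Abhyankar applies directly---this is exactly what the paper does. Your completion argument is of course valid and arguably more self-contained, but the paper's route is shorter once you observe this. Similarly, your argument for (iii) is essentially a proof of the proposition the paper cites.
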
 
 \begin{proof} (i) By applying the generalized Eisenstein's Irreducibility Criterion \cite{stic2} 
with the place $Q$ and using (c), we obtain that   
                       $\varphi(T)$ is irreducible over $F$ and $Q$ is totally ramified in $E$. 
Since $(m,e(Q'|Q))=1$ for some rational place $Q'$ of $F'$    
                       lying over $Q$, it follows from Abhyankar's Lemma \cite{stic2} that $Q'$ 
is totally ramified in $E'=EF'$. Thus, (i) follows.\\
              (ii) The proof is clear by Kummer's Theorem \cite{stic2}, and the properties (a) and (b). \\
              (iii) See \cite[Proposition 3.9.6(a)]{stic2}.
\end{proof}

We note that a general version of Proposition \ref{construction} is given in \cite{hesttu}. 
\begin{remarque} \label{rem.const} In Proposition \ref{construction},  
the elements in the set $N$ do not need to be distinct if the following holds: 
for all $P\in S$ and  each  $i\in N$, the polynomials $p_i(T)\in \mathcal{O}_P[T]$ 
are pairwise distinct  and irreducible over $k(P)$ with $\deg p(T)=i$.
\end{remarque}

\medskip
   
\noindent {\bf a) Example 1}:

\begin{proposition}\label{quadratic1} 
Consider the  tower ${\mathcal F}/{\mathbb F}_q=(F_k/{\mathbb F}_q)_{k\geq 0}$ 
given in Proposition \ref{param2} with $q=2$ and the rational function field $F_0={\mathbb F}_q(x_0)$. 
Let $E=F_0(z)$ where $z$ is a root of the polynomial 
\[\varphi(T)=T^4+T+x_0^7+1 \in F_0[T].\]
Then the sequence ${\mathcal E}/{\mathbb F}_2=(E_k/{\mathbb F}_2)_{k\geq 0}$, with $E_k=EF_k$, is a composite 
quadratic tower with the following properties:
\begin{itemize}
\item [(i)] $B_3(E_k)\geq 2^{k+2}$ and $B_6(E_k)\geq 2^{k+1}$ for all $k\geq 0$,
\item[(ii)] $2^{k+3}-15\leq g(E_k)\leq 7.2^{k+2}-3$ for all $k\geq 0$,
\item[(iii)] $\beta_3({\mathcal E})\geq \frac{1}{7}$ and $\beta_6({\mathcal E})\geq \frac{1}{14}$.
\end{itemize}
\end{proposition}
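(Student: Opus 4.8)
The plan is to invoke Proposition \ref{construction} with the base tower $\mathcal F/\mathbb F_2$ of Proposition \ref{param2} (the $q=2$ Bassa--Garcia--Stichtenoth descent tower), the set $N=\{3,6\}$ so that $m=\sum_{i\in N}i=9$, and the polynomial $\varphi(T)=T^4+T+x_0^7+1$. First I would identify the ingredients required by Proposition \ref{construction}: the finite set of places $S$ is chosen among the places of $F_0={\mathbb F}_2(x_0)$ that split completely in all $F_k$ (by Proposition \ref{param2}(i) and the fact that every degree-$3$ place of $F_0$ lies in $\operatorname{Split}(\mathcal F)$, these are the degree-$3$ places of $F_0$), and the ramified place $Q$ is taken to be the pole $(x_0=\infty)$, so that $\alpha = x_0^7+1$ has $v_Q(\alpha)=-7$ with $\gcd(7,9)=1$, verifying hypothesis (c). Since $\varphi(T)=\prod_{p\mid 9}$ does not literally factor as $p_3(T)p_6(T)$, I would instead appeal to Remark \ref{rem.const}: one checks that over the residue field $k(P)={\mathbb F}_{2^3}$ of a degree-$3$ place $P\in S$, the quartic $T^4+T+c$ (with $c=\alpha(P)=0$ plus the constant, i.e. the reduction) factors appropriately, or more directly one decomposes the degree-$9$ cover $E/F_0$ into the pieces of degrees $3$ and $6$ predicted by Kummer's theorem; this gives, for each $P\in S$, one place of degree $3\deg P$ and one of degree $6\deg P$ above it — but here $\deg P=3$ would give degrees $9$ and $18$, so in fact the correct reading is that $S$ should be taken as a set of \emph{rational} places split in $\mathcal F$, and then item (ii) of Proposition \ref{construction} yields extensions of degrees $3$ and $6$ directly. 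Sorting out exactly which base places to use so that the output degrees are $3$ and $6$ is the first genuine point of care.

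Second I would prove (i). Having fixed $S$ to be an appropriate set of degree-one places split completely in $\mathcal F$ (their number in $F_k$ grows like a constant times $2^k$ by Proposition \ref{param2}(i), since $B_3(F_k)\geq \frac{(q^3-q^2-q+1)q^{k+1}}{3}$ specializes at $q=2$ to $B_3(F_k)\geq 2^{k+1}$ and each contributes $3$ rational places after restriction; more carefully one uses $\operatorname{Split}(\mathcal H)\supseteq\{(x_0=\alpha)\mid\alpha\in{\mathbb F}_8\setminus{\mathbb F}_2\}$, giving $6$ rational places of $F_0$ split in $\mathcal F$, hence $\geq 6\cdot 2^{k}$ such places in $F_k$ after accounting for $[F_k:F_0]$), each such place acquires exactly one extension of degree $3$ and one of degree $6$ in $E_k=EF_k$ by Proposition \ref{construction}(ii) combined with (iii) (complete splitting in the constant-type extension is not the issue here; rather $E_k/F_k$ is the relevant extension and the places split-completely-in-$\mathcal F$ behaviour transfers along the composite). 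Bookkeeping the multiplicities — $[E_k:F_k]=9$, the number of base places, and the fact that each yields places of degrees $3$ and $6$ — should produce $B_3(E_k)\geq 2^{k+2}$ and $B_6(E_k)\geq 2^{k+1}$; I expect the constants $2^{k+2}$ and $2^{k+1}$ to come out after substituting $q=2$ and keeping only the guaranteed contributions.

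Third I would prove (ii), the genus estimate, which is the main obstacle. The upper bound $g(E_k)\leq 7\cdot 2^{k+2}-3$ and lower bound $g(E_k)\geq 2^{k+3}-15$ should both follow from Lemma \ref{lemGenus} (the Garcia--Stichtenoth-type genus formula for composite towers) applied to $\mathcal E/\mathbb F_2=(E_k)$ over the base $\mathcal F/\mathbb F_2$ with $E=E_0$. This requires: computing $g(E)=g(E_0)$ via the Hurwitz genus formula for the degree-$9$ extension $E/F_0$ defined by $\varphi$, which reduces to locating and weighting the ramified places (only $(x_0=\infty)$, totally ramified with different exponent governed by the Artin--Schreier-like/Eisenstein structure of $T^4+T+x_0^7+1$, plus possibly the zeros of the derivative); identifying $R(\mathcal F)$ for the Bassa--Garcia--Stichtenoth tower (from \cite{bagast}, a finite set of places of $F_0$, all tame in $E$ after checking that $9$ is coprime to the relevant ramification indices — here $\gcd(9,2)=1$ handles wild ramification and one checks the tame indices); and then plugging the resulting $s$ and $r(k)$ into Lemma \ref{lemGenus} together with the known bounds $\frac12(q^4+q^3-6q^2+4q)q^{k-1}-q^3+q^2+1\leq g(F_k)\leq\frac{(q^2+q-2)q^{k+1}}{2}$ from Proposition \ref{param2}(ii), specialized to $q=2$. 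The delicate part is that $g(F_k)$ is only known up to an interval, so the interval for $g(E_k)$ inherits both a lower and an upper estimate; I would carry the two bounds through Lemma \ref{lemGenus} separately and simplify at $q=2$ to land on $2^{k+3}-15$ and $7\cdot 2^{k+2}-3$. Finally, (iii) is immediate: $\beta_3(\mathcal E)=\lim B_3(E_k)/g(E_k)\geq \lim (2^{k+2})/(7\cdot 2^{k+2}-3)=\tfrac17$ and likewise $\beta_6(\mathcal E)\geq\lim(2^{k+1})/(7\cdot 2^{k+2}-3)=\tfrac1{14}$, using the lower bounds from (i) against the upper bound from (ii).
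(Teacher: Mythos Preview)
Your proposal contains a fundamental misreading of the extension $E/F_0$. The polynomial $\varphi(T)=T^4+T+x_0^7+1$ has degree $4$ in $T$, so $[E:F_0]=4$, not $9$; consequently $N\neq\{3,6\}$ and $m\neq 9$. The paper's key observation is the factorization over $\mathbb{F}_2$:
\[
\varphi(T)=T(T+1)(T^2+T+1)+(x_0+1)(x_0^3+x_0^2+1)(x_0^3+x_0+1).
\]
Thus one applies Proposition \ref{construction} together with Remark \ref{rem.const} with $N=\{1,1,2\}$ (so $m=4$), $Q=P_\infty$, and $S=\{Q_1,Q_2\}$ where $Q_1=(x_0^3+x_0^2+1=0)$ and $Q_2=(x_0^3+x_0+1=0)$ are the two \emph{degree-$3$} places of $F_0$; these split completely in all $F_k$ by the proof of Proposition \ref{param2}. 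Over the residue field of such a $P\in S$ one has $\alpha(P)=0$ and $\varphi(T)\equiv T(T+1)(T^2+T+1)$, so by Proposition \ref{construction}(ii) each $P$ acquires extensions of degrees $1\cdot 3,\ 1\cdot 3,\ 2\cdot 3$, i.e.\ two places of degree $3$ and one of degree $6$. This yields $n_3\geq 4$, $n_6\geq 2$ in $E$, and hence $B_3(E_k)\geq 4\cdot 2^k=2^{k+2}$ and $B_6(E_k)\geq 2\cdot 2^k=2^{k+1}$ since $[E_k:E]=[F_k:F_0]=2^k$. Your attempt to realise degrees $3$ and $6$ by taking rational base places and $N=\{3,6\}$ cannot work here: there are not enough rational places in $\operatorname{Split}(\mathcal F)$ over $\mathbb{F}_2$, and in any case the defining polynomial has the wrong degree.

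For (ii) your plan to use Lemma \ref{lemGenus} is also off-track. That lemma requires all places of $R(\mathcal F)$ to be \emph{tame} in $E$, but $E/F_0$ is a degree-$4$ extension in characteristic $2$ and the ramification at $P_\infty$ is wild. The paper instead passes to the constant field extension $E'=E\mathbb{F}_4$ and observes that $E'/F'_0$ is an elementary abelian $p$-extension of degree $4$ (Artin--Schreier type), ramified only above $P_\infty$ with $e=4$, $d=24$; this gives $g(E)=g(E')=9$ directly. The upper bound on $g(E_k)$ then comes from a Castelnuovo-type inequality (Lemma 3.11 of \cite{hesttu}), namely $g(E_k)\leq 4g(F_k)+12\cdot 2^k-3$, and the lower bound from the crude Hurwitz inequality $g(E_k)\geq 4g(F_k)-3$; plugging in the bounds on $g(F_k)$ from Proposition \ref{param2}(ii) with $q=2$ gives the stated interval. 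Part (iii) then follows from (i) and (ii) as you say.
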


\begin{proof}  We first  need to show that ${\mathcal E}/{\mathbb F}_2$ is a quadratic tower.
We have that 
\[\varphi(T)=T(T+1)(T^2+T+1)+(x_0+1)(x_0^3+x_0^2+1)(x_0^3+x_0+1) \in F_0[T].\]
Let $k\geq 1$. We apply Proposition \ref{construction} and Remark \ref{rem.const}  
with $F,F'=F_k, N=\{1,1,2\}, Q=P_\infty=(x_0=\infty)$, and the set 
$S=\{Q_1,Q_2\} \in \mathbb{P}(F_0)$ with $Q_1:=(x_0^3+x_0^2+1=0)$ and  $Q_2:=(x_0^3+x_0+1=0)$. 
It is shown in the proof of Proposition \ref{param2}(i) that the places $Q_1$ and $Q_2$ split 
completely in $E_k$ for all $k\geq 0$. Hence, Proposition \ref{construction}(ii) and (iii) hold, 
and  $[E:F_0]=\deg \varphi(T)=4$. 

We now claim that $E/F_0$ and $F_k/F_0$ are linearly disjoint and ${\mathbb F}_2$ is algebraically 
closed in $E_k=EF_k$. Note that $[F_{k+1}:F_k]=2$ for all $k\geq 0$.
By \cite[Lemma 5.2]{bagast}, the place $P=(x_0=1)$ of $F_0$  is totally ramified  in $F_k$. 
By Kummer's Theorem \cite{stic2} $P$ is unramified in $E$ and it has an extension, say $P'$, of degree one. 
Let $Q'\in \mathbb{P}(E_k)$ lying above $P'$. By Abhyankar's Lemma $e(Q'|P')=2^k=[F_k:F_0]\geq [E_k:E]$. 
Since $e(Q'|P')\leq [E_k:E]$, we have the equality, and so $P'$ is totally ramified in $E_k$. 
Now the claim follows. Hence, ${\mathcal E}/{\mathbb F}_2$ is a tower and since ${\mathcal F}/{\mathbb F}_2$ is quadratic, it is also quadratic. \\
(i)  Let $n_i=\# \{R\in \mathbb{P}(E): R|Q_1 \textrm{ or } R|Q_2 \textrm{ with } \deg R=i\}$ 
for $i\geq 1$. By using Proposition \ref{construction}(ii),(iii), we obtain that
\[B_3(E_k)\geq n_3[E_k:E]\geq 4[E_k:E]=2^{k+2} \quad \textrm{ and } \]
\[B_6(E_k)\geq n_6[E_k:E]\geq 2[E_k:E]=2^{k+1}.\qquad\quad\] 
(ii)  Since the genera do not 
change after taking the constant field extensions, we can consider 
${\mathcal E}'/{\mathbb F}_4=(E'_k/{\mathbb F}_4)_{k\geq 0}$, with $E'_k:=E_k{\mathbb F}_4$, and ${\mathcal F}'/{\mathbb F}_4 =(F'_k/{\mathbb F}_4)_{k\geq 0}$, 
with $F'_k:= F_k{\mathbb F}_4$. 
By \cite[Proposition 3.7.10]{stic2}, $E'/F'_0$ is an elementary Abelian extension of degree $4$. 
Moreover, the only ramified place in $E'/F'_0$ is the pole of $x_0$, say $P_\infty$, with 
\begin{equation}\label{abelian.ext}
e(Q|P_\infty)=4 \quad \textrm{ and }\quad  d(Q|P_\infty)=(4-1)(-v_{P_\infty}(x_0^7+1)+1)=24
\end{equation}
where  $Q\in \mathbb{P}(E')$ lies above $P_\infty$.
Hence, by the Hurwitz  Genus Formula \cite{stic2} 
\begin{equation}\label{genusE}
g(E')=9.
\end{equation}
By \cite[Lemma 3.11]{hesttu}, we have that 
\begin{eqnarray}\label{upper}
         g(E_k)&\leq &[E:F_0]g(F_k)+[F_k:F_0]g(E)-[E:F_0][F_k:F_0]g(F_0)\\
               &+&([E:F_0]-1)([F_k:F_0]-1)\\
               &=&4g(F_k)+12\cdot 2^k-3\quad \textrm{ by (\ref{genusE}) as $g(E)=g(E')$.}
\end{eqnarray}
Moreover, one can easily conclude  from the Hurwitz Genus Formula \cite{stic} for the extension $E_k/F_k$ that 
\begin{equation}\label{low}
g(E_k)\geq 4g(F_k)-3.
\end{equation}
By applying Proposition \ref{param2} (ii) with $q=2$, we obtain that
\begin{equation}\label{genusFk}
2^{k+1}-3\leq g(F_k)\leq 2^{k+2}.
\end{equation}
Now combining (\ref{genusE}), (\ref{upper}), (\ref{low}) and (\ref{genusFk}) yields the desired bounds in (ii). \\
(iii) follows from (i) and (ii).
\end{proof}

\begin{corollaire} \label{coroquadratic1}
Let ${\mathcal E}/{\mathbb F}_2$ be the tower given in Proposition \ref{quadratic1} 
and  ${\mathcal E}'/{\mathbb F}_8=(E'_k/{\mathbb F}_8)_{k\geq 0}$ with $E'_k=E_k{\mathbb F}_8$. Then ${\mathcal E}'/{\mathbb F}_8$ is a 
quadratic tower with the following properties:
\begin{itemize}
\item[(i)] $B_1(E'_k)\geq 3\cdot 2^{k+2}$ and $B_2(E'_k)\geq 3\cdot 2^{k+1}$ for all $k\geq 0$. 
\item[(ii)] $\beta_1({\mathcal E}')\geq \frac{3}{7}$ and $\beta_2({\mathcal E}')\geq \frac{3}{14}$. 
\end{itemize}
\end{corollaire}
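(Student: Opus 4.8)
The plan is to deduce everything from Proposition~\ref{quadratic1} together with the standard behaviour of places and genus under the constant field extension ${\mathbb F}_8/{\mathbb F}_2$ of degree $3$.

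First I would check that $\mathcal{E}'/{\mathbb F}_8$ is indeed a quadratic tower. Since $E'_k=E_k{\mathbb F}_8$ is a constant field extension, ${\mathbb F}_8$ is algebraically closed in $E'_k$ and $g(E'_k)=g(E_k)$ (see \cite[Theorem~3.6.3]{stic2}); in particular $g(E'_k)\to\infty$ because $g(E_k)\to\infty$ by Proposition~\ref{quadratic1}(ii). Moreover $E_{k+1}$ and $E_k{\mathbb F}_8$ are linearly disjoint over $E_k$ (the constant field of $E_{k+1}$ is ${\mathbb F}_2$), so $[E'_{k+1}:E'_k]=[E_{k+1}:E_k]=2$ and each $E'_{k+1}/E'_k$ is finite separable with $E'_k\subsetneq E'_{k+1}$. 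Hence $\mathcal{E}'/{\mathbb F}_8$ is a quadratic tower.

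Next, for (i), I would invoke the decomposition of places in a constant field extension of degree $3$: a place of $E_k$ of degree $d$ splits in $E'_k$ into exactly $\gcd(d,3)$ places, each of degree $d/\gcd(d,3)$ (cf. \cite[Theorem~3.6.3]{stic2}). Applying this with $d=3$, every degree-$3$ place of $E_k$ produces $3$ distinct rational places of $E'_k$, and places lying over distinct places of $E_k$ are distinct; since $B_3(E_k)\geq 2^{k+2}$ by Proposition~\ref{quadratic1}(i), we obtain $B_1(E'_k)\geq 3B_3(E_k)\geq 3\cdot 2^{k+2}$. Likewise, with $d=6$ one has $\gcd(6,3)=3$ places of degree $6/3=2$ above each degree-$6$ place, whence $B_2(E'_k)\geq 3B_6(E_k)\geq 3\cdot 2^{k+1}$. (Rational or quadratic places of $E_k$, if any, would only add to these counts, so the stated lower bounds already suffice.)

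Finally, for (ii), I would combine these lower bounds with the genus estimate $g(E'_k)=g(E_k)\leq 7\cdot 2^{k+2}-3$ from Proposition~\ref{quadratic1}(ii):
\[
\beta_1(\mathcal{E}')=\liminf_{k\to\infty}\frac{B_1(E'_k)}{g(E'_k)}\geq\liminf_{k\to\infty}\frac{3\cdot 2^{k+2}}{7\cdot 2^{k+2}-3}=\frac{3}{7},
\]
and in the same way $\beta_2(\mathcal{E}')\geq\liminf_{k\to\infty}\dfrac{3\cdot 2^{k+1}}{7\cdot 2^{k+2}-3}=\dfrac{3}{14}$. Since each step is a direct appeal to Proposition~\ref{quadratic1} and to the elementary theory of constant field extensions, there is no genuine obstacle; the only point deserving care is to use the correct splitting type $\gcd(d,3)$ and to track that degrees $3$ and $6$ of $E_k$ yield degrees $1$ and $2$ of $E'_k$, respectively.
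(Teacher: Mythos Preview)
Your proof is correct and follows essentially the same approach as the paper, which simply cites \cite[Proposition~5.1.9]{stic2} and Proposition~\ref{quadratic1}; you have merely unpacked the constant field extension argument explicitly (citing the equivalent \cite[Theorem~3.6.3]{stic2} for the splitting behaviour of places) and carried out the limit computation for the invariants.
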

\begin{proof} The proof follows from \cite[Proposition 5.1.9]{stic2} and Proposition \ref{quadratic1}.
\end{proof}

\begin{proposition}\label{hparam4}
The steps of the towers 
$\mathcal{E}/{\mathbb F}_{2}=(E_k/{\mathbb F}_{2})_{k\geq 0}$ and $\mathcal{E}'/{\mathbb F}_{8}=(E^{'}_k/{\mathbb F}_{8})_{k\geq 0}$ 
respectively defined in Proposition \ref{quadratic1} and Corollary \ref{coroquadratic1} have a class number such that
$$h(E_k/{\mathbb F}_{2})\geq h_{BRT1,k}$$ 
and
$$h(E^{'}_k/{\mathbb F}_{8})\geq h_{BRT2,k}$$ 
where
\begin{equation*}
\begin{split}
h_{BRT1,k}= 
\frac{1}{3(8.2^k-14)} 
\bigg(
\prod_{i\in \{3,6\}} 
 \left(\begin{array}{c}
 B_{i,k}+m_i\\
B_{i,k}
\end{array}\right)
 \\
 + 
2^{(8.2^k-16)} \prod_{i\in \{3,6\} } \left [ \left( \frac{2^{i}}{2^{i}-1}\right)^{B_{i,k}}
-  \right. \\ \left. B_{i,k}\left(
\begin{array}{c}
 B_{i,k}+m_i\\
 B_{i,k}
\end{array}
\right) \int_{0}^{\frac{1}{2^i}} \frac{(\frac{1}{2^i}-t)^{m_i}}{(1-t)^{B_{i,k}+m_i+1}}\rm dt. \right ]
\biggr)
\end{split}
\end{equation*}

and

\begin{equation*}
\begin{split}
h_{BRT2,k}= 
\frac{49}{9(8.2^k-14)-B_{1,k}} 
\bigg(
\prod_{i\in \{1,2\}} 
 \left(\begin{array}{c}
 B_{i,k}+m_i\\
B_{i,k}
\end{array}\right)
 \\
 + 
2^{(8.2^k-16)} \prod_{i\in \{1,2\} } \left [ \left( \frac{8^{i}}{8^{i}-1}\right)^{B_{i,k}}
-  \right. \\ \left. B_{i,k}\left(
\begin{array}{c}
 B_{i,k}+m_i\\
 B_{i,k}
\end{array}
\right) \int_{0}^{\frac{1}{8^i}} \frac{(\frac{1}{8^i}-t)^{m_i}}{(1-t)^{B_{i,k}+m_i+1}}\rm dt. \right ]
\biggr)
\end{split}
\end{equation*}

 with $$B_{3,k}= 2^{k+2} \mbox{, } B_{6,k}= 2^{k+1}\mbox{, }B_{1,k}=3.2^{k+2}\mbox{, } B_{2,k}=3.2^{k+1}$$ and 
 $$3m_3+6m_6 \leq 8.2^k-17$$ and $$m_1+2m_2 \leq 8.2^k-17$$ for all $k\geq 1$.
\end{proposition}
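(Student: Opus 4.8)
The plan is to apply Theorem \ref{mainmain} twice, once to each step $E_k/{\mathbb F}_2$ of the tower ${\mathcal E}/{\mathbb F}_2$ and once to each step $E'_k/{\mathbb F}_8$ of the tower ${\mathcal E}'/{\mathbb F}_8$, choosing in each case the set $D_1=D_2$ to consist of exactly the degrees for which we have explicit lower bounds on the number of places, and specializing $l_r=m_r$. For the first tower, the relevant degrees are $r\in\{3,6\}$, and Proposition \ref{quadratic1}(i) supplies $B_3(E_k)\geq 2^{k+2}$ and $B_6(E_k)\geq 2^{k+1}$; for the second tower, the relevant degrees are $r\in\{1,2\}$, and Corollary \ref{coroquadratic1}(i) supplies $B_1(E'_k)\geq 3\cdot 2^{k+2}$ and $B_2(E'_k)\geq 3\cdot 2^{k+1}$. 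In both cases Proposition \ref{quadratic1}(ii) gives the genus bound $2^{k+3}-15\leq g(E_k)\leq 7\cdot 2^{k+2}-3$, i.e. $g(E_k)\geq 8\cdot 2^k-15$, and since $g(E'_k)=g(E_k)$ by the invariance of the genus under constant field extension (\cite[Theorem 3.6.3(b)]{stic2}), the same lower bound holds for $E'_k$.

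The key point that makes these substitutions legitimate is the monotonicity observation already used in the proof of Proposition \ref{propohgastsur4et2}: the bound $h_{BRT}$ of Theorem \ref{mainmain} is, for each fixed choice of $D_1$, $D_2$, $l$, $m$, a nondecreasing function of each $B_r$ and a nondecreasing function of the genus $g$ (the first through the combinatorial factors $\binom{B_r+l_r}{l_r}$ and the expression for $Q_{r,m_r}$, the second through the exponent $g-1$ on $q$ and through the binomial and integral terms, whose net contribution is the truncated series $Q_{r,m_r}$, itself increasing in $g$ when the truncation index grows). Hence replacing $B_3(E_k)$, $B_6(E_k)$, $B_1(E'_k)$, $B_2(E'_k)$, and the genus by the explicit lower bounds $B_{3,k}=2^{k+2}$, $B_{6,k}=2^{k+1}$, $B_{1,k}=3\cdot 2^{k+2}$, $B_{2,k}=3\cdot 2^{k+1}$, and $g_k=8\cdot 2^k-15$ only decreases $h_{BRT}$, so the resulting quantities $h_{BRT1,k}$ and $h_{BRT2,k}$ remain valid lower bounds for $h(E_k/{\mathbb F}_2)$ and $h(E'_k/{\mathbb F}_8)$ respectively. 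It then only remains to substitute these values into formula (\ref{HBRT}): for the first tower $q=2$, so $(q-1)^2=1$, $(g_k+1)(q+1)-B_1=3(g_k+1)-0=3(8\cdot 2^k-14)$ (there being no known rational place contributing, one takes $B_1=0$ in that denominator), $q^{g_k-1}=2^{8\cdot 2^k-16}$, and $\frac{q^r}{q^r-1}=\frac{2^r}{2^r-1}$; for the second tower $q=8$, so $(q-1)^2=49$, $(g_k+1)(q+1)-B_{1,k}=9(8\cdot 2^k-14)-3\cdot 2^{k+2}$, $q^{g_k-1}=2^{3(8\cdot 2^k-16)}=2^{8\cdot 2^k-16}$ after absorbing the factor $3$... one checks $8^{g_k-1}=2^{3(g_k-1)}$; and $\frac{q^i}{q^i-1}=\frac{8^i}{8^i-1}$. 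The admissibility conditions $3m_3+6m_6\leq g_k-2=8\cdot 2^k-17$ and $m_1+2m_2\leq 8\cdot 2^k-17$ are exactly hypotheses (5)–(6) of Theorem \ref{mainmain} with $D_1=D_2=\{3,6\}$ (resp. $\{1,2\}$) and $l_r=m_r$.

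The main obstacle, such as it is, is purely bookkeeping: verifying that the arithmetic simplifications of the prefactor and the power of $q$ match the stated formulas exactly — in particular tracking the genus lower bound $g_k=8\cdot 2^k-15$ through $g_k-1=8\cdot 2^k-16$ and $g_k-2=8\cdot 2^k-17$, and confirming that for ${\mathcal E}'/{\mathbb F}_8$ the base field has $q=8$ so that the binomial/integral terms involve $q^{2i}$-type denominators written as $8^i$ and not $2^i$. One should also note explicitly that we are free to omit the degree-$1$ contribution in the first tower (we have no lower bound $B_1(E_k)\geq 1$ better than $0$ for every $k$, only $B_1(E_k)\geq 1$ for $k\geq 1$ from a linearly-disjointness argument, but it is harmless and conservative to set $B_1=0$ in the denominator), so the proof is complete once these substitutions are carried out.

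\begin{proof}
Apply Theorem \ref{mainmain} to $E_k/{\mathbb F}_2$ with $D_1=D_2=\{3,6\}$ and $l_i=m_i$ for $i\in\{3,6\}$. By Proposition \ref{quadratic1}(i) we have $B_3(E_k)\geq 2^{k+2}=B_{3,k}$ and $B_6(E_k)\geq 2^{k+1}=B_{6,k}$, and by Proposition \ref{quadratic1}(ii) we have $g(E_k)\geq 2^{k+3}-15=8\cdot 2^k-15=:g_k$. As noted in the proof of Proposition \ref{propohgastsur4et2}, for fixed $D_1,D_2,l,m$ the bound $h_{BRT}$ of (\ref{HBRT}) is a nondecreasing function of each $B_r$ and of $g$; hence replacing $B_3(E_k),B_6(E_k),g(E_k)$ by the explicit lower bounds $B_{3,k},B_{6,k},g_k$ can only decrease $h_{BRT}$, and $h(E_k/{\mathbb F}_2)\geq h_{BRT1,k}$ follows after substituting $q=2$, $(q-1)^2=1$, $q^{g_k-1}=2^{8\cdot 2^k-16}$, and $(g_k+1)(q+1)-B_1=3(g_k+1)=3(8\cdot 2^k-14)$ into (\ref{HBRT}). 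The admissibility condition (6) of Theorem \ref{mainmain} reads $3m_3+6m_6\leq g_k-2=8\cdot 2^k-17$, which is the stated constraint.

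Likewise, apply Theorem \ref{mainmain} to $E'_k/{\mathbb F}_8$ with $D_1=D_2=\{1,2\}$ and $l_i=m_i$ for $i\in\{1,2\}$. By Corollary \ref{coroquadratic1}(i) we have $B_1(E'_k)\geq 3\cdot 2^{k+2}=B_{1,k}$ and $B_2(E'_k)\geq 3\cdot 2^{k+1}=B_{2,k}$, and by the invariance of the genus under constant field extension \cite[Theorem 3.6.3(b)]{stic2} together with Proposition \ref{quadratic1}(ii) we have $g(E'_k)=g(E_k)\geq 8\cdot 2^k-15=g_k$. The same monotonicity of (\ref{HBRT}) lets us replace $B_1(E'_k),B_2(E'_k),g(E'_k)$ by $B_{1,k},B_{2,k},g_k$, and substituting $q=8$, $(q-1)^2=49$, $q^{g_k-1}=8^{g_k-1}=2^{3(8\cdot 2^k-16)}$, $(g_k+1)(q+1)-B_1=9(8\cdot 2^k-14)-B_{1,k}$, and $\frac{q^i}{q^i-1}=\frac{8^i}{8^i-1}$ into (\ref{HBRT}) yields $h(E'_k/{\mathbb F}_8)\geq h_{BRT2,k}$. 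Here the admissibility condition (6) is $m_1+2m_2\leq g_k-2=8\cdot 2^k-17$, as stated.
\end{proof}
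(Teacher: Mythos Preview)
Your proof is correct and follows essentially the same approach as the paper's own proof: apply Theorem~\ref{mainmain} with $D_1=D_2=\{3,6\}$ (resp.\ $\{1,2\}$) and $l_i=m_i$, then replace the place numbers and the genus by the explicit lower bounds from Proposition~\ref{quadratic1} and Corollary~\ref{coroquadratic1}. You are in fact more careful than the paper, which simply writes ``follows directly from Theorem~\ref{mainmain}'' without mentioning the genus substitution; your explicit appeal to monotonicity of $h_{BRT}$ in $g$ mirrors what the paper does in the proof of the analogous Proposition~\ref{hparam2}. One cosmetic remark: in your informal discussion the line ``$q^{g_k-1}=2^{3(8\cdot 2^k-16)}=2^{8\cdot 2^k-16}$ after absorbing the factor~$3$'' is garbled---the correct value for $q=8$ is $8^{g_k-1}=2^{3(g_k-1)}$, and the exponent $8\cdot 2^k-16$ in the stated $h_{BRT2,k}$ is a typo in the paper (the resulting bound is weaker than the one you actually derive, so the inequality remains valid).
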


\begin{proof}

The bound $h_{BRT1,k}$ follows directly from Theorem \ref{mainmain} by taking 
$l_i=m_i$ for $i=3,6$ and the lower bounds for the number of places of each degree given 
by Proposition \ref{quadratic1}.  The bound $h_{BRT2,k}$ follows directly from Theorem \ref{mainmain} by taking 
$l_i=m_i$ for $i=1,2$ and the lower bounds for the number of places of each degree given 
by Corollary \ref{coroquadratic1}.
\end{proof}

\medskip

{\bf Numerical estimations:} 

\medskip

Here are some effective numerical estimations of  the class numbers of some steps $E_k/{\mathbb F}_{2}$ and  $E^{'}_k/{\mathbb F}_{8}$ of 
respectively the towers $\mathcal{E}/{\mathbb F}_{2}$ and $\mathcal{E}'/{\mathbb F}_{8}$ given by Proposition \ref{quadratic1}. 
These estimations are given by using Proposition \ref{hparam4}.
 
\medskip

\begin{center}
\begin{tabular}{|l|c|c|c|c|c|c|c|c|r|}
\hline
q  & step k & $g_k$ &  $B_{3,k}$  & $B_{6,k}$ & $m_3$ & $m_6$ & $h_{BRT1,k}$   \\
\hline
2 &  2  &    17 &     16  &  8  &   5&  0 &    10 254\\
 \hline
2 &    3   &   49   &   32  &   16  &  11 &  2 &    1.71832288189 $\times 10^{14}$ \\
 \hline     
  \end{tabular}             
  \end{center}  

\medskip         

\begin{center}
\begin{tabular}{|l|c|c|c|c|c|c|c|c|r|}
\hline
$q^3$  & step k & $g_k$ &  $B_{1,k}$  & $B_{2,k}$ & $m_1$ & $m_2$ & $h_{BRT2,k}$   \\
\hline
8 &  2  &    17 &     48 &  24  &   11&   2& 1.0021797677 $\times 10^{17}$   \\
 \hline
8 &    3   &   49   &   96  &   48  &  33 &   7& 2.42674161125  $\times 10^{48}$  \\
 \hline
 \end{tabular}      
\end{center}       

\medskip
   
\noindent {\bf b) Example 2}: 

\begin{proposition} \label{param1bis}
Let $N\subset {\mathbb N}$ be a finite set and set $m=\sum_{i\in N}i$. 
Let $l$ be a prime power, $q=l^r$ with $r\geq 2$, and  $d=\frac{q-1}{l-1}$.  Suppose that $(m,dq)=1$. 
Further let ${\mathcal F}/{\mathbb F}_q=(F_k/{\mathbb F}_q)_{k\geq 0}$, with the rational function field  $F_0={\mathbb F}_q(x_0)$, 
be the tower which is recursively  defined by the polynomial 
 \begin{equation}\label{wulftange.eq1}
 f(X,Y)=Y^d-a(X+b)^d+c \in {\mathbb F}_q[X,Y] 
 \end{equation} 
 with  $a,c\in {\mathbb F}_l, b\in {\mathbb F}^*_q$ and  $ab^d+c=0$.  
Let $E=F_0(z)$ with $z$ a root of the polynomial 
\begin{equation}\label{definingE}
\varphi(T)=\prod_{i\in N}p_i(T)-\frac{1}{x_0} \in F_0[T],
\end{equation}
where each $p_i(T)\in {\mathbb F}_q[T]$ is an irreducible monic polynomial of $\deg p_i(T)=i$.
Suppose  that $(\varphi(T),\varphi'(T))=1$ at the places $P\in R({\mathcal F})\setminus \{P_0\}$ and $\varphi(T)$ is separable.
Then the sequence ${\mathcal E}/\mathbb{F}_{q}=(E_k/{\mathbb F}_{q})_{k\geq 0}$, with  $E_k:=EF_k$, is a composite tower  such that
\begin{itemize}
 \item[(i)] $B_1(E_k)\geq 1$ and $B_i(E_k)\geq d^k$ for all $i\in N$ and  $k\geq 0$,
 \item[(ii)] for all $k\geq 1$
 \[\bigg(\frac{mq-m-2}{4}\bigg)d^k+\frac{1}{2} \leq g(E_k)\leq \bigg(\frac{mq-m-1}{2}\bigg)d^k+\frac{(1-q)m+1}{2}.\]

\item[(iii)]  $\beta_i({\mathcal E})\geq\frac{2}{mq-m-1}$ for all $i\in N$.
\end{itemize}
\end{proposition}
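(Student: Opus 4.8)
The plan is to realise ${\mathcal E}/{\mathbb F}_q$ as the compositum of the tame tower ${\mathcal F}/{\mathbb F}_q$ with the fixed extension $E/F_0$ via Proposition \ref{construction}, and then to read off the three assertions from the known arithmetic of ${\mathcal F}/{\mathbb F}_q$ and the behaviour of $E/F_0$. First I would verify the hypotheses of Proposition \ref{construction} with $F=F_0$, $F'=F_k$, base place $Q=P_0=(x_0=0)$, and $S$ the set of rational places of $F_0$ lying in $\operatorname{Split}({\mathcal F})$ (which contains $(x_0=\infty)$ and on which $\alpha=1/x_0$ vanishes). Here $v_{P_0}(\alpha)=-1$, so $(v_{P_0}(\alpha),m)=1$, and the condition $(m,e(Q'|P_0))=1$ for a rational extension $Q'$ of $P_0$ in $F_k$ follows from $(m,dq)=1$ since $e(Q'|P_0)$ is $1$ or a power of $d$. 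Proposition \ref{construction} then yields $[E_k:F_k]=m=[E:F_0]$ with ${\mathbb F}_q$ algebraically closed in $E_k$; since $E_k=EF_k$ this forces $E/F_0$ and $F_k/F_0$ to be linearly disjoint, so $[E_k:E]=[F_k:F_0]=d^k$ and each $E_{k+1}/E_k$ is finite separable. With $g(E_k)\to\infty$ from part (ii), this shows ${\mathcal E}/{\mathbb F}_q$ is a composite tower. I would also note at this point that $x_0=1/\prod_{i\in N}p_i(z)$, so $E=\mathbb{F}_q(z)$ is rational and $g(E)=0$; this is used below.

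For part (i): $B_1(E_k)\ge 1$ because the rational place of $E$ over $P_0$ furnished by Proposition \ref{construction}(i) stays of degree one in $E_k$ --- it is either totally ramified in $E_k/E$ (if $P_0$ ramifies in ${\mathcal F}$) or completely split there (if $P_0\in\operatorname{Split}({\mathcal F})$), by Abhyankar's Lemma and $(m,d)=1$. For $B_i(E_k)\ge d^k$ with $i\in N$: take any $P\in S$; by Proposition \ref{construction}(ii) it has an extension of degree $i$ in $E$, and since $P$ splits completely in $F_k$, Proposition \ref{construction}(iii) makes that extension split completely in $E_k/E$, producing $[E_k:E]=d^k$ places of degree $i$.

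For part (ii) I would compute $g(E_k)$ via the composite-tower genus formula of Lemma \ref{lemGenus}, applied to the composite of ${\mathcal F}$ with $E$; its hypotheses hold because $R({\mathcal F})$ is finite and each $P\in R({\mathcal F})$ is tame in $E$ --- at $P_0$ because $(m,q)=1$, and at $P\in R({\mathcal F})\setminus\{P_0\}$ because $(\varphi,\varphi')=1$ there forces $E/F_0$ to be unramified over $P$ (so Abhyankar's Lemma applies in $E_k$). Substituting $g(E)=g(F_0)=0$, $[E_k:E]=d^k$, the known genus of the tame tower ${\mathcal F}/{\mathbb F}_q$, and the values of $s$ and $r(k)$ read off from Abhyankar's Lemma yields the upper bound; the lower bound follows more cheaply from the Hurwitz genus formula for $E_k/F_k$, keeping only the tame totally ramified place over $P_0$, i.e. $\deg\operatorname{Diff}(E_k/F_k)\ge m-1$, together with the known lower genus bound for $F_k$. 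Part (iii) is then immediate: $\beta_i({\mathcal E})=\liminf_k B_i(E_k)/g(E_k)\ge \frac{2}{mq-m-1}$, using $B_i(E_k)\ge d^k$ and the upper bound $g(E_k)\le\frac{mq-m-1}{2}d^k+O(1)$ from part (ii).

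I expect the genus estimate in part (ii) to be the main obstacle: one must control the ramification of $E/F_0$ away from $P_0$ --- which is exactly what the separability of $\varphi$ and the condition $(\varphi,\varphi')=1$ on $R({\mathcal F})\setminus\{P_0\}$ ensure, since they prevent any new ramification over the ramification locus of ${\mathcal F}$ --- and feed the correct genus of ${\mathcal F}/{\mathbb F}_q$ (either quoted or recomputed by Riemann--Hurwitz, ${\mathcal F}$ being tame with ramification locus the zero set of $a(x_0+b)^d-c$) into Lemma \ref{lemGenus}. The factor-two gap between the stated upper and lower bounds for $g(E_k)$ reflects that the lower bound uses only the single totally ramified place above $P_0$ rather than the full different.
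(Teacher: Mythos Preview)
Your overall architecture---apply Proposition \ref{construction} with $S=\{P_\infty\}$, $Q=P_0$, then use Lemma \ref{lemGenus}---matches the paper's proof, and your arguments for (i) and (iii) are essentially correct. The difficulty is in (ii), where your proposed lower bound for $g(E_k)$ does not reach the stated bound.

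You propose to get the lower bound by Hurwitz for $E_k/F_k$ with $\deg\operatorname{Diff}(E_k/F_k)\ge m-1$, keeping only the contribution of the single place over $P_0$. But the places of $R({\mathcal F})\setminus\{P_0\}$ are \emph{unramified} in $E/F_0$ (that is exactly what $(\varphi,\varphi')=1$ there gives you), so the place over $P_0$ already \emph{is} the full contribution to the different coming from $R({\mathcal F})$; the remaining ramification of $E/F_0$ lies \emph{outside} $R({\mathcal F})$ and, since such places are unramified in $F_k$, propagates to roughly $d^k$ places of $F_k$. In fact Lemma \ref{lemGenus} (with $s=r(k)=m-1$) gives the exact relation $g(E_k)=m\,g(F_k)+\tfrac{m-1}{2}(d^k-1)$, so $\deg\operatorname{Diff}(E_k/F_k)=(m-1)(d^k+1)$, not $m-1$. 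With only $m-1$ you obtain $g(E_k)\ge \tfrac{m(q-3)}{4}d^k+\tfrac12$, which for $m>1$ is strictly weaker than the claimed $\tfrac{mq-m-2}{4}d^k+\tfrac12$.

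The paper instead runs Lemma \ref{lemGenus} once to get the exact expression
\[
g(E_k)=\Bigl(\tfrac{mq-m-1}{2}\Bigr)d^k-\tfrac{m}{2}a_k+\tfrac{m+1}{2},
\qquad a_k=\sum_{P\in R({\mathcal F})}\sum_{Q|P}\deg Q,
\]
and then bounds $a_k$ on both sides: the trivial $a_k\ge |R({\mathcal F})|=q$ gives the upper bound on $g(E_k)$, while the estimate $a_k\le\tfrac{q-1}{2}d^k+1$ (using $e(Q|P)\ge 2$ for $P\in R({\mathcal F})\setminus\{P_0\}$ and the fundamental equality) gives the lower bound. So the factor-two gap you noticed comes from the factor-two uncertainty in $a_k$, not from discarding part of the different as you suggested. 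To repair your argument, either carry the full different $(m-1)(d^k+1)$ through Hurwitz, or---more cleanly---use Lemma \ref{lemGenus} for both inequalities and bound $a_k$.
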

\begin{proof} By \cite[Theorem 4.2.3 and Lemma 4.2.2]{wulf}, the tower ${\mathcal F}/{\mathbb F}_q$ has the following properties: 
 \begin{itemize}
 \item[(a)] the pole (resp. the zero) of $x_0$, say $P_\infty$ (resp.  $P_0$) 
splits completely  (resp. totally ramified) in ${\mathcal F}/{\mathbb F}_q$,
 \item[(b)] $\beta_1({\mathcal F})=\frac{2}{q-2}$,
 \item[(c)] for any $k\geq 1$, the genus
 \[g(F_k)=\bigg(\frac{q-2}{2}\bigg)d^k-\frac{1}{2}a_k +1\textrm{ with } \lim_{k\to \infty} \frac{a_k}{[F_k:F_0]}=0, \textrm{ where }\]
 \[a_k:=\sum_{P\in R({\mathcal F})}\sum_{\substack{Q\in \mathbb{P}(F_k)\\ Q|P}} \deg Q.\]
  \item[(d)] $R({\mathcal F})=\{(x_0-\alpha)\in \mathbb{P}(F_0)|  \; \alpha \in {\mathbb F}_q\}$.
\end{itemize}
By applying Proposition \ref{construction} with the set $S=\{P_\infty\}$ and $Q=P_0$, 
we obtain that the sequence ${\mathcal E}/{\mathbb F}_q=(E_k/{\mathbb F}_q)_{k\geq 0}$ is a tower with the following properties:
 \begin{itemize}
 \item[(*)] $[E:F_0]=[E_k:F_k]=\deg \varphi(T)=m$ for all $k\geq 0$, 
(observe that then $E/F$ and $F_k/F$ are linearly disjoint for all $k\geq 0$ ).
 \item[(**)]for each $i\in N$, the place $P_\infty$ has exactly one extension $Q_i$ in $E$ with  $\deg Q_i=i$
 and $Q_i$ splits completely in ${\mathcal E}$. 
\end{itemize}
By (a), Proposition \ref{construction}(i), and Abhyankar's Lemma\cite{stic2}, 
$P_0$ is totally ramified in $E_k$ for all $k\geq 0$. Hence, $E_k$ has an ${\mathbb F}_q$-rational place, 
namely the extension of $P_0$. Thus, $B_1(E_k)\geq 1$. The second part of (i) is clear by (*) and (**).\\
(ii) We will use Lemma \ref{lemGenus}. It is clear that $g(E)=0$. 
Let $P\in \mathcal{R}({\mathcal F})\setminus\{P_0\}$. By assumption $(\varphi(T),\varphi'(T))=1$ at $P$, 
and so $\varphi(T)$ has no multiple factors over the residue class field of $P$. 
Thus, by Kummer's Theorem \cite{stic2}, $P$ is unramified in $E$. 
By Abhyankar's Lemma \cite{stic2}, any extension of $P$ in $F_k$ is then 
unramified in $E_k$ for all $k\geq 1$. Thus, the  $s$ and $r(k)$ defined in  Lemma \ref{lemGenus} are as follows: 
\[s=r(k)=m-1 \textrm{ for all $k\geq1$}.\]
Now it follows from  Lemma \ref{lemGenus} and (c) that

\begin{equation}\label{gEk}
g(E_k)=\bigg(\frac{mq-m-1}{2}\bigg)d^k-\frac{m}{2}a_k+\frac{m+1}{2} 
\textrm{ with } \lim_{k\to \infty}\frac{a_k}{[E_k:E]}=0.
\end{equation}
Next, we want to estimate $a_k$. By (a), (d) and the fact that $e(Q|P)\geq 2$ and 
$\deg P=1$ for all $P\in R({\mathcal F})$, we obtain that
\begin{equation}\label{ak}
\begin{array}{lll}
a_k&=&\sum_{P\in R({\mathcal F})}\sum_{\substack{Q\in \mathbb{P}(F_k)\\ Q|P}} \deg Q=
\sum_{P\in R({\mathcal F})}\sum_{\substack{Q\in \mathbb{P}(F_k)\\ Q|P}} f(Q|P)\deg P\\
 &=&\sum_{P\in R({\mathcal F})\setminus \{P_0\}}\sum_{\substack{Q\in \mathbb{P}(F_k)\\ Q|P}} \deg Q+
 \sum_{\substack{Q\in \mathbb{P}(F_k)\\ Q|P_0}} f(Q|P_0)\deg P_0\\
 &=& \sum_{P\in R({\mathcal F})\setminus \{P_0\}}\sum_{\substack{Q\in \mathbb{P}(F_k)\\ Q|P}} f(Q|P)\deg P+1 \\
 &\leq& \sum_{P\in R({\mathcal F})\setminus \{P_0\}}\sum_{\substack{Q\in \mathbb{P}(F_k)\\ Q|P}} 2 f(Q|P)+1 \\
&\leq & \sum_{P\in R({\mathcal F})\setminus \{P_0\}}\sum_{\substack{Q\in \mathbb{P}(F_k)\\ Q|P}} e(Q|P) f(Q|P)+1 \\
&\leq & (\# R({\mathcal F})-1)[F_k:F_0]+1=(q-1)d^k+1,\\
& & \textrm{ by Fundamental Equality \cite{stic2}}. 
\end{array}
\end{equation}
Hence, by (\ref{ak}),
\begin{equation}\label{ak1}
a_k= \sum_{P\in R({\mathcal F})\setminus \{P_0\}}\sum_{\substack{Q\in \mathbb{P}(F_k)\\ Q|P}} f(Q|P)\deg P+1\leq \frac{(q-1)}{2}d^k+1 
\end{equation}
On the other hand as $f(Q|P)\geq 1$, by (\ref{ak}),
\begin{equation}\label{ak2}
a_k\geq (\#R({\mathcal F})-1)+1=q
\end{equation}
Therefore, by  (\ref{ak1}) and (\ref{ak2}), 
\begin{equation}\label{ak3}
q\leq a_k\leq \frac{(q-1)}{2}d^k+1.
\end{equation}
Now (ii) follows from (\ref{gEk}) and (\ref{ak3}). 
(iii) follows from (i) and (ii). 
\end{proof}

\begin{proposition}
The steps of the tower 
$\mathcal{E}/{\mathbb F}_{q}=(E_k/{\mathbb F}_{q})_{k\geq 0}$ 
defined in Proposition \ref{param1bis} have a class number such that
$$h(E_k/{\mathbb F}_{q})\geq h_{BRT1,k}$$ 
where
\begin{equation*}
\begin{split}
h_{BRT1,k}= 
\frac{(q-1)^2}{(g_k+1)(q+1)-1} 
\bigg(
\prod_{i\in N\cup \{1\}} 
 \left(\begin{array}{c}
 B_{i,k}+m_i\\
B_{i,k}
\end{array}\right)
 \\
 + 
q^{g_k-1} \prod_{i\in N \cup \{1\} } \left [ \left( \frac{q^{i}}{q^{i}-1}\right)^{B_{i,k}}
-  \right. \\ \left. B_{i,k}\left(
\begin{array}{c}
 B_{i,k}+m_i\\
 B_{i,k}
\end{array}
\right) \int_{0}^{\frac{1}{q^{i}}} \frac{(\frac{1}{q^{i}}-t)^{m_i}}{(1-t)^{B_{i,k}+m_i+1}}\rm dt. \right ]
\biggr)
\end{split}
\end{equation*}
 where $B_{i,k}= d^k$ for all $i\in N$,  
$g_k= \lceil (\frac{mq-m-2}{4})d^k+\frac{1}{2} \rceil$ and $\sum_{i\in N\cup\{1\}} im_i\leq g_k-2$ for all $k\geq 1$.
\end{proposition}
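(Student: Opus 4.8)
The plan is to derive $h_{BRT1,k}$ as a direct instance of the lower bound $h_{BRT}$ of Theorem \ref{mainmain}, applied to the function field $E_k/{\mathbb F}_q$ with the index sets $D_1=D_2=N\cup\{1\}$ and with $l_i=m_i$ for every $i\in N\cup\{1\}$. The hypotheses of Theorem \ref{mainmain} to be verified are: conditions (1)--(4), i.e. that each chosen degree lies in the admissible range and carries at least one place, which hold by Proposition \ref{param1bis}(i) since $B_i(E_k)\geq d^k\geq 1$ for $i\in N$ and $B_1(E_k)\geq 1$; and conditions (5)--(6), the size constraints $\sum_{i\in N\cup\{1\}}i\,m_i\leq g(E_k)-1$ and $\sum_{i\in N\cup\{1\}}i\,m_i\leq g(E_k)-2$, which will follow from the lower bound on $g(E_k)$ discussed below. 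Substituting these data into \eqref{HBRT}, and rewriting each bracketed factor through the identity $Q_{r,s}=\bigl(\tfrac{q^r}{q^r-1}\bigr)^{B_r}-B_r\binom{B_r+s}{B_r}\int_0^{1/q^r}\tfrac{(1/q^r-t)^s}{(1-t)^{B_r+s+1}}\,\mathrm dt$ of Proposition \ref{BoundsSigma2}, yields precisely the displayed expression for $h_{BRT1,k}$, once $g(E_k)$ has been replaced by $g_k$.

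Two monotonicity observations make the replacement of the exact (and unknown) invariants by the available lower bounds legitimate. First, for fixed $m_r$ the bracketed factor in \eqref{HBRT} is $Q_{r,m_r}=\sum_{k=0}^{m_r}q^{-rk}\binom{B_r+k-1}{k}$ by Proposition \ref{BoundsSigma2}, a sum each of whose summands is nondecreasing in $B_r$; likewise $\binom{B_r+l_r}{l_r}$ is nondecreasing in $B_r$, and in the prefactor $\tfrac{(q-1)^2}{(g+1)(q+1)-B_1}$ enlarging $B_1$ only increases the value. Hence replacing the true $B_i(E_k)$ by the smaller quantities $B_{i,k}=d^k$ for $i\in N$ (and, if $1\notin N$, $B_1(E_k)$ by $1$) can only decrease $h_{BRT}$, so the outcome remains a valid lower bound for $h(E_k/{\mathbb F}_q)$. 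Second, as already used in the proof of Proposition \ref{hparam2}, for a fixed admissible choice of the $m_i$ the quantity $h_{BRT}$ is increasing in the genus $g$: the factor $q^{g-1}$ outgrows the linear term $(g+1)(q+1)$ appearing in the denominator, and enlarging $g$ only relaxes the constraints (5)--(6). Since the genus is an integer, Proposition \ref{param1bis}(ii) gives $g(E_k)\geq\bigl\lceil\tfrac{mq-m-2}{4}d^k+\tfrac12\bigr\rceil=g_k$, so that $\sum_{i\in N\cup\{1\}}i\,m_i\leq g_k-2\leq g(E_k)-2$ secures admissibility of $(m_i)$ for $E_k$, and replacing $g(E_k)$ by $g_k$ in \eqref{HBRT} once more only decreases the bound.

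The single point genuinely requiring care is this second monotonicity statement, namely that lowering $g$ to the integer bound $g_k$ while holding the $m_i$ fixed does not increase $h_{BRT}$; this is the step I expect to be the main (mild) obstacle. It can be settled either by differentiating the expression $q^{g-1}\prod_{r}Q_{r,m_r}\big/\bigl((g+1)(q+1)-B_1\bigr)$ in $g$, or by a direct term-by-term comparison showing that for $q\geq 2$ and $g\geq 2$ the exponential factor dominates the linear correction in the denominator — exactly the elementary fact used implicitly in Propositions \ref{propohgastsur4et2}, \ref{hparam2}, \ref{hparam3} and \ref{hparam4}. With that in place, the proposition follows by assembling the three reductions above; the extra remark about specializing $l_i=m_i$ (rather than optimizing the two independent families) is justified, as in the earlier examples, by the fact that it causes no loss in the numerical estimates.
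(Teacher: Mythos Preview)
Your proposal is correct and follows essentially the same approach as the paper: apply Theorem \ref{mainmain} with $D_1=D_2=N\cup\{1\}$ and $l_i=m_i$, then replace the true invariants by the lower bounds of Proposition \ref{param1bis} via the monotonicity of $h_{BRT}$ in the $B_r$ and in the genus. You are more explicit than the paper about verifying the monotonicity in $g$ (which the paper simply asserts in the proof of Proposition \ref{hparam2}), but the argument is the same.
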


\begin{proof}
The bound $h_{BRT1,k}$ follows directly from Theorem \ref{mainmain} by taking 
$l_i=m_i$ for $i\in N\cup \{1\}$ and the lower bounds for the genus and for the number of places of each degree given by 
Proposition \ref{param1bis}.
\end{proof}

\medskip

{\bf Numerical estimations:} 

\medskip

Here are some effective numerical estimations of  the class numbers of some steps $E_k/{\mathbb F}_{q}$ of the tower 
$\mathcal{E}/{\mathbb F}_{q}$ given by Proposition \ref{param1bis} with $q=4$, $d=3$, and $N=\{2,3\}$.
These estimations are given by using Proposition \ref{param1bis}.
 
\medskip

\begin{center}
\begin{tabular}{|l|c|c|c|c|c|c|c|c|r|}
\hline
$q$  & step k & $g(E_{k})$ &  $B_{1,k}$  & $B_{2,k}$ & $B_{3,k}$ & $m_1$& $m_2$ & $m_3$ & $h_{BRT1,k}$   \\
\hline
4 &  2  &    30 &    1   &   9 &  9  &   7& 6 & 3 &  4.625820113117 $\times 10^{16}$\\
 \hline
4 &    3   &    89  &  1   &   27  &   27&  18 & 18 &  11&   2.23693900061$\times 10^{52}$ \\
 \hline     
 \end{tabular}
\end{center}

\medskip

\noindent {\bf c) Example 3}:
 
\begin{proposition} \label{fermat1} Let ${\mathcal F}/{\mathbb F}_9=(F_k/{\mathbb F}_9)_{k\geq 0}$ be the tower defined by the polynomial
\begin{equation}\label{lr.poly}
f(X,Y)=Y^2+(X+b)^2-1 \in {\mathbb F}_9[X,Y] \quad \textrm{ with } \quad b\in {\mathbb F}^*_l
\end{equation}
 and $F_0={\mathbb F}_9(x_0)$ be the rational function field. 
Let $E=F_0(z)$ with $z$ a root of the polynomial 
\[ \varphi(T)=(T^2+\mu^7)(T^9-T)-\frac{1}{x_0} \in F_0[T],\]
where  $\mu$ is a primitive element for ${\mathbb F}_9$. Then the sequence ${\mathcal E}/{\mathbb F}_9=(E_k/{\mathbb F}_9)_{k\geq 0}$, 
with $E_k=EF_k$, is a composite quadratic tower such that
\begin{itemize}
\item[(i)] $B_1(E_k)\geq 9\cdot 2^k$ and $B_2(E_k)\geq 2^k $ for all $k\geq 0$.
\item[(ii)] for all $k\geq 0$,
 \[g(E_k)=\begin{cases} 21\cdot 2^{k-1}-33\cdot 2^{(k-2)/2}+6 \;\textrm{ if $k\equiv 0\mod 2$}\\
                    21\cdot 2^{k-1}-11\cdot 2^{(k+1)/2}+6 \;\textrm{ if $k\equiv 1\mod 2$.}
\end{cases}\]
\item[(iii)] $ \beta_1({\mathcal E}/{\mathbb F}_9)=\frac{6}{7},\; \beta_2({\mathcal E}/{\mathbb F}_9)=\frac{2}{21}$ and $\beta_r({\mathcal E}/{\mathbb F}_9)=0$ for all  $r\neq 1,2$. 
\end{itemize}
\end{proposition}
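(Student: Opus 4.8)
The plan is to realize ${\mathcal E}/{\mathbb F}_9$ as the composite tower produced by Proposition \ref{construction} (in the form permitted by Remark \ref{rem.const}), applied to the base Fermat tower ${\mathcal F}/{\mathbb F}_9$ of (\ref{lr.poly}) and to the degree-$11$ extension $E$ of $F_0$ cut out by $\varphi$. First I would recall what is known about ${\mathcal F}/{\mathbb F}_9$: it is a tame quadratic tower ($[F_{k+1}:F_k]=2$ and $\gcd(2,3)=1$); its ramification locus $R({\mathcal F})$ is a finite set of rational places of $F_0$ containing the zero $P_0$ of $x_0$, and $P_0$ is totally ramified in every $F_k$; the pole $P_\infty$ of $x_0$ lies in $\operatorname{Split}({\mathcal F})$; the genus $g(F_k)$ is given by an explicit closed formula; and the tower is optimal over ${\mathbb F}_9$, with $\operatorname{Split}({\mathcal F})=\{P_\infty\}$, so that $B_1(F_k)=2^k+o(2^k)$ and $B_r(F_k)=o(2^k)$ for $r\geq 2$ (see \cite{hesttu} and the references therein). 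I would also record that $T^9-T=\prod_{\gamma\in{\mathbb F}_9}(T-\gamma)$ and that $T^2+\mu^7$ is irreducible over ${\mathbb F}_9$ (since $-\mu^7=\mu^3$ is a non-square, $\mu$ being a primitive element), so that $\varphi(T)=(T^2+\mu^7)\prod_{\gamma\in{\mathbb F}_9}(T-\gamma)-\tfrac{1}{x_0}$ is, up to its constant term, a product of nine pairwise distinct monic linear factors and one irreducible monic quadratic factor. Thus the data to feed into Proposition \ref{construction} are $F=F_0$, $F'=F_k$, the multiset $N$ with nine entries equal to $1$ and one equal to $2$ (so $m=\sum_{i\in N}i=11=\deg\varphi$), $Q=P_0$, $\alpha=\tfrac{1}{x_0}$ and $S=\{P_\infty\}$.

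For part (i) I would verify the hypotheses of Proposition \ref{construction}: (a) holds because at $P_\infty$ the residue field is ${\mathbb F}_9$, over which the factors $T-\gamma$ ($\gamma\in{\mathbb F}_9$) and $T^2+\mu^7$ are irreducible and pairwise distinct (Remark \ref{rem.const}); (b) holds because $\alpha(P_\infty)=(1/x_0)(P_\infty)=0$; (c) holds because $v_{P_0}(1/x_0)=-1$ is coprime to $m=11$, and because $P_0$ has in $F_k$ a rational extension of ramification index $2^k$, coprime to $11$. Hence ${\mathcal E}/{\mathbb F}_9=(E_k/{\mathbb F}_9)_{k\geq0}$ is a composite quadratic tower with $[E:F_0]=[E_k:F_k]=11$ and ${\mathbb F}_9$ algebraically closed in every $E_k$. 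By Proposition \ref{construction}(ii) the place $P_\infty$ has in $E$ exactly nine extensions of degree $1$ and one of degree $2$; since $P_\infty$ splits completely in every $F_k$, Proposition \ref{construction}(iii) gives that each of these splits completely in $E_k=EF_k$, whence $B_1(E_k)\geq 9[E_k:E]=9\cdot 2^k$ and $B_2(E_k)\geq [E_k:E]=2^k$, which is (i).

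For part (ii) I would first note that $\tfrac{1}{x_0}=(z^2+\mu^7)(z^9-z)$, so $x_0\in{\mathbb F}_9(z)$ and $E=F_0(z)={\mathbb F}_9(z)$ is rational, i.e.\ $g(E)=0$, and then apply Lemma \ref{lemGenus} to ${\mathcal E}$, with $m=11$, $g(E)=g(F_0)=0$ and $[E_k:E]=2^k$. The two ramification invariants of that lemma are obtained as follows. At each $P\in R({\mathcal F})\setminus\{P_0\}$ the reduction of $\varphi$ stays separable (this is precisely the genericity condition $(\varphi,\varphi')=1$ at those places), so by Kummer's theorem $P$ is unramified in $E$ and by Abhyankar's lemma unramified in every $E_k$, contributing nothing to $s$ nor to $r(k)$; at $P_0$, which is tamely ($\gcd(11,3)=1$) and totally ramified in $E$, the unique place $Q_0$ of $E$ over it has $d(Q_0|P_0)=m-1=10$ and $\deg Q_0=1$, so $s=10$; and over the unique rational place of $F_k$ above $P_0$, Abhyankar's lemma (using $\gcd(11,2^k)=1$) produces a single place of $E_k$ of ramification index $11$ and relative degree $1$, so $r(k)=1\cdot(11-1)\cdot1=10$ for all $k$. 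Substituting into Lemma \ref{lemGenus},
\[
g(E_k)=\Bigl(11-\tfrac{s+2}{2}\Bigr)2^k+11\bigl(g(F_k)-1\bigr)+\tfrac{r(k)}{2}+1=5\cdot 2^k+11\,g(F_k)-5,
\]
and inserting the closed formula for $g(F_k)$ recalled above yields, after separating the two parities of $k$, exactly the genus claimed in (ii). Part (iii) then follows: by (ii), $g(E_k)=21\cdot 2^{k-1}+O(2^{k/2})$, so (i) already gives $\beta_1({\mathcal E})\geq 6/7$ and $\beta_2({\mathcal E})\geq 2/21$, while the reverse inequalities and the vanishing of $\beta_r({\mathcal E})$ for $r\notin\{1,2\}$ follow from the optimality of ${\mathcal F}$ with $\operatorname{Split}({\mathcal F})=\{P_\infty\}$; indeed, degrees of places are multiplied by divisors of $11$ in $E_k/F_k$, so every place of $E_k$ of degree $1$ or $2$ lies, up to $o(2^k)$ of them, above one of the extensions of $P_\infty$ in $E$, giving $B_1(E_k)=9\cdot 2^k+o(2^k)$, $B_2(E_k)=2^k+o(2^k)$, $B_r(E_k)=o(2^k)$ otherwise, and division by $g(E_k)\sim 21\cdot 2^{k-1}$ produces the equalities.

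I expect the main obstacle to be part (ii): it relies on the exact genus of the base Fermat tower ${\mathcal F}/{\mathbb F}_9$, which I would have to quote, together with a careful bookkeeping of the ramification of $E/F_0$ and of $E_k/F_k$ along the fibre over $P_0$ — in particular the tameness of that ramification, the iterated use of Abhyankar's lemma via $\gcd(11,2^k)=1$, and the check that the remaining places of $R({\mathcal F})$ stay unramified in $E$ (the condition $(\varphi,\varphi')=1$ away from $P_0$). Part (iii) likewise depends on the fact that ${\mathcal F}/{\mathbb F}_9$ meets the Drinfeld--Vladut bound with exactly one completely split place, which I would take from the literature on that tower rather than re-prove.
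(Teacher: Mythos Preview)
Your proposal is correct and follows essentially the same route as the paper: apply Proposition~\ref{construction} (with Remark~\ref{rem.const}) to the pair $(F_0,F_k)$ using $Q=P_0$, $S=\{P_\infty\}$ and the multiset $N=\{1,\ldots,1,2\}$ to obtain the composite tower and (i); then apply Lemma~\ref{lemGenus} with $s=r(k)=10$ (only $P_0$ ramifies in $E$, tamely with $d=10$, while the other places of $R({\mathcal F})$ are unramified by the condition $(\varphi,\varphi')=1$ there) to obtain the genus formula $g(E_k)=5\cdot 2^k+11\,g(F_k)-5$, which combined with the known genus of the base Fermat tower gives (ii). The only visible difference is in (iii): the paper invokes \cite[Theorem~3.7(ii)]{hesttu} together with the maximality $\beta_1({\mathcal F}/{\mathbb F}_9)=2$, whereas you sketch the same conclusion by hand from the facts that $[E_k:F_k]=11$ forces residue degrees to divide $11$, and that optimality of ${\mathcal F}$ gives $B_1(F_k)=2^k+o(2^k)$ and $B_r(F_k)=o(2^k)$ for $r\ge 2$; this direct count is exactly what the cited theorem packages.
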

\begin{proof} By \cite[Theorem 3.11]{gast2} and \cite[Proposition 3.3.12]{tutd}, 
the tower ${\mathcal F}/{\mathbb F}_4$ has the following properties: 
\begin{itemize}
\item[(a)] $\mathcal{R}({\mathcal F})=\{P\in \mathbb{P}(F_0): x_0(P)=\alpha \textrm{ for some } \alpha \in {\mathbb F}_3\}$.
\item[(b)] The pole (resp. the zero) of $x_0$, say $P_\infty$ (resp. $P_0$), 
splits completely (resp. is totally ramified) in ${\mathcal F}/{\mathbb F}_q$. 
\item[(c)] For all $k\geq 0$, 
\begin{eqnarray*}
g(F_k)=\begin{cases} 2^{k-1}-3\cdot 2^{(k-2)/2}+1 \qquad  \textrm{ if } k\equiv 0 \mod 2\\
                     2^{k-1}-2^{(k+1)/2}+1 \qquad \quad \; \textrm{ if } k\equiv 1 \mod 2,
        \end{cases}
\end{eqnarray*}
\item[(d)] ${\mathcal F}/{\mathbb F}_9$ is maximal with $\beta_1({\mathcal F}/{\mathbb F}_9)=2$.
\end{itemize}
By applying Proposition \ref{construction} with the place $Q=P_0$ and $S=\{P_\infty\}$, 
we obtain that the sequence ${\mathcal E}/{\mathbb F}_9$ is a quadratic  tower and the assertion (i) holds. \\
(ii) We will again apply Lemma \ref{lemGenus}. One can easily check that $(\varphi(T), \varphi'(T))=1$ 
at the places $P\in \mathbb{P}(F)$ with $x_0(P)=\alpha$ for some $\alpha \in {\mathbb F}_3^*$. Hence,  $\varphi(T)$  
has no multiple factor over the residue class field of these places. Thus, 
it follows from Kummer's Theorem \cite{stic2}  that these places are unramified in $E$. 
Then by Dedekind's Different Theorem \cite{stic2},  the $s$ in Lemma \ref{lemGenus} is 
\begin{equation}\label{delta.last}
s=10 \quad \textrm{ where $Q_0$ is the extension of $P_0$ in $E$.}
\end{equation}
By using Abhyankar's Lemma \cite{stic2}, we have that all 
$P_k\in \mathbb{P}(F_k)$ with $P_k\cap F\in \{P\in \mathbb{P}(F): x_0(P)\in {\mathbb F}_3^*\}$ 
are unramified in $E_k$. Moreover, since $P_0$ is totally ramified in both extensions 
$E/F_0$ and $F_k/F_0$, again by the same lemma, we have that any extension of $P_0$ in $F_k$ is 
totally ramified in $E_k$. Hence,
\begin{equation} \label{rn.last}
r(k)=10 \textrm{ for all $k\geq 1$}.
\end{equation}
Now by applying Lemma \ref{lemGenus}, (\ref{delta.last}), (\ref{rn.last}), and (c), 
we obtain the desired result.\\
(iii) follows from (i), (ii), (d) and \cite[Theorem 3.7(ii)]{hesttu}. 
\end{proof}

\begin{proposition}\label{hfermat1}
The steps of the tower 
$\mathcal{E}/{\mathbb F}_{9}=(E_k/{\mathbb F}_{9})_{k\geq 0}$ 
defined in Proposition \ref{fermat1} have a class number such that
$$h(E_k/{\mathbb F}_{9})\geq h_{BRT1,k}$$ 
where
\begin{equation*}
\begin{split}
h_{BRT1,k}= 
\frac{64}{10(g_k+1)-9.2^k} 
\bigg(
\prod_{i\in \{1,2\}} 
 \left(\begin{array}{c}
 B_{i,k}+m_i\\
B_{i,k}
\end{array}\right)
 \\
 + 
9^{g(E_k)-1} \prod_{i\in \{1,2\} } \left [ \left( \frac{9^{i}}{9^{i}-1}\right)^{B_{i,k}}
-  \right. \\ \left. B_{i,k}\left(
\begin{array}{c}
 B_{i,k}+m_i\\
 B_{i,k}
\end{array}
\right) \int_{0}^{\frac{1}{9^{i}}} \frac{(\frac{1}{9^{i}}-t)^{m_i}}{(1-t)^{B_{i,k}+m_i+1}}\rm dt. \right ]
\biggr)
\end{split}
\end{equation*}
 where $B_{1,k}= 9.2^k$, $B_{2,k}= 2^k$, and $m_1+2m_2\leq g(E_k)-2$ for all $k\geq 1$.

\end{proposition}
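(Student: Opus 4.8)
The plan is to derive this bound as a direct specialization of Theorem~\ref{mainmain}, exactly in the spirit of the proofs of Propositions~\ref{hparam2}, \ref{hparam3} and \ref{hparam4}. Concretely, I would apply Theorem~\ref{mainmain} over the ground field $\mathbb{F}_q$ with $q=9$, taking $D_1=D_2=\{1,2\}$ and $l_i=m_i$ for $i=1,2$; the specialization $l_i=m_i$ is harmless in practice, as observed in the remark following Proposition~\ref{propohgastsur4et2}. The hypotheses of Theorem~\ref{mainmain} are straightforward to check for every $k\geq 1$: by Proposition~\ref{fermat1}(ii) the genus $g(E_k)$ is known explicitly and satisfies $g(E_k)\geq g(E_1)=5\geq 4$, so $\{1,2\}\subseteq\{1,\dots,g(E_k)-1\}$ and $\{1,2\}\subseteq\{1,\dots,g(E_k)-2\}$; and by Proposition~\ref{fermat1}(i) we have $B_1(E_k)\geq 9\cdot 2^k\geq 1$ and $B_2(E_k)\geq 2^k\geq 1$. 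The condition $m_1+2m_2\leq g(E_k)-2$ in the statement is precisely $\sum_{r\in D_2}rm_r\leq g(E_k)-2$, and it already implies $\sum_{r\in D_1}rl_r=m_1+2m_2\leq g(E_k)-1$, so both constraints of Theorem~\ref{mainmain} are met.

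With these choices, the quantity $h_{BRT}$ of \eqref{HBRT} becomes
\[
\frac{(q-1)^2}{(g+1)(q+1)-B_1}\left(\prod_{i\in\{1,2\}}\binom{B_i+m_i}{B_i}+q^{g-1}\prod_{i\in\{1,2\}}Q_{i,m_i}\right),
\]
where $Q_{i,m_i}$ is then rewritten by means of the closed form of Proposition~\ref{BoundsSigma2}. Putting $q=9$ gives $(q-1)^2=64$, $q^{g-1}=9^{\,g(E_k)-1}$, $q^i=9^i$, and $(g+1)(q+1)-B_1=10\bigl(g(E_k)+1\bigr)-B_1(E_k)$, so up to the value inserted for $B_1$ and $B_2$ this is exactly the displayed expression for $h_{BRT1,k}$.

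To replace the true values $B_1(E_k),B_2(E_k)$ by the lower bounds $9\cdot 2^k$ and $2^k$, I would invoke the monotonicity already used in the proof of Proposition~\ref{propohgastsur4et2}: with $g(E_k)$ fixed, each $\binom{B_r+m_r}{B_r}$ and each partial sum $Q_{r,m_r}$ is nondecreasing in $B_r$, so $\Sigma_{1,inf}$ and $\Sigma_{2,inf}$ are nondecreasing in $B_1$ and $B_2$, and the prefactor $\tfrac{(q-1)^2}{(g+1)(q+1)-B_1}$ is nondecreasing in $B_1$ as well; its denominator stays strictly positive along the way because $(g+1)(q+1)-B_1\geq (q-1)^2R(F/\mathbb{F}_q)>0$ by \eqref{R2}. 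Hence lowering $B_1(E_k)$ to $9\cdot 2^k$ and $B_2(E_k)$ to $2^k$ can only decrease the right-hand side, which yields $h(E_k/\mathbb{F}_9)\geq h_{BRT1,k}$. I do not expect a genuine obstacle here; the only points needing a little care are pinning down the range of $k$ (the explicit genus must be at least $4$, forcing $k\geq 1$) and recording the monotonicity that makes the substitution of lower bounds for $B_1$ and $B_2$ legitimate.
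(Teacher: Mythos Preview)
Your proposal is correct and follows exactly the same approach as the paper: the paper's proof is the one-liner ``The bound $h_{BRT1,k}$ follows directly from Theorem~\ref{mainmain} by taking $l_i=m_i$ for $i\in\{1,2\}$ and the lower bounds for the number of places of each degree given by Proposition~\ref{fermat1}.'' You have simply made explicit the verification of the hypotheses (in particular $g(E_k)\geq 4$ for $k\geq 1$) and the monotonicity in $B_1,B_2$ that justifies replacing the true place counts by their lower bounds, both of which the paper leaves implicit here (though the latter is spelled out in the proof of Proposition~\ref{propohgastsur4et2}).
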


\begin{proof}
The bound $h_{BRT1,k}$ follows directly from Theorem \ref{mainmain} by taking 
$l_i=m_i$ for $i\in\{1,2\}$ and the lower bounds for the number of places of each degree given by 
Proposition \ref{fermat1}.
\end{proof}

\medskip

{\bf Numerical estimations:} 

\medskip

Here are some effective numerical estimations of  the class numbers of some steps $E_k/{\mathbb F}_{q}$ of the tower 
$\mathcal{E}/{\mathbb F}_{q}$ given by Proposition \ref{fermat1}. 
These estimations are given by using Proposition \ref{hfermat1}.

\medskip

\begin{center}
\begin{tabular}{|l|c|c|c|c|c|c|c|c|r|}
\hline
$q$  & step k & $g_k$ &  $B_{1,k}$  & $B_{2,k}$ & $m_1$ & $m_2$ & $h_{BRT1,k}$   \\
\hline
9 &  2  &    15 &     36 &  4  &   11&   1& 8.56396791978 $\times 10^{14}$   \\
 \hline
9 &    3   &   46   &   72  &   8  &  34 &   5& 7.47053482878  $\times 10^{45}$  \\
 \hline
 \end{tabular}      
\end{center}   

\medskip

\noindent {\bf d) Example 4}:

\begin{proposition}\label{fermat2} Let ${\mathcal F}/{\mathbb F}_4=(F_k/{\mathbb F}_4)_{k\geq 0}$ be the tower defined by the polynomial
\begin{equation}\label{lr.poly2}
f(X,Y)=Y^3+(X+1)^3-1 \in {\mathbb F}_4[X,Y]
\end{equation}
 and $F_0={\mathbb F}_4(x_0)$ be the rational function field. 
Let $E=F_0(z)$ with $z$ a root of the polynomial 
              \[ \varphi(T)=(T^4-T)(T^2+\mu T+\mu)-\frac{1}{x_0}\in F_0[T],\]
where $\mu$ is a primitive element for ${\mathbb F}_4$. 

Then the sequence ${\mathcal E}/{\mathbb F}_4=(E_k/{\mathbb F}_4)_{k\geq 0}$, with  $E_k=EF_k$, is a composite tower such that
\begin{itemize}
 \item[(i)] $B_1(E_k)\geq 4\cdot3^k$  and  $B_2(E_k)\geq 3^k$for all $k\geq 0$,
 \item[(ii)] $g(E_k)=\begin{cases} 
                         \frac{13}{2}\cdot3^{k}-4\cdot 3^{(k+2)/2}+\frac{5}{2} \quad \; \textrm{ if } k\equiv 0 \mod 2\\
                         \frac{13}{2}\cdot 3^{k} -2\cdot3^{(k+3)/2}+ \frac{5}{2}\quad  \;\textrm{ if } k\equiv 1 \mod 2. 
                         \end{cases}$
\item[(iii)] $\beta_1({\mathcal E}/{\mathbb F}_4)=\frac{8}{13}$, $\beta_2({\mathcal E}/{\mathbb F}_4)=\frac{2}{13}$ and $\beta_r({\mathcal E}/{\mathbb F}_4)=0$ for all $r\neq 1,2$.
\end{itemize}
\end{proposition}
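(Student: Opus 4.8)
The plan is to run the argument of Proposition~\ref{fermat1}, with the quadratic base tower over ${\mathbb F}_9$ replaced by the cubic Fermat(--Wulftange) base tower over ${\mathbb F}_4$ recursively defined by $Y^3+(X+1)^3-1$, and with the degree-$11$ extension replaced by the degree-$6$ extension given by $\varphi(T)=(T^4-T)(T^2+\mu T+\mu)-1/x_0$.

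First I would record the structural facts about ${\mathcal F}/{\mathbb F}_4$ available from \cite{gast2},\cite{tutd},\cite{wulf} (this is a tower of the type appearing in Proposition~\ref{param1bis}): $\mathcal{R}({\mathcal F})$ is finite and consists of rational places $P$ with $x_0(P)\in{\mathbb F}_4$; the pole $P_\infty$ of $x_0$ splits completely in every $F_k$ while the zero $P_0$ is totally ramified; $[F_k:F_0]=3^k$; ${\mathcal F}/{\mathbb F}_4$ is maximal with $\beta_1({\mathcal F}/{\mathbb F}_4)=1$; and there is an explicit two-parity formula for $g(F_k)$. Over ${\mathbb F}_4$ one has $T^4-T=T(T+1)(T+\mu)(T+\mu^2)$ while $T^2+\mu T+\mu$ is irreducible, so at $P_\infty$ the polynomial $(T^4-T)(T^2+\mu T+\mu)$ reduces to a product of four distinct monic linear factors and one monic irreducible quadratic. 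Taking $\alpha=1/x_0$, $S=\{P_\infty\}$, $Q=P_0$ and $N=\{1,1,1,1,2\}$ (so $m=\sum_{i\in N}i=6$), I would invoke the construction of Proposition~\ref{construction} — in the general form of \cite{hesttu}, which is what is needed here since $\gcd(m,[F_k:F_0])=3\neq1$ — together with Remark~\ref{rem.const} for the repeated linear factors. This yields that $\varphi(T)$ is irreducible over $F_0$, that $E/F_0$ and $F_k/F_0$ are linearly disjoint with $[E_k:F_k]=6$, that ${\mathcal E}/{\mathbb F}_4$ is a composite tower, that $P_0$ stays totally ramified in every $E_k$, and that the four rational and the one quadratic extensions of $P_\infty$ in $E$ split completely in ${\mathcal E}$. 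Since $[E_k:E]=[F_k:F_0]=3^k$, this gives $B_1(E_k)\geq4\cdot3^k$ and $B_2(E_k)\geq3^k$, which is (i).

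For the genus (ii), observe that $E={\mathbb F}_4(z)$ is rational, since $x_0=1/g(z)$ is a rational function of $z$ with $g(T)=(T^4-T)(T^2+\mu T+\mu)$; hence $g(E)=0=g(F_0)$. The ramification of $E/F_0$ is transparent: the place $P_0$ is totally and \emph{wildly} ramified, as $e=6$ is even in characteristic $2$, while the only remaining ramification of $E/F_0$ lies over the single degree-two place of $F_0$ below the critical points of $g$, where $g'(T)=\mu(T^2)^2+T^2+\mu$ is $\mu$ times the square of an irreducible quadratic, so the ramification there is tame with $e=3$; by Kummer's theorem the places of $\mathcal{R}({\mathcal F})$ other than $P_0$ are unramified in $E$ (one checks $(\varphi,\varphi')=1$ at them), and by Abhyankar's theorem they remain unramified in $E_k/F_k$, whereas the extension of $P_0$ in $F_k$ becomes wildly ramified of index $2$ in $E_k$. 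I would then compute $g(E_k)$ from the Hurwitz genus formula for $E_k/F_k$, as in the proof of Proposition~\ref{quadratic1}: the different of $E_k/F_k$ decomposes into a part coming from below (governed by $g(F_k)$ via the base tower), a tame part over the extensions in $F_k$ of the degree-two place below the critical points of $g$, and a wild part over the extension of $P_0$; after evaluating the wild different exponent once and inserting $g(E)=g(F_0)=0$, $m=6$, $[E_k:E]=3^k$ and the two-parity formula for $g(F_k)$, the two claimed closed forms for $g(E_k)$ fall out. Finally, (iii) follows at once from (i), (ii), the maximality of ${\mathcal F}/{\mathbb F}_4$ and \cite[Theorem 3.7(ii)]{hesttu}, exactly as at the end of the proof of Proposition~\ref{fermat1}.

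The main difficulty is the wild ramification above $P_0$: in characteristic $2$ the index $m=6$ is even, so neither Proposition~\ref{construction} nor Lemma~\ref{lemGenus} can be quoted verbatim, and the different exponent is strictly larger than $m-1$. One must pin it down through the higher ramification filtration — or, more cheaply, via the Hurwitz formula for the rational extension $E/F_0$ once the tame different coming from $g'$ has been separated off. Getting this wild exponent right, and tracking how it interacts with the tame ramification in $E_k/F_k$, is exactly what produces the coefficients $\tfrac{13}{2}$, $4$, $2$, $\tfrac{5}{2}$ and the parity split; the rest is routine bookkeeping parallel to Propositions~\ref{fermat1} and \ref{quadratic1}.
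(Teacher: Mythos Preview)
Your overall shape is right, but there is a concrete error in locating the tame ramification of $E/F_0$, and it propagates into the genus computation. You correctly factor $g'(T)=\mu(T^2+\mu T+1)^2$, so the critical locus in $E={\mathbb F}_4(z)$ is the single degree-$2$ place given by $T^2+\mu T+1$. However, the place of $F_0$ below it is \emph{not} a degree-$2$ place: one checks directly (and in fact $g(T)+1=(T^2+\mu T+1)^3$ in ${\mathbb F}_4[T]$) that the critical value is $g(\alpha)=1\in{\mathbb F}_4$, so the tame ramification sits over the rational place $P_1=(x_0=1)$, with $e=3$, $f=2$. In particular $P_1\in\mathcal{R}({\mathcal F})$, and your claim that $(\varphi,\varphi')=1$ at all places of $\mathcal{R}({\mathcal F})\setminus\{P_0\}$ is false precisely at $P_1$.

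This matters for $\operatorname{Diff}(E_k/F_k)$. Because $P_1$ lies in $\mathcal{R}({\mathcal F})$ and is in fact totally ramified in $F_k$ (index $3^k$, cf.\ \cite[Lemma~3.3.9]{tutd}), Abhyankar gives $e(Q_k|P_k)=1$ over $P_1$: the tame $e=3$ from $E/F_0$ is absorbed and contributes \emph{nothing} to $\operatorname{Diff}(E_k/F_k)$. Your outline instead carries a ``tame part over the extensions in $F_k$ of the degree-two place'', which does not exist and would give the wrong genus. Relatedly, $P_0$ is \emph{not} totally ramified in $E_k$: Abhyankar yields $e(Q_k|P_0)=\operatorname{lcm}(6,3^k)=2\cdot 3^k$, so $e(Q_k|P_k)=2$ and there are three places of $E_k$ above $P_0$; the paper then gets $d(Q_k|P_k)=3^{k-1}+1$ by transitivity of the different (using $d(Q|P_0)=6$, obtained exactly as you suggest from Hurwitz for the rational extension $E/F_0$), and hence $g(E_k)=6g(F_k)+\tfrac{3^k}{2}-\tfrac{7}{2}$. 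Finally, for linear disjointness the paper does not rely on Proposition~\ref{construction} (whose coprimality hypothesis fails here) but argues directly via the place $(x_0=\mu^2)$, which is unramified in $E$ and totally ramified in $F_k$; you should supply an argument of this kind rather than appeal to an unspecified ``general form'' in \cite{hesttu}.
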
 

\begin{proof} 
By \cite[Theorem 3.11]{gast2} and \cite[Proposition 3.3.12]{tutd}, 
the tower ${\mathcal F}/{\mathbb F}_4$ has the following properties: 
\begin{itemize}
\item[(a)] $\mathcal{R}({\mathcal F})=\{P\in \mathbb{P}(F_0): x_0(P)=\alpha \textrm{ for some } \alpha \in {\mathbb F}_4\}$.
\item[(b)] The pole (resp. the zero) of $x_0$, say $P_\infty$ (resp. $P_0$), 
splits completely (resp. is totally ramified) in ${\mathcal F}/{\mathbb F}_q$. 
\item[(c)] For all $k\geq 1$, 
\begin{eqnarray*}
g(F_k)=\begin{cases} 3^k-2\cdot 3^{k/2}+1 \qquad  \textrm{ if } k\equiv 0 \mod 2\\
                     3^k-3^{(k+1)/2}+1 \qquad  \textrm{if } k\equiv 1 \mod 2.
        \end{cases}
\end{eqnarray*}
\item[(d)] The tower ${\mathcal F}/{\mathbb F}_4$ is maximal with $\beta_1({\mathcal F}/{\mathbb F}_4)=1$.
\end{itemize}
By applying Kummer's Theorem \cite{stic2} and  Proposition \ref{construction} with 
the set $S=\{P_\infty\}$ and $Q=P_0$, we obtain the following: 
\begin{itemize}
\item[(1)] $[E:F_0]=\deg \varphi(T)=6$ for all $k\geq 0$, 
\item[(2)] $P_0$ is totally ramified in $E$, and so ${\mathbb F}_4$ is algebraically closed in $E$,
\item[(3)] $P_\infty$ has four rational extensions and one extension of 
degree two in $E$, and they all split completely in $E_k$ for all $k\geq 1$,
\item[(4)] Since $(\varphi(T),\varphi'(T))=1$ at the places $P_{\mu}=(x_0=\mu)$ 
and $P_{\mu^2}=(x_0=\mu^2)$, these places are unramified in $E$.
\item[(5)] $P_1=(x_0=1)$ has exactly one extension which has degree $2$ and ramification index $3$ (by using Magma). 
\end{itemize}
By (3), $E_k/{\mathbb F}_4$ has an ${\mathbb F}_4$-rational place, and so ${\mathbb F}_4$ is algebraically closed in $E_k$ for all $k\geq 0$. 
By \cite[Lemma 3.3.9(i)]{tutd}, the place $(x_0=\mu^2)$ is totally ramified in $F_k$. 
Let $Q$ be an extension of $(x_0=\mu^2)$ in $E$. Then using (4) and  Abhyankar's Lemma \cite{stic2} 
gives that $Q$ has an extension, say $Q'$ in $E_k$ with the ramification index such that
\[[E_k:E]\geq e(Q'|Q)=[F_k:F_0]\geq [E_k:E].\]
Thus, $[E_k:E]=[F_k:F_0]$, i.e.,  $E_k/F_0$ and $F_k/F_0$ are linearly disjoint. 
Consequently, the sequence ${\mathcal E}/{\mathbb F}_4$ is a composite tower. \\
(i) follows from (3): 
\begin{equation}\label{split12}
  B_1(E_k)\geq 4\cdot [E_k:E]=4\cdot3^k \textrm{ and  $B_2(E_k)\geq [E_k:E]=3^k$ for all $k\geq 0$.}
\end{equation}
(ii)  We will use the Hurwitz Genus Formula \cite{stic} for the extension $E_k/F_k$ for any $k\geq 1$. 
It is clear that $g(E)=0$. By applying the  Hurwitz 
Genus Formula for the extension $E/F_0$, we get that the degree of the different of $E/F_0$ is 
 \begin{equation}\label{diff}
  \deg \operatorname{Diff}(E/F_0)=10.
\end{equation} 
  Let $Q\in \mathbb{P}(E)$. By using (2), (5), and Dedekind's Different Formula \cite{stic2},  we have that
\begin{itemize}
\item[(*)]  if $Q|P_1$, then $d(Q|P_1)=e(Q|P_1)-1=2$. 
\item[(**)] If $Q|P_0$, then $d(Q|P_0)>e(Q|P_0)-1=5$. Moreover, by (*), (5), and (\ref{diff}), 
we have that  $6\geq d(Q|P_0)$. Hence, $d(Q|P_0)=6$.
\end{itemize}  
Notice that  (\ref{diff}), (*), and  (**) then imply that only $P_0$ and $P_1$ are ramified in $E/F_0$. 
Now let $P_k$ be a place of $F_k$, $P:=P_k\cap F_0$ and $Q_k$ be an extension of $P_k$ in $E_k$. Set $Q:=Q_k\cap E$. Then clearly $Q|P$. 
 By Abhyankar's Lemma \cite{stic}, if $e(Q_k|P_k)>1$, then $e(Q|P)>1$.  Hence, it is enough to consider  just the places $P_0$ and $P_1$. First suppose that $P:=P_1$. By \cite[Lemma 3.3.9]{hesttu}, $P$ is totally  ramified in $F_k$. Thus, using (5) and Abhyankar's lemma, we get that $e(Q_k|P_k)=1$. 
 
 Now suppose that $P:=P_0$. By (b), (2) and Abhyankar's Lemma, we have that 
 \begin{equation}\label{indices}
 e(Q_k|P_k)=2 \quad \textrm{ and } \quad e(Q_k|Q)=3^{k-1}.
 \end{equation}
Thus, by Dedekind's Different Theorem \cite{stic}
\begin{equation}\label{dif.}
d(Q_k|Q)=e(Q_k|Q)-1=3^{k-1}-1 \quad \textrm{ and }\quad d(P_k|P)=e(P_k|P)-1=3^k-1.
\end{equation}
Now using (**), (\ref{indices}), (\ref{dif.}) 
and applying Transitivity of Different \cite{stic} in 
$F_0\subseteq E\subseteq E_k$ and $F_0\subseteq F_k\subseteq E_k$, we obtain that
$$e(Q_k|P_k)d(P_k|P)+d(Q_k|P_k)=2(3^k-1)+d(Q_k|P_k)$$
$$= e(Q_k|Q)d(Q|P)+d(Q_k|Q)=6\cdot 3^{k-1}+3^{k-1}-1.$$
Hence,
\begin{equation}\label{dif.2}
d(Q_k|P_k)=3^{k-1}+1.
\end{equation}
Now using (\ref{indices}), (\ref{dif.2}), and the Hurwitz Genus Formula for the extension $E_k/F_k$, we obtain that
\begin{eqnarray*}
2g(E_k)-2&=&6(2g(F_k)-2)+\deg \operatorname{Diff}(E_k/F_k))\\
         &=&6(2g(F_k)-2)+\sum_{Q_k|P_k} d(Q_k|P_k)\deg (Q_k)\\
         &=&6(2g(F_k)-2)+\sum_{Q_k|P_k} (3^{k-1}+1)f(Q_k|P_k)\\
         &=&6(2g(F_k)-2)+ 3\cdot (3^{k-1}+1) \quad \textrm{ by Fundamental Equality \cite{stic}}\\
         &=& 12 g(F_k)-12+3^k+3.
\end{eqnarray*}
Hence,
\begin{equation}
g(E_k)=6g(F_k)+\frac{3^{k}}{2}-\frac{7}{2}.
\end{equation}
 Now, combining this with (c) yields the desired result for $g(E_k)$ .\\
 (iii) follows from (i), (ii), (d) and \cite[Theorem 3.7(ii)]{hesttu}. 
\end{proof}

\begin{proposition}\label{hfermat2}
The steps of the tower 
$\mathcal{E}/{\mathbb F}_{4}=(E_k/{\mathbb F}_{4})_{k\geq 0}$ 
defined in Proposition \ref{fermat1} have a class number such that
$$h(E_k/{\mathbb F}_{4})\geq h_{BRT1,k}$$ 
where
\begin{equation*}
\begin{split}
h_{BRT1,k}= 
\frac{9}{5(g_k+1)-4.3^k} 
\bigg(
\prod_{i\in \{1,2\}} 
 \left(\begin{array}{c}
 B_{i,k}+m_i\\
B_{i,k}
\end{array}\right)
 \\
 + 
4^{g(E_k)-1} \prod_{i\in \{1,2\} } \left [ \left( \frac{4^{i}}{4^{i}-1}\right)^{B_{i,k}}
-  \right. \\ \left. B_{i,k}\left(
\begin{array}{c}
 B_{i,k}+m_i\\
 B_{i,k}
\end{array}
\right) \int_{0}^{\frac{1}{4^{i}}} \frac{(\frac{1}{4^{i}}-t)^{m_i}}{(1-t)^{B_{i,k}+m_i+1}}\rm dt. \right ]
\biggr)
\end{split}
\end{equation*}
 where $B_{1,k}= 4.3^k$, $B_{2,k}= 3^k$, and $m_1+2m_2\leq g(E_k)-2$ for all $k\geq 1$.

\end{proposition}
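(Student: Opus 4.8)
The plan is to obtain this estimate as a direct specialization of Theorem~\ref{mainmain}, in the same spirit as the proofs of Propositions~\ref{propohgastsur4et2}, \ref{hparam2}, \ref{hparam3}, \ref{hparam4} and \ref{hfermat1}. First I would recall from Proposition~\ref{fermat2} that $\mathcal{E}/{\mathbb F}_4$ is a genuine tower of function fields, so that ${\mathbb F}_4$ is the full constant field of every step $E_k$ and the genus $g_k:=g(E_k)$ is known exactly, and that $B_1(E_k)\geq 4\cdot 3^k\geq 1$ and $B_2(E_k)\geq 3^k\geq 1$ for all $k\geq 1$. From the closed form of $g(E_k)$ one checks $g_k\geq 4$ for all $k\geq 1$, so the hypotheses of Theorem~\ref{mainmain} are satisfied with $q=4$, with $D_1=D_2=\{1,2\}$ (contained in $\{1,\dots,g_k-1\}$ and $\{1,\dots,g_k-2\}$ respectively), and with any admissible data $l=\{l_1,l_2\}$, $m=\{m_1,m_2\}$; in particular the choice $l_i=m_i$ subject to $m_1+2m_2\leq g_k-2$ is admissible.

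Applying Theorem~\ref{mainmain} then yields $h(E_k/{\mathbb F}_4)\geq h_{BRT}$, where $h_{BRT}$ is the quantity~(\ref{HBRT}) with $q$ replaced by $4$: the prefactor becomes $\frac{9}{5(g_k+1)-B_1(E_k)}$, and the degree-$r$ factors involve $\left(\frac{4^r}{4^r-1}\right)^{B_r(E_k)}$ and the integral over $[0,1/4^r]$ for $r=1,2$. It then remains only to eliminate the unknown exact values $B_1(E_k)$ and $B_2(E_k)$ in favour of the explicit lower bounds $B_{1,k}=4\cdot 3^k$ and $B_{2,k}=3^k$. For this I would invoke the monotonicity already used in Section~\ref{preli}: the lower estimates $\Sigma_{1,inf}$ (option~(a)) and $\Sigma_{2,inf}$ (option~(c)) of Theorem~\ref{MainTheo}, equivalently the closed form of $Q_{r,s}$ given in Proposition~\ref{BoundsSigma2}, are nondecreasing in each $B_r$, while the prefactor $\frac{(q-1)^2}{(g_k+1)(q+1)-B_1}$ is nondecreasing in $B_1$ because its denominator stays positive (indeed $5(g_k+1)>q+1+2g_k\sqrt{q}\geq B_1(E_k)$ by Weil's bound). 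Hence substituting the smaller values $B_{1,k}$ and $B_{2,k}$ throughout can only decrease the right-hand side, and what is left is exactly $h_{BRT1,k}$; this proves $h(E_k/{\mathbb F}_4)\geq h_{BRT1,k}$.

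I do not expect a serious obstacle here. Since Proposition~\ref{fermat2}(ii) supplies the genus exactly, there is no need --- unlike in the proof of Proposition~\ref{hparam2} --- to argue in addition that $h_{BRT}$ is monotone in $g$, and positivity of the denominator $5(g_k+1)-4\cdot 3^k$ is immediate from the closed form of $g_k$. The one point that calls for a little care is the direction of the monotonicity in the variable $B_1$, since $B_1$ occurs in the denominator of the prefactor; but because the bound~(\ref{R2}) on $R(\mathcal{E}/{\mathbb F}_4)$ is expressed in terms of the true value $B_1(E_k)$, passing to the smaller $B_{1,k}$ genuinely weakens (rather than strengthens) the estimate, so the chain of inequalities runs in the right direction.
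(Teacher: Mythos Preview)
Your argument is correct and follows exactly the paper's approach: specialize Theorem~\ref{mainmain} with $q=4$, $D_1=D_2=\{1,2\}$, $l_i=m_i$, and replace the unknown $B_i(E_k)$ by the explicit lower bounds from Proposition~\ref{fermat2}. Your extra care about monotonicity in $B_r$ and positivity of the denominator only makes explicit what the paper leaves implicit, and you were right to pull the data from Proposition~\ref{fermat2} rather than the misreferenced Proposition~\ref{fermat1}.
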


\begin{proof}
The bound $h_{BRT1,k}$ follows directly from Theorem \ref{mainmain} by taking 
$l_i=m_i$ for $i\in\{1,2\}$ and the lower bounds for the number of places of each degree given by 
Proposition \ref{fermat2}.
\end{proof}

\medskip

{\bf Numerical estimations:} 

\medskip

Here are some effective numerical estimations of  the class numbers of some steps $E_k/{\mathbb F}_{q}$ of the tower 
$\mathcal{E}/{\mathbb F}_{q}$ given by Proposition \ref{fermat2}. 
These estimations are given by using Proposition \ref{hfermat2}.

\medskip

\begin{center}
\begin{tabular}{|l|c|c|c|c|c|c|c|c|r|}
\hline
$q$  & step k & $g_k$ &  $B_{1,k}$  & $B_{2,k}$ & $m_1$ & $m_2$ & $h_{BRT1,k}$   \\
\hline
4 &  2  &    25 &     36 &  9  &   19&   2& 1.41572226696 $\times 10^{18}$   \\
 \hline
4 &    3   &   124   &   108  &   27  &  90 &   16& 3.50178913096  $\times 10^{86}$  \\
 \hline
 \end{tabular}      
\end{center}   
 
\medskip   
        
\subsection{infinitely many}

\begin{proposition}\label{infiniplace}
Let ${\mathbb F}_q$ be a finite field of characteristic $p$ and $m\in \mathbb{N}$ with $(m,p)=1$ and $m\geq 2$. 
Then there is a tower $\mathcal{F}/{\mathbb F}_q=(F_k/{\mathbb F}_q)_{k\geq 0}$ and a strictly increasing 
sequence $(n_i)_{i\geq 0}$ of positive integers with $n_0=1$ such that
\begin{itemize}
\item[(i)] $B_{n_i}(F_k)\geq p^{k-i}$ for all $0\leq i\leq k$,
\item[(ii)] $g(F_k)\geq \frac{m-1}{2} (p^k-1)$ for all $k\geq 0$.
\end{itemize}
\end{proposition}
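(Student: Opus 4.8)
The plan is to build the tower by hand, as an infinite chain of degree-$p$ Artin--Schreier steps, choosing the defining functions one at a time so as to, on the one hand, keep the ramification concentrated at a single rational place (which forces the genus to grow at least at the stated rate) and, on the other hand, manufacture at every level a place of a new degree that then splits completely in the rest of the tower.

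First I would set $F_0={\mathbb F}_q(x_0)$ and, recursively, $F_{k+1}=F_k(x_{k+1})$ with $x_{k+1}^p-x_{k+1}=f_k(x_k)$, where $f_k\in{\mathbb F}_q[T]$ is a polynomial divisible by $T^p-T$, of degree $\geq m$ and prime to $p$ (one can always raise the degree by multiplying by a power of $T-1$). An easy induction shows that $x_k$ has a unique pole $\infty_k$, lying over the pole of $x_0$, that $\infty_k$ is totally ramified in $F_{k+1}/F_k$ with different exponent $(p-1)(M_k+1)$ for some $M_k\geq m$ prime to $p$, and that $f_k(x_k)\notin\wp(F_k)$ because its pole order at $\infty_k$ is prime to $p$, so each step really has degree $p$. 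Plugging $\deg\operatorname{Diff}(F_{k+1}/F_k)\geq(p-1)(m+1)$ into the Hurwitz genus formula and using $g(F_0)=0$ gives, by induction, $g(F_k)\geq\frac{m-1}{2}(p^k-1)$; in particular $g(F_k)\to\infty$, so $\mathcal F$ is indeed a tower, and assertion (ii) is done.

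For assertion (i) I would follow places through the tower. Since $T^p-T\mid f_k$ for all $k$, every element of ${\mathbb F}_p$ is a zero of every $f_k$, so the place $(x_0=0)$ splits completely at each step, its extensions to $F_k$ being the $p^k$ rational places $(x_0=0,x_1=j_1,\dots,x_k=j_k)$ with all $j_\ell\in{\mathbb F}_p$; thus $B_1(F_k)\geq p^k$ and $n_0=1$ works. For $i\geq1$, having already fixed $f_0,\dots,f_{i-1}$, I would choose a closed point $\alpha_i$ of $F_0$ of a suitably large degree $e_i$ that is \emph{inert} in each of $F_1/F_0,\dots,F_i/F_{i-1}$ — equivalently, for which the absolute traces $\operatorname{Tr}_{{\mathbb F}_{q^{e_ip^j}}/{\mathbb F}_p}\bigl(f_j(\gamma^{(j)})\bigr)$ are nonzero for $0\leq j\leq i-1$, where $\gamma^{(j)}$ is the $x_j$-value at the unique extension of $\alpha_i$ to $F_j$. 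Then $\alpha_i$ has a single extension $Q_i$ in $F_i$, of degree $n_i:=e_ip^i$, and only now would I pick $f_i$ (still divisible by $T^p-T$, of degree $\geq m$ prime to $p$) with the extra requirement that $f_i$ vanish at $x_i(Q_i)$. This forces $Q_i$ to split completely in $F_{i+1}/F_i$ into places whose $x_{i+1}$-value lies in ${\mathbb F}_p$, and since every later $f_\ell$ again vanishes on ${\mathbb F}_p$, $Q_i$ splits completely in $F_k/F_i$ for all $k\geq i$, keeping degree $n_i$; hence $B_{n_i}(F_k)\geq[F_k:F_i]=p^{k-i}$. Taking the $e_i$ nondecreasing makes $n_0<n_1<n_2<\cdots$.

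The crux, and the only point requiring more than bookkeeping, is the existence at each stage of the inert seed $\alpha_i$ of large degree: the (already constructed) degree-$p^i$ extension $F_i/F_0$ must have closed points of $F_0$ of arbitrarily large degree that are totally inert in it. I expect to get this either from a Weil-type count — the degree-$d$ places of $F_0$ that split, rather than stay inert, at the $j$-th step form a set whose size admits a bound that, summed over $j<i$, is beaten for $d$ large by the total $\sim q^d/d$ of degree-$d$ places — or, more structurally, from Chebotarev's density theorem for the Galois closure of $F_i/F_0$ (a $p$-group extension), once one checks that its ``totally inert in $F_i$'' conjugacy class is non-empty, which is where the freedom in the earlier choices of the $f_j$ is used. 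The remaining verifications (the genus recursion, the splitting bookkeeping, and the well-definedness of the trace conditions despite the ${\mathbb F}_p$-ambiguity in the $x_j$-values, since each $f_j$ has ${\mathbb F}_q$-coefficients and the ambiguous values form a single Frobenius orbit) are routine.
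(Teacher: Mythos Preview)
Your overall strategy --- an Artin--Schreier tower with the defining equations chosen step by step so that a distinguished pole carries the ramification and a growing list of places splits completely --- matches the paper's, and your treatment of (ii) is fine. But your route to (i) is needlessly complicated, and the complication creates the very gap you flag as ``the crux.''

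The paper does not look for an inert seed in $F_0$ at all. At stage $k$ it simply picks a place $P_k$ of $F_k$ (not of $F_0$) of any degree $n_k>n_{k-1}$; such places exist for all sufficiently large $n_k$ in any function field over a finite field. It then uses the Weak Approximation Theorem to find an element $z_k\in F_k$ (not a polynomial in $x_k$) with $z_k(P)=0$ for every $P$ in the current splitting set $S_k$ (which now includes $P_k$) and $v_{Q_k}(z_k)=-m$ at the distinguished pole. The extension $F_{k+1}=F_k(x_{k+1})$, $x_{k+1}^p-x_{k+1}=z_k$, then splits every $P\in S_k$ completely by Kummer's theorem, and the induction proceeds.

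By insisting instead that your new place $Q_i$ arise as the unique extension of a totally inert place of the rational field $F_0$, you are forced to show that such places exist in an already-constructed iterated Artin--Schreier tower --- equivalently, that the Galois closure $L/F_0$ contains an element $\sigma$ with $\mathrm{Gal}(L/F_0)=\langle\sigma\rangle\cdot\mathrm{Gal}(L/F_i)$. This is a nontrivial structural statement about the $p$-group $\mathrm{Gal}(L/F_0)$ that you do not prove; and your remark that ``the freedom in the earlier choices of the $f_j$ is used'' is circular, since those $f_j$ are fixed before you seek $\alpha_i$. The paper sidesteps this entirely by choosing $P_i$ in $F_i$ directly. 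Even within your polynomial framework you could do the same: take any $Q_i\in\mathbb{P}(F_i)$ of degree $n_i>n_{i-1}$ and make $f_i$ divisible both by $T^p-T$ and by the minimal polynomial of $x_i(Q_i)$ over ${\mathbb F}_q$; then $Q_i$ splits in $F_{i+1}$ with $x_{i+1}$-values in ${\mathbb F}_p$, and your ``$T^p-T\mid f_\ell$'' mechanism propagates the splitting thereafter.
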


\begin{proof} (i) The proof is similar to that of \cite[Lemma 3.17]{hesttu}.   Let $F_0={\mathbb F}_q(x_0)$ 
be the rational function field. Set 
\[ \textrm{ $Q_0=(x_0=\infty)$, $S_0=\{P_0\}$ where $P_0=(x_0=0)$, and $n_=1$.}\]
Choose an element $z_0\in F_0$ with the following properties:
\[z_0(P)=0 \textrm{ for } P\in S_0 \textrm{ and } v_{Q_0}(z_0)=-m.\]
Note that by the Weak Approximation Theorem \cite{stic2} such an element $z_0$ always exists. 
Let $F_1=F_0(x_1)$ where $x_1$ satisfies the equation 
\[x_1^p-x_1=z_0.\]
Then $Q_0$ is totally ramified in $F_1$, and hence ${\mathbb F}_q$ is algebraically closed in $F_1$. 
Denote by $Q_1$ the place of $F_1$ lying above $Q_0$. Then by Artin-Schreier Extension Theorem \cite{stic2}, 
\begin{equation*}
d(Q_1|Q_0)=(m+1)(p-1).
\end{equation*}
Set 
\[S_1=\{P\in \mathbb{P}(F_1): P\cap F_0\in S_0\}\cup \{P_1\},\]
where $P_1 \in \mathbb{P}(F_1)$ having $\deg P_1=n_1$ for some $n_1>1$. 
Next, choose an element $z_1 \in F_1$ such that 
\[z_1(P)=0 \textrm{ for all } P\in S_1 \textrm{ and } v_{Q_1}(z_1)=-m \]
Let $F_2=F_1(x_2)$ where $x_2$ satisfies the equation
\[x_2^p-x_2=z_1.\]
Then $Q_1$ is totally ramified in $F_2$, and so ${\mathbb F}_q$ is algebraically closed in $F_2$. 
Moreover, again by Artin-Schreier Extension Theorem \cite{stic2},
\begin{equation*}
d(Q_2|Q_1)=(m+1)(p-1),
\end{equation*}
where $Q_2\in \mathbb{P}(F_2)$ lies above $Q_1$. We continue on this process inductively for $k\geq 2$. Set
\[S_{k-1}=\{P\in \mathbb{P}(F_{k-1}): P\cap F_{k-2} \in S_{k-2} \} \cup \{P_{k-1}\}\]
where $P_{k-1} \in \mathbb{P}(F_{k-1})$ of $\deg P_{k-1}=n_{k-1}$ for some $n_{k-1}>n_{k-2}$. 
Choose an element $z_{k-1} \in F_{k-1}$ such that the following hold:
\[ z_{k-1}(P)=0 \textrm{ for all } P\in S_{k-1} \textrm{ and } v_{Q_{k-1}}(z_{k-1})=-m \]
where $Q_{k-1} \in \mathbb{P}(F_{k-1})$ lies above $Q_{k-2}$. 
Then $Q_{k-1}$ is totally ramified in $F_k$, and hence ${\mathbb F}_q$ is algebraically closed in $F_k$. Moreover, 
\begin{equation}\label{d3}
d(Q_k|Q_{k-1})=(m+1)(p-1) \textrm{ where $Q_k\in \mathbb{P}(F_k)$ lies above $Q_{k-1}$.} 
\end{equation}
Now assuming that (ii) holds, we have  that $g(F_k)\to \infty$ as $k\to \infty$. 
Now it follows from the construction of $F_k$, for $k\geq 0$, that the 
sequence ${\mathcal F}/{\mathbb F}_q=(F_k/{\mathbb F}_q)_{k\geq 0}$ is a tower with $[F_k:F_{k-1}]=p$ 
for all $k\geq 0$. Moreover, for all $i\geq 0$, by Kummer's Theorem \cite{stic2}, 
each place $P\in S_i$ splits completely in $F_k$, for any $k\geq i$. Thus, for all $i\geq 0$, we obtain that
\[B_{n_i}(F_k)\geq [F_k:F_i]=p^{k-i}.\]
(ii) We prove by induction. For $k=0$, it is trivial. The rest of the proof follows by using  
(\ref{d3}) and  the Hurwitz Genus Formula.
\end{proof}

\begin{proposition}
Let ${\mathbb F}_q$ be a finite field of characteristic $p$ and $m\in \mathbb{N}$ with $(m,p)=1$ and $m\geq 2$. 
Then there is a tower $\mathcal{F}/{\mathbb F}_q=(F_k/{\mathbb F}_q)_{k\geq 0}$ and a strictly increasing 
sequence $(n_i)_{i\geq 0}$ of positive integers with $n_0=1$ such that
the steps of this tower 
$\mathcal{F}/{\mathbb F}_{q}=(F_k/{\mathbb F}_{q})_{k\geq 0}$ 
have a class number such that
$$h(F_k/{\mathbb F}_{q})\geq h_{BRT1,k}$$ 
where
\begin{equation}\label{HBRT1}
\begin{split}
h_{BRT1,k}= 
\frac{(q-1)^2}{(g_k+1)(q+1)-p^k} 
\left(
\prod_{i=0}^{k} \left(\begin{array}{c}
 p^{k-i}+l_i\\
l_i
\end{array}\right) \right .
 \\
\left . +q^{g_k-1}
\prod_{i=0}^{k}\left [ \left( \frac{q^{n_i}}{q^{n_i}-1}\right)^{p^{k-i}}
-p^{k-i}\left(
\begin{array}{c}
 p^{k-i}+m_i\\
 p^{k-i}
\end{array}
\right) \int_{0}^{\frac{1}{q^{n_i}}} \frac{(\frac{1}{q^{n_i}}-t)^{m_i}}{(1-t)^{p^{k-i}+m_i+1}}\rm dt. \right ]
\right).
\end{split}
\end{equation}

where $g_k= \frac{m-1}{2} (p^k-1)$ and $\sum_{i=0}^{k}n_im_{i}\leq g_k-2$ for all $k\geq 0$.
\end{proposition}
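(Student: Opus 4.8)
The plan is to derive this statement by combining the tower construction of Proposition~\ref{infiniplace} with the general lower bound of Theorem~\ref{mainmain}, and then by replacing the various quantities by their guaranteed lower bounds. First I would invoke Proposition~\ref{infiniplace} to fix a tower $\mathcal F/{\mathbb F}_q=(F_k/{\mathbb F}_q)_{k\geq 0}$ and a strictly increasing sequence $(n_i)_{i\geq 0}$ with $n_0=1$ such that $B_{n_i}(F_k)\geq p^{k-i}$ for all $0\leq i\leq k$ and $g(F_k)\geq g_k:=\frac{m-1}{2}(p^k-1)$; taking $i=0$ this in particular gives $B_1(F_k)\geq p^k$. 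From now on I fix a step $k$ with $g_k\geq 2$ (so that the hypothesis $g\geq 2$ of Theorem~\ref{mainmain} holds; the remaining finitely many steps are trivial since $h\geq 1$) together with nonnegative integers $m_i$ (and $l_i$, taken equal to $m_i$ in the displayed formula) with $\sum_{i=0}^k n_i m_i\leq g_k-2$.

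Next I would apply Theorem~\ref{mainmain} to $F_k/{\mathbb F}_q$ with $D_1=\{n_i : l_i\geq 1,\ 0\leq i\leq k\}$ and $D_2=\{n_i : m_i\geq 1,\ 0\leq i\leq k\}$, keeping the exponents $l_{n_i}=l_i$ and $m_{n_i}=m_i$. The indices with vanishing exponent are dropped because $\binom{B_{n_i}}{0}=1$ and $Q_{n_i,0}=1$, so they contribute trivial factors to the two products and dropping them changes nothing, while it guarantees that every degree kept in $D_1$ (resp.\ $D_2$) is at most $g_k-2\leq g(F_k)-1$ (resp.\ $g(F_k)-2$), since for such an $i$ one has $n_i\leq n_i m_i\leq\sum_j n_j m_j\leq g_k-2$. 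The remaining hypotheses of Theorem~\ref{mainmain} are then immediate: the $n_i$ are pairwise distinct because the sequence is strictly increasing; $B_{n_i}(F_k)\geq p^{k-i}\geq 1$; and $\sum_{r\in D_1}r\,l_r=\sum_{i=0}^k n_i l_i\leq g_k-1\leq g(F_k)-1$, $\sum_{r\in D_2}r\,m_r=\sum_{i=0}^k n_i m_i\leq g_k-2\leq g(F_k)-2$. Theorem~\ref{mainmain} then yields $h(F_k)\geq h_{BRT}$, where $h_{BRT}$ is the expression~(\ref{HBRT}) evaluated at the true genus $g(F_k)$ and the true counts $B_{n_i}(F_k)$; reinserting the dropped trivial factors, the two products may be rewritten over all $0\leq i\leq k$, which already puts $h_{BRT}$ in the shape of~(\ref{HBRT1}) except that $g(F_k)$ and the $B_{n_i}(F_k)$ appear in place of $g_k$ and $p^{k-i}$.

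The last step is to descend from these true values to the lower bounds occurring in $h_{BRT1,k}$. Replacing each $B_{n_i}(F_k)$ by $p^{k-i}$ only decreases the right-hand side, since $\Sigma_{1,inf}=\prod_i\binom{B_{n_i}+l_i}{l_i}$ and $\Sigma_{2,inf}=q^{g-1}\prod_i Q_{n_i,m_i}$ are nondecreasing in each $B_{n_i}$ (more places means more effective divisors, i.e.\ larger binomial terms) and the prefactor $\frac{(q-1)^2}{(g+1)(q+1)-B_1}$ is nondecreasing in $B_1=B_{n_0}$, its denominator remaining positive. Replacing $g(F_k)$ by $g_k$ requires more care, and this is the one point I expect to be the real obstacle: one must check that $g\mapsto h_{BRT}(g)$ is nondecreasing on $[g_k,g(F_k)]$, equivalently that the exponential factor $q^{g-1}$ in the numerator outgrows the linear term $(g+1)(q+1)-p^k$ in the denominator for $g\geq g_k$. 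Since $g_k=\frac{m-1}{2}(p^k-1)$ is large (and $\Sigma_{1,inf}/\Sigma_{2,inf}$ is tiny for such $g$), a short elementary estimate gives this; it is exactly the monotonicity in the genus already invoked in the proof of Proposition~\ref{hparam2}. Carrying out both substitutions yields $h(F_k)\geq h_{BRT1,k}$ with $h_{BRT1,k}$ given by~(\ref{HBRT1}), as required.
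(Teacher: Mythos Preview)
Your proposal is correct and follows precisely the approach the paper uses for all the analogous class-number propositions (Propositions~\ref{hparam2}, \ref{hparam3}, \ref{hparam4}, \ref{hfermat1}, \ref{hfermat2}): invoke the relevant tower proposition (here Proposition~\ref{infiniplace}) for the place-count and genus bounds, plug these into Theorem~\ref{mainmain}, and descend to the stated lower bounds via monotonicity. In fact the paper gives no proof at all for this particular proposition; comparing with the one-line proofs of the parallel results, your argument is exactly the intended one, only spelled out in considerably more detail (your handling of the indices with $m_i=0$ and of the range conditions $n_i\leq g_k-2$ is more careful than anything the paper writes, and your remark on genus monotonicity matches verbatim the unproved assertion ``since the functions $h_{BRT1,k}$ and $h_{BRT2,k}$ with respect to the genus are increasing'' in the proof of Proposition~\ref{hparam2}).
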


\section{Annexe}
The following table gives the best current bounds according to
the known information on the number of places and the size of the genus.
Recall that the bounds $h_{LMD}$, $h_{BR}$, $h_{AHL}$  and $h_{BRT}$
are respectively defined by (\ref{HLMD}), (\ref{minh}), (\ref{HAHL}) and (\ref{HBRT}).

\medskip

\begin{center}
\begin{tabular}{||c|c|l||}
\hline \hline
Known information & case & reference \\
\hline \hline
& & \\
None specific information & & \phantom{i} $h_{LMD}$ \, \cite{lamd}\\
& & \\
\hline
One degree $r$ with known $B_r$ & 
\begin{tabular}{c}
\\
 $r=1$ \\
\\
\\
\\
$r > 1$ 
\end{tabular}
& 
\begin{tabular}{l}
\\
$h_{AHL}$ \, \cite{auhala1} (low $g$\\ 
and $B_1\geq (\sqrt{q}-1)g+1$)\\
$h_{BR}$ \, \cite{baro5} (else)\\
\\
$h_{BRT}$
\end{tabular}\\
& & \\
\hline
& & \\
Several degrees $r_i$ with known $B_{r_i}$ & & \phantom{i} $h_{BRT}$\\
& & \\
\hline \hline
\end{tabular}
\end{center}

\medskip

In case we only know the number of places of degree one,
as noted previously $h_{BR}=h_{BRT}$ (see Remark \ref{comparaisontheorique}). 
In this case, for very low values of $g$ and only if the number of rational points $B_1$ is 
such that $B_1\geq (\sqrt{q}-1)g+1$, bound $h_{AHL}$ is well suited else for almost values of $g$,
bound $h_{BR}$ is better.
In case we only know the number of places of degree $r>1$, 
the bound $h_{BRT}$ is better than $h_{BR}$.
In case of several distinct degrees, bound $h_{BRT}$ gives the best results.

\section{Acknowledgments}
Seher Tutdere is partially supported by T\"UB\.ITAK under Grant No. $TBAG-109T672$.

%\bibliographystyle{plain}
%\bibliography{stdlib_sbrr}
\end{document}